\newcommand{\Mrm}{{\rm M}}
\newcommand{\Pb}{\mathbb{P}}
\newcommand{\GL}{{\rm GL}}
\newcommand{\Vcal}{\mathcal{V}}
\newcommand{\Hcal}{\mathcal{H}}
\newcommand{\Ubb}{\mathbb{U}}
\newcommand{\PC}{\mathscr{PC}}
\newcommand{\proE}{{\rm proE}}
\newcommand{\proHS}{{\rm proHS}}
\newcommand{\BW}{{\rm BW}}
\newcommand{\Wass}{{\rm W}}
\newcommand{\Xbf}{\mathbf{X}}
\newcommand{\Ybf}{\mathbf{Y}}
\newcommand{\mean}{\mathrm{mean}}
\newcommand{\cov}{\mathrm{cov}}
\newcommand{\mapto}{\ensuremath{\rightarrow}}
\newcommand{\approach}{\ensuremath{\rightarrow}}
\newcommand{\imply}{\ensuremath{\Rightarrow}}
\newcommand{\equivalent}{\ensuremath{\Longleftrightarrow}}
\newcommand{\X}{\mathcal{X}}
\newcommand{\N}{\mathbb{N}}
\newcommand{\R}{\mathbb{R}}
\newcommand{\bP}{\mathbb{P}}
\newcommand{\M}{\mathcal{M}}
\newcommand{\la}{\langle}
\newcommand{\ra}{\rangle}
\renewcommand{\H}{\mathcal{H}}
\renewcommand{\L}{\mathcal{L}}
\newcommand{\Mcal}{\mathcal{M}}
\newcommand{\Lcal}{\mathcal{L}}
\def\cov{{\rm cov}}
\def\trace{{\rm tr}}
\newcommand\HS{{\rm HS}}
\newcommand\eHS{{\rm HS_X}}
\def\Sym{{\rm Sym}}
\def\logHS{{\rm logHS}}
\def\logE{{\rm logE}}
\def\tr{{\rm tr}}
\def\Tr{{\rm Tr}}
\renewcommand{\b}{\mathbf{b}}
\def\1{\mathbf{1}}
\newcommand{\compose}{\circ}
\newcommand{\Ncal}{\mathcal{N}}
\begin{document}
\title{Alpha Procrustes metrics between positive definite operators: a unifying formulation for the Bures-Wasserstein and Log-Euclidean/Log-Hilbert-Schmidt metrics}
%
\titlerunning{Unifying Wasserstein and Log-Euclidean/Log-Hilbert-Schmidt metrics}
%
\author{H\`a Quang Minh\orcidID{0000-0003-3926-8875}}
\authorrunning{H\`a Quang Minh}
%
\institute{RIKEN Center for Advanced Intelligence Project, Tokyo, JAPAN
\\
\email{minh.haquang@riken.jp}}
%
\maketitle              
\begin{abstract}
This work presents a parametrized family of distances, namely the Alpha Procrustes distances, on the set of symmetric, positive definite 
(SPD) matrices. The Alpha Procrustes distances provide a unified formulation encompassing both the Bures-Wasserstein
and Log-Euclidean distances between SPD matrices. 
We show that the Alpha Procrustes distances are the Riemannian distances corresponding
to a family of Riemannian metrics
on the manifold of SPD matrices, which encompass both the Log-Euclidean and Wasserstein Riemannian metrics.
This formulation is then generalized to the set of positive definite Hilbert-Schmidt operators
on a Hilbert space, unifying the infinite-dimensional Bures-Wasserstein and Log-Hilbert-Schmidt distances.
In the setting of reproducing kernel Hilbert spaces (RKHS) covariance operators, we obtain
closed form formulas for all the distances via the corresponding kernel Gram matrices.
From a statistical viewpoint, the Alpha Procrustes distances give rise to a parametrized family of distances between Gaussian measures on Euclidean space, in the finite-dimensional case, and
separable Hilbert spaces, in the infinite-dimensional case, encompassing the 2-Wasserstein distance, with closed form formulas via Gram matrices in the RKHS setting.
The presented formulations are new both in the finite and infinite-dimensional settings.

\keywords{Procrustes distance \and Wasserstein distance \and Log-Euclidean distance \and Log-Hilbert-Schmidt distance \and positive definite matrices \and positive definite operators \and covariance operators \and Gaussian measures \and reproducing kernel Hilbert spaces}
\end{abstract}
\section{Introduction and motivations}

The main purpose of the current work is to provide a unified formulation linking two important distances on the set of 
symmetric, positive definite (SPD) matrices, namely the Bures-Wasserstein and Log-Euclidean distances, along 
with their infinite-dimensional generalizations on the set of positive definite Hilbert-Schmidt operators on an infinite-dimensional Hilbert space. 
As a consequence of this formulation, we also obtain a parametrized family of distances generalizing the $\L^2$-Wasserstein distance between  Gaussian measures on Euclidean and 
infinite-dimensional Hilbert spaces.

Let $\Sym^{+}(n)$ denote the set of $n \times n$ real, symmetric, positive semi-definite matrices and 
$\Sym^{++}(n) \subset \Sym^{+}(n) $ denote the set of
symmetric positive definite (SPD) matrices. Let $\Ubb(n)$ denote the set of $n \times n$ unitary 
matrices. 
In the context of optimal transport theory \cite{Villani:2008Optimal},
the Bures-Wasserstein distance on $\Sym^{+}(n)$
arises as follows. 
Let $\mu_X \sim \Ncal(m_1, A)$ and $\mu_Y \sim \Ncal(m_2, B)$ be two Gaussian probability distributions on $\R^n$. Let
$\Gamma(\mu_X, \mu_Y)$ be the set of all probability distributions on $\R^n \times \R^n$ whose marginal 
distributions are $\mu_X$ and $\mu_Y$. It was proved \cite{Downson:1982,Olkin:1982,Gelbrich:1990Wasserstein,Givens:1984} that the following is a squared distance,
the so-called $\L^2$-Wasserstein distance, between $\mu_X$ and $\mu_Y$
\begin{align}
\label{equation:L2-Wasserstein}
d^2_{\Wass}(\mu_X, \mu_Y) &= \inf_{\mu \in \Gamma(\mu_X, \mu_Y)}\int_{\R^n \times \R^n}||x-y||^2d\mu(x,y)
\nonumber
\\
&= ||m_1 - m_2||^2 + \trace[A+B -2 (A^{1/2}BA^{1/2})^{1/2}].
\end{align}
For $m_1 = m_2 = 0$, we obtain the Bures-Wasserstein distance on $\Sym^{+}(n)$
\begin{align}
\label{equation:Bures-finite}
d_{\BW}(A,B) = \left(\trace[A+B -2 (A^{1/2}BA^{1/2})^{1/2}]\right)^{1/2}.
\end{align}
From the viewpoint of Procrustes distances \cite{Bhatia:2018Bures}, \cite{Masarotto:2018Procrustes}, $d_{\BW}$ is obtained via the following optimization problem
\begin{align}
\label{equation:Bures-optimize-finite}
d_{\BW}(A,B) = \min_{U \in \Ubb(n)}||A^{1/2}U - B^{1/2}V||_F = \min_{U \in \Ubb(n)}||A^{1/2} - B^{1/2}U||_F,
\end{align}
where $||\;||_F$ is the Frobenius norm. Both the optimal transport and Procrustes distance formulations remain valid
in the infinite-dimensional settings, where $\mu_X, \mu_Y$ are two Gaussian measures on a Hilbert space $\H$,
$A,B$ are two covariance operators on $\H$ \cite{Gelbrich:1990Wasserstein,Masarotto:2018Procrustes}, with $||\;||_F$ replaced by the Hilbert-Schmidt norm $||\;||_{\HS}$.

\begin{remark}
Since the  eigenvalues $\{\lambda_k\}_{k=1}^n$ of $A^{1/2}BA^{1/2}$, which are all non-negative, are the same as those of $AB$, we can also write
\begin{align}
\label{equation:equivalent}
\trace[(A^{1/2}BA^{1/2})^{1/2}] = \sum_{k=1}^n\lambda_k^{1/2} = \trace(AB)^{1/2}.
\end{align}
This is the expression found in e.g. \cite{Downson:1982}.
Subsequently, we also make use of the expression $\trace(AB)^{1/2}$ in the sense of Eq.(\ref{equation:equivalent}).
If $A \in \Sym^{++}(n)$ (or $B \in \Sym^{++}(n))$, then $(AB)^{1/2}$ is also well-defined analytically and is given by
\begin{align}
(AB)^{1/2} = A^{1/2}(A^{1/2}BA^{1/2})^{1/2}A^{-1/2}. 
\end{align}
\end{remark}

The Log-Euclidean distance, on the other hand, is the Riemannian distance associated with the bi-invariant Riemannian metric
on $\Sym^{++}(n)$ \cite{LogEuclidean:SIAM2007}, considered as a Lie group under the commutative multiplication
$A \odot B = \exp(\log(A) + \log(B))$, where $\log$ denotes the principal matrix logarithm. It is given by 
\begin{align}
\label{equation:logE}
d_{\logE}(A,B) = ||\log(A) - \log(B)||_F.
\end{align}

{\bf Contributions of this work}. While the two distances given in Eqs.(\ref{equation:Bures-finite}) and (\ref{equation:logE}) appear quite different and unrelated, we show that
\begin{enumerate}
	\item By generalizing the Procrustes distance optimization problem in Eq.(\ref{equation:Bures-optimize-finite}), 
	we obtain a parametrized family of distances on $\Sym^{++}(n)$ that includes both
	the Bures-Wasserstein and Log-Euclidean distances as special cases. We call this family {\it Alpha Procrustes distances}.
	
	\item We show that the Alpha Procrustes distances are in fact the Riemannian distances
	corresponding to a family of Riemannian metrics on $\Sym^{++}(n)$, which include
	the Log-Euclidean Riemannian metric in \cite{LogEuclidean:SIAM2007} and the Wasserstein Riemannian metric \cite{Takatsu2011wasserstein,Bhatia:2018Bures,Malago:WassersteinGaussian2018}
	in as special cases.

	\item The Alpha Procrustes distances are then generalized from $\Sym^{++}(n)$ to the set
	of positive definite unitized Hilbert-Schmidt operators on an infinite-dimensional Hilbert space $\H$.
	This setting is more general than the setting of covariance operators on $\H$, which form a strict subset of
	the set of Hilbert-Schmidt operators on $\H$ when $\dim(\H) = \infty$.
	In particular, we recover the infinite-dimensional Bures-Wasserstein and Log-Hilbert-Schmidt distances
	\cite{MinhSB:NIPS2014} as special cases. In the setting of reproducing kernel Hilbert spaces (RKHS), we obtain closed form formulas for
	all the distances between RKHS covariance operators, via the corresponding kernel Gram matrices.
	
	\item As a statistical consequence of the Alpha Procrustes distance formulation, we obtain a parametrized family of distances between Gaussian measures
	on Euclidean and Hilbert spaces that include the $\L^2$-Wasserstein distance as a special case. 
	In the RKHS setting, we obtain closed form formulas for all the distances, via the corresponding kernel Gram matrices.
	
\end{enumerate}

An extended abstract containing some of the main results in this paper, without proofs, 
has appeared in the proceedings of the conference Geometric Science of Information 2019 
\cite{Minh:GSI2019}.

\section{Finite-dimensional setting}
\label{section:finite-dim}
	
We start with the sets $\Sym^{+}(n)$ and $\Sym^{++}(n)$ of $n \times n$ real positive semi-definite and positive definite matrices, respectively. The Procrustes distance formulation in
Eq.(\ref{equation:Bures-optimize-finite}) can be generalized as follows.

\begin{definition}
	[\textbf{Alpha Procrustes distance - finite-dimensional version}]
	\label{definition:alpha-procrustes-finite}
	Let $\alpha \in \R, \alpha \neq 0$ be fixed.
	The $\alpha$-Procrustes distance between two matrices $A, B \in \Sym^{++}(n)$  is defined to be
	\begin{align}
	d^{\alpha}_{\proE}(A,B) = \min_{U,V \in \Ubb(n)}\left\|\frac{A^{\alpha}U - B^{\alpha}V}{\alpha}\right\|_F = \min_{U \in \Ubb(n)}\left\|\frac{A^{\alpha} - B^{\alpha}U}{\alpha}\right\|_F.
	\end{align}
	For $\alpha > 0$, we define this distance over the larger set $\Sym^{+}(n)$.
\end{definition}


\begin{theorem}
	[\textbf{Explicit expression}]
	Let either (i) $A, B \in \Sym^{++}(n)$, $\alpha \in \R, \alpha \neq 0$, or (ii) $A, B \in \Sym^{+}(n), \alpha \in \R, \alpha > 0$. Then
	\begin{align}
	\label{equation:dproE-finite}
	d^{\alpha}_{\proE}(A,B)
	&= \left(\frac{1}{\alpha^2}\trace(A^{2\alpha} + B^{2\alpha} - 2\trace(A^{\alpha}B^{2\alpha}A^{\alpha})^{1/2})\right)^{1/2}.
	\end{align}
\end{theorem}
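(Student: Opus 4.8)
The plan is to reduce the $\alpha$-Procrustes minimization to the classical Bures--Wasserstein Procrustes problem of Eq.~(\ref{equation:Bures-optimize-finite}), via the substitution $A \mapsto A^{2\alpha}$, $B \mapsto B^{2\alpha}$ (so that the square roots $A^{1/2}, B^{1/2}$ there become the powers $A^{\alpha}, B^{\alpha}$ here), and then read off the value from Eq.~(\ref{equation:Bures-finite}). I would nonetheless carry out the argument directly so the proof is self-contained. \emph{Step 1 (collapse to one variable).} By unitary invariance of the Frobenius norm, $\|A^{\alpha}U - B^{\alpha}V\|_F = \|A^{\alpha} - B^{\alpha}VU^{-1}\|_F$, and as $(U,V)$ runs over $\Ubb(n)\times\Ubb(n)$ the product $VU^{-1}$ runs over all of $\Ubb(n)$; hence the two minimizations in Definition~\ref{definition:alpha-procrustes-finite} agree and it suffices to evaluate $\min_{U\in\Ubb(n)}\|(A^{\alpha}-B^{\alpha}U)/\alpha\|_F$.

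\emph{Step 2 (expand the objective).} Since $A,B$ are symmetric and positive (semi-)definite, the powers $A^{\alpha},B^{\alpha}$ are well-defined by functional calculus --- this is the only place where the hypothesis $\alpha>0$ is used in case (ii) --- and are symmetric; using $U^{*}U=I$ one gets
\[
\left\|\frac{A^{\alpha}-B^{\alpha}U}{\alpha}\right\|_F^2 = \frac{1}{\alpha^2}\Bigl(\trace(A^{2\alpha}) + \trace(B^{2\alpha}) - 2\,\mathrm{Re}\,\trace(A^{\alpha}B^{\alpha}U)\Bigr).
\]
So minimizing the left-hand side over $U$ is the same as maximizing $\mathrm{Re}\,\trace(MU)$ over $U\in\Ubb(n)$, where $M := A^{\alpha}B^{\alpha}$.

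\emph{Step 3 (the max-trace lemma --- the crux).} The one genuinely nontrivial step is the identity
\[
\max_{U\in\Ubb(n)}\mathrm{Re}\,\trace(MU) = \trace\bigl((M^{*}M)^{1/2}\bigr) = \textstyle\sum_i \sigma_i(M),
\]
which I would obtain from the singular value decomposition of $M$ (equivalently, the polar decomposition $M = W(M^{*}M)^{1/2}$, the maximum being attained at $U = W^{*}$), or cite as von Neumann's trace inequality. Since $M$ is real, an optimal $U$ can be chosen real orthogonal, so allowing complex unitaries does not change the optimum. Here $M^{*}M = B^{\alpha}A^{2\alpha}B^{\alpha}$, so the maximum equals $\trace\bigl((B^{\alpha}A^{2\alpha}B^{\alpha})^{1/2}\bigr)$.

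\emph{Step 4 (symmetrize and conclude).} By the Remark containing Eq.~(\ref{equation:equivalent}), the matrices $B^{\alpha}A^{2\alpha}B^{\alpha}$, $A^{\alpha}B^{2\alpha}A^{\alpha}$ and $A^{2\alpha}B^{2\alpha}$ all have the same nonnegative nonzero eigenvalues, hence $\trace\bigl((B^{\alpha}A^{2\alpha}B^{\alpha})^{1/2}\bigr) = \trace\bigl((A^{\alpha}B^{2\alpha}A^{\alpha})^{1/2}\bigr)$. Plugging this into Step~2 yields $d^{\alpha}_{\proE}(A,B)^2 = \frac{1}{\alpha^2}\bigl(\trace(A^{2\alpha})+\trace(B^{2\alpha}) - 2\trace((A^{\alpha}B^{2\alpha}A^{\alpha})^{1/2})\bigr)$, which is Eq.~(\ref{equation:dproE-finite}); equivalently, $d^{\alpha}_{\proE}(A,B) = \frac{1}{|\alpha|}\,d_{\BW}(A^{2\alpha},B^{2\alpha})$, a convenient consistency check against Eqs.~(\ref{equation:Bures-optimize-finite})--(\ref{equation:Bures-finite}). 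No step above uses invertibility of $A$ or $B$, so case (ii) follows together with case (i); the sign restriction on $\alpha$ enters only through the well-definedness of $A^{\alpha},B^{\alpha}$ on positive semidefinite matrices. I expect Step~3 to be the main obstacle in a fully rigorous write-up --- in particular pinning down the attaining $U$ and the real-versus-complex point --- while Steps~1, 2 and 4 are routine unitary-invariance and eigenvalue bookkeeping.
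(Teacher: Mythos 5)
Your proposal is correct and follows essentially the route the paper intends: the paper gives no separate proof of this theorem, treating it as the classical Bures--Wasserstein Procrustes identity of Eq.~(\ref{equation:Bures-optimize-finite})--(\ref{equation:Bures-finite}) applied with $A\mapsto A^{2\alpha}$, $B\mapsto B^{2\alpha}$ (cf.\ Corollary~\ref{corollary:Bures-alpha} and the polar-decomposition argument in the proof of Theorem~\ref{theorem:Pdistance-AI-HS}), which is exactly your reduction. Your self-contained derivation via von Neumann's trace inequality / polar decomposition, together with the eigenvalue identification $\trace\bigl((B^{\alpha}A^{2\alpha}B^{\alpha})^{1/2}\bigr)=\trace\bigl((A^{\alpha}B^{2\alpha}A^{\alpha})^{1/2}\bigr)$, is sound in both cases (i) and (ii).
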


{\bf Special case: Bures-Wasserstein-Fr\'echet distance}. For $\alpha = 1/2$, $A,B \in \Sym^{+}(n)$, we obtain
\begin{align}
d_{\proE}^{1/2}(A,B) = 2(\trace[A + B - (A^{1/2}BA^{1/2})^{1/2}])^{1/2} =2 d_{\BW}(A,B).
\end{align}
This is precisely twice the Bures-Wasserstein-Fr\'echet distance \cite{Downson:1982,Olkin:1982,Malago:WassersteinGaussian2018,Bhatia:2018Bures}.

{\bf Special case: $A,B$ commute}. In this case, Eq.(\ref{equation:dproE-finite}) becomes
\begin{align}
d^{\alpha}_{\proE}(A,B) = \left\|\frac{A^{\alpha} - B^{\alpha}}{\alpha}\right\|_F.
\end{align}
This is precisely the power Euclidean distance \cite{Dryden:2009}. For $A,B \in \Sym^{++}(n)$, 
\begin{align}
\lim_{\alpha \approach 0}\left\|\frac{A^{\alpha} - B^{\alpha}}{\alpha}\right\|_F = ||\log(A) - \log(B)||_F = d_{\logE}(A,B).
\end{align}
The following
 shows that this limit also holds
for the Alpha Procrustes distance.

\begin{theorem}
	[\textbf{Limiting case - Log-Euclidean distance}]
	\label{theorem:limit-alpha-0-finite}
	Let $A,B \in \Sym^{++}(n)$ be fixed. Then
	\begin{align}
	\label{equation:limit-alpha-0-finite}
	\lim_{\alpha \approach 0}\frac{1}{\alpha^2} [\trace(A^{2\alpha} +  B^{2\alpha} - 2(A^{\alpha}B^{2\alpha}A^{\alpha})^{1/2}] = ||\log(A) - \log(B)||^2_F.
	\end{align}
\end{theorem}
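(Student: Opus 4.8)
The plan is to expand everything in powers of $\alpha$ about $\alpha=0$ and keep track of all terms through order $\alpha^{2}$. Set $a=\log A$ and $b=\log B$; these are symmetric matrices because $A,B\in\Sym^{++}(n)$. For any $X\in\Sym^{++}(n)$ one has $X^{t}=\exp(t\log X)=I+t\log X+\tfrac{t^{2}}{2}(\log X)^{2}+O(t^{3})$ as $t\to 0$, with the remainder controlled in Frobenius norm since the exponential is entire. Applying this with $t=2\alpha$ and with $t=\alpha$ gives
\[
A^{2\alpha}=I+2\alpha a+2\alpha^{2}a^{2}+O(\alpha^{3}),\qquad B^{2\alpha}=I+2\alpha b+2\alpha^{2}b^{2}+O(\alpha^{3}),
\]
and $A^{\alpha}=I+\alpha a+\tfrac{\alpha^{2}}{2}a^{2}+O(\alpha^{3})$.

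First I would multiply out $M(\alpha):=A^{\alpha}B^{2\alpha}A^{\alpha}$, retaining terms through order $\alpha^{2}$ (being careful that $a$ and $b$ need not commute); a direct computation gives $M(\alpha)=I+2\alpha(a+b)+2\alpha^{2}(a^{2}+ab+ba+b^{2})+O(\alpha^{3})$. Since $A^{\alpha}$ is invertible and $B^{2\alpha}$ is positive definite, $M(\alpha)$ is positive definite for every $\alpha$, and $M(\alpha)\to I$ as $\alpha\to 0$; hence for $|\alpha|$ small its spectrum lies in a fixed neighbourhood of $1$ on which the principal square root is real-analytic. Using $(I+E)^{1/2}=I+\tfrac12 E-\tfrac18 E^{2}+O(\|E\|^{3})$ with $E=M(\alpha)-I$, and noting that $\tfrac18E^{2}=\tfrac12\alpha^{2}(a+b)^{2}+O(\alpha^{3})=\tfrac12\alpha^{2}(a^{2}+ab+ba+b^{2})+O(\alpha^{3})$, I obtain
\[
\bigl(A^{\alpha}B^{2\alpha}A^{\alpha}\bigr)^{1/2}=I+\alpha(a+b)+\tfrac{\alpha^{2}}{2}(a^{2}+ab+ba+b^{2})+O(\alpha^{3}).
\]

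Assembling the three pieces, the constant terms cancel, the order-$\alpha$ terms cancel, and the order-$\alpha^{2}$ terms collapse to
\[
A^{2\alpha}+B^{2\alpha}-2\bigl(A^{\alpha}B^{2\alpha}A^{\alpha}\bigr)^{1/2}
=\alpha^{2}\bigl(a^{2}-ab-ba+b^{2}\bigr)+O(\alpha^{3})=\alpha^{2}(a-b)^{2}+O(\alpha^{3}).
\]
Taking the trace and using that $a-b=\log A-\log B$ is symmetric, so $\trace((a-b)^{2})=\|a-b\|_{F}^{2}$, gives $\trace\bigl(A^{2\alpha}+B^{2\alpha}-2(A^{\alpha}B^{2\alpha}A^{\alpha})^{1/2}\bigr)=\alpha^{2}\|\log A-\log B\|_{F}^{2}+O(\alpha^{3})$; dividing by $\alpha^{2}$ and letting $\alpha\to 0$ yields (\ref{equation:limit-alpha-0-finite}).

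The bookkeeping in the product $M(\alpha)$ and in the square-root expansion is routine; the one point that genuinely needs care is the uniform control of the $O(\alpha^{3})$ remainder \emph{through} the square-root operation. This is where I would invoke analyticity (equivalently, smoothness) of the principal square root on a neighbourhood of $I$ in $\Sym^{++}(n)$ — via the holomorphic functional calculus, or via the implicit function theorem applied to the equation $Y^{2}=M$, whose derivative $H\mapsto 2H$ at $Y=I$ is invertible — so that the composition $\alpha\mapsto M(\alpha)^{1/2}$ is a smooth function of $\alpha$ near $0$ and its Taylor remainder is genuinely $O(\alpha^{3})$ in Frobenius norm. Equivalently, one can recast the argument as the computation $f(0)=f'(0)=0$ and $f''(0)=2\|\log A-\log B\|_{F}^{2}$ for the scalar $C^{2}$ function $f(\alpha)=\trace(A^{2\alpha}+B^{2\alpha}-2(A^{\alpha}B^{2\alpha}A^{\alpha})^{1/2})$, followed by Taylor's theorem to get $\lim_{\alpha\to 0}f(\alpha)/\alpha^{2}=\tfrac12 f''(0)$.
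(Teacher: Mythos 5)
Your proposal is correct and follows essentially the same route as the paper's proof: a Taylor expansion of $A^{2\alpha}$, $B^{2\alpha}$ and of $M(\alpha)=A^{\alpha}B^{2\alpha}A^{\alpha}$ in powers of $\alpha$, followed by the binomial series for $(I+E)^{1/2}$ to extract the order-$\alpha^{2}$ term $\alpha^{2}(\log A-\log B)^{2}$. The only differences are cosmetic (the paper first splits off $\|A^{\alpha}-I\|_F^{2}/\alpha^{2}$ and $\|B^{\alpha}-I\|_F^{2}/\alpha^{2}$ and treats the cross term separately, whereas you expand the whole trace at once) and that you are somewhat more explicit than the paper in justifying the uniform $O(\alpha^{3})$ control of the remainder through the square root.
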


We have then the following result.
\begin{theorem}
	[\textbf{Parametrized family of distances - SPD matrices}]
	\label{theorem:metric-family-finite}
The function $d^{\alpha}_{\proE}$, as defined in Eq.(\ref{equation:dproE-finite}), is a metric on the set $\Sym^{++}(n)$ for all $\alpha \in \R$, with twice the Bures-Wasserstein-Fr\'echet distance corresponding to $\alpha = 1/2$ and the Log-Euclidean distance corresponding to $\alpha = 0$. Furthermore, $d^{\alpha}_{\proE}$ is a metric on the set $\Sym^{+}(n)$ for all $\alpha > 0$.
\end{theorem}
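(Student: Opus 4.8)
The plan is to recognize $d^{\alpha}_{\proE}$, for $\alpha \neq 0$, as a rescaled pullback of the Bures-Wasserstein distance under a power map, thereby reducing the metric axioms to the already-known fact that $d_{\BW}$ is a metric on the cone $\Sym^{+}(n)$. The starting point is the identity
\begin{align}
d^{\alpha}_{\proE}(A,B) = \frac{1}{|\alpha|}\, d_{\BW}(A^{2\alpha}, B^{2\alpha}),
\end{align}
valid for $A,B \in \Sym^{++}(n)$ when $\alpha \neq 0$ and for $A,B \in \Sym^{+}(n)$ when $\alpha > 0$. It follows at once from Definition~\ref{definition:alpha-procrustes-finite} and the Procrustes formulation~(\ref{equation:Bures-optimize-finite}) on observing that $(A^{2\alpha})^{1/2} = A^{\alpha}$, or equivalently by comparing the explicit formula~(\ref{equation:dproE-finite}) with the Bures-Wasserstein expression~(\ref{equation:Bures-finite}).

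Next I would assemble the two ingredients this reduction requires. First, the power map $\phi_{\alpha}: A \mapsto A^{2\alpha}$ (defined by functional calculus) is a bijection of $\Sym^{++}(n)$ onto itself for every $\alpha \neq 0$, with inverse $B \mapsto B^{1/(2\alpha)}$; and for $\alpha > 0$ it restricts to a bijection of the full cone $\Sym^{+}(n)$ onto itself. Second, $d_{\BW}$ is a genuine metric on $\Sym^{+}(n)$: non-negativity, symmetry, and the fact that it vanishes exactly on the diagonal are elementary from~(\ref{equation:Bures-finite}), while the triangle inequality --- the only substantive point --- is established in \cite{Bhatia:2018Bures,Masarotto:2018Procrustes}. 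Granting these, $d^{\alpha}_{\proE} = \frac{1}{|\alpha|}\big(d_{\BW}\circ(\phi_{\alpha}\times\phi_{\alpha})\big)$ is obtained from a metric by composing with a bijection and multiplying by a positive constant; both operations preserve every metric axiom, with injectivity of $\phi_{\alpha}$ being precisely what is needed to deduce $A = B$ from $d^{\alpha}_{\proE}(A,B) = 0$. Hence $d^{\alpha}_{\proE}$ is a metric on $\Sym^{++}(n)$ for all $\alpha \neq 0$ and on $\Sym^{+}(n)$ for all $\alpha > 0$. For $\alpha = 0$ one takes $d^{0}_{\proE}$ to be the limit in Theorem~\ref{theorem:limit-alpha-0-finite}, which equals $d_{\logE}(A,B) = \|\log(A) - \log(B)\|_F$; this is a metric on $\Sym^{++}(n)$ since $\log$ is a bijection of $\Sym^{++}(n)$ onto $\Sym(n)$ and $\|\cdot\|_F$ is a norm there, i.e.\ again a bijective pullback of a metric. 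Together with the computation preceding the theorem giving $d^{1/2}_{\proE} = 2 d_{\BW}$, this yields the claimed identifications at $\alpha = 1/2$ and $\alpha = 0$.

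I do not expect a genuine obstacle here, since the one hard fact --- the triangle inequality --- is supplied by the known metric property of $d_{\BW}$, and the remaining steps (bijectivity of $\phi_{\alpha}$, stability of the metric axioms under bijective pullback and positive rescaling) are routine. The only point demanding a little care is the treatment of singular matrices: one should verify that the restriction to $\alpha > 0$ is exactly what makes $\phi_{\alpha}$ well-defined and bijective on all of $\Sym^{+}(n)$ (for $\alpha < 0$ the map is not even defined at singular $A$), and one must invoke that $d_{\BW}$ is a metric on the entire cone $\Sym^{+}(n)$, not merely on its interior.
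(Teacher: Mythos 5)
Your proposal is correct and follows essentially the same route as the paper, which likewise disposes of $\alpha=0$ via the Log-Euclidean metric and reduces $\alpha\neq 0$ to the known metric properties of $d_{\BW}$; your identity $d^{\alpha}_{\proE}(A,B)=\frac{1}{|\alpha|}d_{\BW}(A^{2\alpha},B^{2\alpha})$ together with the bijectivity of the power map simply makes explicit what the paper's one-line proof leaves implicit.
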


{\bf Comparison between the Alpha Procrustes and Power Euclidean distances}.
We observe from their corresponding explicit formulas that the Alpha Procrustes and Power Euclidean distances,
with the same power $\alpha$, coincide when $A$ and $B$ commute. We now show that
the Alpha Procrustes distance is {\it strictly smaller} than the Power Euclidean distance when $A$ and $B$ do not commute.
We make use of the Araki-Lieb-Thirring inequality \cite{Lieb1976inequalities,Araki1990inequality,Wang1995trace}. 
For two matrices $A,B \in \Sym^{+}(n)$ ($A,B \in \Sym^{++}(n)$ if $r < 0$), 
\begin{align}
\trace(B^{1/2}AB^{1/2})^{r} \geq \trace(A^{r}B^{r}), \;\;\; |r| \leq 1,
\end{align}
with equality if and only if $r = 0, 1 -1$, or $AB = BA$. In particular, for $r = 1/2$, 
\begin{align}
\label{equation:Araki-Lieb-Thirring}
\trace[(A^{1/2}BA^{1/2})^{1/2}] \geq \trace(A^{1/2}B^{1/2}),
\end{align}
with equality if and only if $AB = BA$. The following is a consequence of this inequality in our setting.

\begin{theorem} 
	[\textbf{Comparison between Alpha Procrustes and Power Euclidean distances}]	
	\label{theorem:power-euclidean-procrustes-compare}
	Let $\alpha \in \R, \alpha \neq 0$ be fixed but arbitrary. Then for any pair $A,B \in \Sym^{+}(n)$ ($A,B \in \Sym^{++}(n)$ if $\alpha < 0$),
	\begin{align}
	d_{\proE}^{\alpha}(A,B) &= \frac{(\trace[A^{2\alpha} + B^{2\alpha} - 2(A^{\alpha}B^{2\alpha}A^{\alpha})^{1/2}])^{1/2}}{|\alpha|}
	\nonumber
	\\
	&\leq d_{E,\alpha}(A,B) =\left\|\frac{A^{\alpha} - B^{\alpha}}{\alpha}\right\|_F,
	\end{align}
	with equality if and only if $A,B$ commute. For $A,B \in \Sym^{++}(n)$, at the limit $\alpha = 0$,
	both sides are equal to the Log-Euclidean distance $||\log(A)-\log(B)||_F$.
\end{theorem}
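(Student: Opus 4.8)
The plan is to reduce the comparison to a single application of the Araki--Lieb--Thirring inequality already recorded in Eq.(\ref{equation:Araki-Lieb-Thirring}). First I would expand the squared Power Euclidean distance. Since $A^{\alpha} - B^{\alpha}$ is symmetric, cyclicity of the trace gives
\begin{align}
\label{equation:power-euclidean-expand}
\alpha^2 d_{E,\alpha}^2(A,B) = \|A^{\alpha} - B^{\alpha}\|_F^2 = \trace[(A^{\alpha}-B^{\alpha})^2] = \trace(A^{2\alpha}) + \trace(B^{2\alpha}) - 2\trace(A^{\alpha}B^{\alpha}).
\end{align}
On the other hand, by Eq.(\ref{equation:dproE-finite}) (equivalently the formula in the statement of Theorem~\ref{theorem:power-euclidean-procrustes-compare}),
$\alpha^2 d_{\proE}^{\alpha}(A,B)^2 = \trace[A^{2\alpha} + B^{2\alpha} - 2(A^{\alpha}B^{2\alpha}A^{\alpha})^{1/2}]$. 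Subtracting, the claimed inequality $d_{\proE}^{\alpha}(A,B) \le d_{E,\alpha}(A,B)$ is therefore equivalent to
\begin{align}
\label{equation:reduction-to-ALT}
\trace[(A^{\alpha}B^{2\alpha}A^{\alpha})^{1/2}] \ \geq\ \trace(A^{\alpha}B^{\alpha}).
\end{align}

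Second, I would obtain \eqref{equation:reduction-to-ALT} from Eq.(\ref{equation:Araki-Lieb-Thirring}) by applying it with $A$ replaced by $A^{2\alpha}$ and $B$ replaced by $B^{2\alpha}$: these matrices lie in $\Sym^{+}(n)$ (in $\Sym^{++}(n)$ when $\alpha<0$), so the inequality is valid, and $(A^{2\alpha})^{1/2} = A^{\alpha}$, $(B^{2\alpha})^{1/2} = B^{\alpha}$. This yields \eqref{equation:reduction-to-ALT}, hence the inequality of the theorem, and the equality clause of Eq.(\ref{equation:Araki-Lieb-Thirring}) tells us that equality holds if and only if $A^{2\alpha}B^{2\alpha} = B^{2\alpha}A^{2\alpha}$.

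Third, I would translate this last commutation condition back to $AB = BA$. Since $A$ and $B$ are positive (semi-)definite, $A$ is recovered from $A^{2\alpha}$ via the continuous functional calculus, $A = (A^{2\alpha})^{1/(2\alpha)}$, and symmetrically $A^{2\alpha}$ is a function of $A$ (in the positive semidefinite case with $\alpha>0$ one argues the same way on the orthogonal complement of the kernel, or simply notes that commuting symmetric matrices are simultaneously diagonalizable and that taking powers preserves the common eigenbasis). Hence $A^{2\alpha}$ and $B^{2\alpha}$ commute if and only if $A$ and $B$ commute, which settles the equality statement. Finally, for the limiting case $\alpha \to 0$ with $A,B \in \Sym^{++}(n)$: the left-hand side tends to $\|\log A - \log B\|_F$ by Theorem~\ref{theorem:limit-alpha-0-finite}, while $\lim_{\alpha \to 0}(A^{\alpha}-B^{\alpha})/\alpha = \log A - \log B$ (differentiating $\alpha \mapsto A^{\alpha} = \exp(\alpha \log A)$ at $\alpha=0$), so $d_{E,\alpha}(A,B) \to \|\log A - \log B\|_F = d_{\logE}(A,B)$ as well, and both sides coincide.

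\textbf{Main obstacle.} Almost every step is a one-line invocation of an already-stated inequality, an expansion of a Frobenius norm, or a standard limit; the only point requiring care is making the equivalence $A^{2\alpha}B^{2\alpha}=B^{2\alpha}A^{2\alpha} \Leftrightarrow AB=BA$ fully airtight, in particular in the positive semidefinite regime where $\alpha>0$ and one must be a little careful with the kernel when inverting the power map.
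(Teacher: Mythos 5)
Your proposal is correct and follows essentially the same route as the paper: both reduce the comparison to the Araki--Lieb--Thirring inequality of Eq.(\ref{equation:Araki-Lieb-Thirring}) applied with $A,B$ replaced by $A^{2\alpha},B^{2\alpha}$, after expanding $\|A^{\alpha}-B^{\alpha}\|_F^2$ via the trace. If anything, you are slightly more careful than the paper's own proof, which silently identifies the equality condition $A^{2\alpha}B^{2\alpha}=B^{2\alpha}A^{2\alpha}$ with $AB=BA$ and omits the $\alpha\to 0$ limit; your functional-calculus argument for the commutation equivalence and your explicit treatment of the limiting case fill in exactly those details.
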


{\bf A parametrized family of distances between Gaussian measures on Euclidean space}. 
Each distance/divergence on the set $\Sym^{+}(n)$ corresponds to a distance/divergence on 
the set of zero-mean Gaussian measures $\Ncal(0,C)$, $C \in \Sym^{+}(n)$, and vice versa. Similarly,
each distance/divergence on the set $\Sym^{++}(n)$ corresponds to a distance/divergence 
on the set of zero-mean {\it non-degenerate} Gaussian measures $\Ncal(0,C)$, $C \in \Sym^{++}(n)$, and vice versa.

For general Gaussian measures of the form $\Ncal(m, C)$, $m \in \R^n$,
motivated by Eq.~(\ref{equation:L2-Wasserstein}) and Theorem \ref{theorem:metric-family-finite}, we arrive at the following result.
\begin{theorem}
	[\textbf{Parametrized family of distances - Gaussian measures on Euclidean space}]
	\label{theorem:metric-Gaussian-Rn}
	Let $d_{\mean}:\R^n \times \R^n\mapto \R^{+}$ be a metric on $\R^n$.
	The following is a parametrized family of squared distances on the set of non-degenerate Gaussian measures on $\R^n$,
	with $\alpha \in \R$, $\alpha \neq 0$,
	\begin{align}
	(D^{\alpha}_{\proE}[\Ncal(m_1,C_1)|| \Ncal(m_2,C_2)])^2 &= d^2_{\mean}(m_1,m_2)
	\nonumber
	\\
	&+ \frac{1}{4\alpha^2}\trace[C_1^{2\alpha} + C_2^{2\alpha} - 2(C_1^{\alpha}C_2^{2\alpha}C_1^{\alpha})^{1/2}].
	\end{align}
	The statememt remains true on the larger set of general Gaussian measures on $\R^n$ if we restrict $\alpha$ to $\alpha > 0$.
\end{theorem}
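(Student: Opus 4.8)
The plan is to reduce the statement to Theorem~\ref{theorem:metric-family-finite} together with the elementary fact that an $\ell^2$-combination of two metrics is again a metric. First I would rewrite the covariance term using the explicit formula in Eq.~(\ref{equation:dproE-finite}):
\[
\frac{1}{4\alpha^2}\trace[C_1^{2\alpha} + C_2^{2\alpha} - 2(C_1^{\alpha}C_2^{2\alpha}C_1^{\alpha})^{1/2}] = \Bigl(\tfrac12\, d^{\alpha}_{\proE}(C_1,C_2)\Bigr)^2 ,
\]
so that, writing $\mu_i = \Ncal(m_i,C_i)$,
\[
(D^{\alpha}_{\proE}[\mu_1\,\|\,\mu_2])^2 = d_{\mean}(m_1,m_2)^2 + \Bigl(\tfrac12\, d^{\alpha}_{\proE}(C_1,C_2)\Bigr)^2 .
\]
By Theorem~\ref{theorem:metric-family-finite}, $d^{\alpha}_{\proE}$ is a metric on $\Sym^{++}(n)$ for every $\alpha \neq 0$, hence so is the rescaling $\tfrac12\, d^{\alpha}_{\proE}$; moreover $\tfrac12\, d^{\alpha}_{\proE}$ is a metric on $\Sym^{+}(n)$ when $\alpha > 0$, while $d_{\mean}$ is a metric on $\R^n$ by hypothesis.

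Next I would record the general lemma: if $(X_1,\rho_1)$ and $(X_2,\rho_2)$ are metric spaces, then $\rho\bigl((x_1,x_2),(y_1,y_2)\bigr) := \bigl(\rho_1(x_1,y_1)^2 + \rho_2(x_2,y_2)^2\bigr)^{1/2}$ is a metric on $X_1\times X_2$. Non-negativity and symmetry are immediate, and $\rho\bigl((x_1,x_2),(y_1,y_2)\bigr)=0$ forces $\rho_1(x_1,y_1)=\rho_2(x_2,y_2)=0$, hence $(x_1,x_2)=(y_1,y_2)$. For the triangle inequality, fix a third point $(z_1,z_2)$ and set $u=(\rho_1(x_1,y_1),\rho_2(x_2,y_2))$ and $v=(\rho_1(y_1,z_1),\rho_2(y_2,z_2))$ in $\R^2$. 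Since $0 \le \rho_i(x_i,z_i) \le \rho_i(x_i,y_i)+\rho_i(y_i,z_i)$ for $i=1,2$, coordinate-wise monotonicity of the Euclidean norm on the nonnegative orthant gives $\rho\bigl((x_1,x_2),(z_1,z_2)\bigr) = \bigl\|(\rho_1(x_1,z_1),\rho_2(x_2,z_2))\bigr\| \le \|u+v\|$, and Minkowski's inequality in $\R^2$ gives $\|u+v\| \le \|u\| + \|v\| = \rho\bigl((x_1,x_2),(y_1,y_2)\bigr) + \rho\bigl((y_1,y_2),(z_1,z_2)\bigr)$.

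Finally I would apply this lemma with $(X_1,\rho_1) = (\R^n, d_{\mean})$ and $(X_2,\rho_2) = (\Sym^{++}(n),\, \tfrac12\, d^{\alpha}_{\proE})$, concluding that the displayed right-hand side is the square of a metric on $\R^n \times \Sym^{++}(n)$. Since a non-degenerate Gaussian measure on $\R^n$ is in bijective correspondence with its pair $(\text{mean},\text{covariance}) \in \R^n \times \Sym^{++}(n)$, and two Gaussians coincide precisely when these pairs coincide, this is exactly the assertion that $D^{\alpha}_{\proE}$ is a metric on the set of non-degenerate Gaussian measures. Replacing $\Sym^{++}(n)$ by $\Sym^{+}(n)$, which is licit precisely when $\alpha > 0$ (so that $C^{\alpha}$ remains well defined for singular $C$), extends the conclusion to the set of all Gaussian measures on $\R^n$. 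The argument is essentially bookkeeping: the one step that goes beyond unwinding definitions is the triangle inequality, i.e.\ the Minkowski step above, and it presents no real difficulty.
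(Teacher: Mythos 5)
Your proposal is correct and follows essentially the same route as the paper: the paper also reduces the statement to Theorem~\ref{theorem:metric-family-finite} combined with Lemma~\ref{lemma:metric-quadratic-Gaussian} (the $\ell^2$-combination of two metrics is a metric), whose proof is exactly your coordinate-wise triangle inequality plus the two-dimensional Minkowski inequality (the paper's Lemma~\ref{lemma:inequality-quadratic}).
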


{\bf Special case: $\L^2$-Wasserstein distance}. For $d_{\mean}(m_1, m_2) = ||m_1 - m_2||_2$, the $\ell^2$-norm on $\R^n$ and $\alpha = 1/2$, we recover the squared $\L^2$-Wasserstein distance in Eq.~(\ref{equation:L2-Wasserstein}).

{\bf Limiting case}. As $\alpha \approach 0$, we obtain the following squared distance
\begin{align}
	&(D^{\alpha}_{\logE}[\Ncal(m_1,C_1)|| \Ncal(m_2,C_2)])^2 =\lim_{\alpha \approach 0}(D^{\alpha}_{\proE}[\Ncal(m_1,C_1)|| \Ncal(m_2,C_2)])^2
	\nonumber
	\\
	& = d^2_{\mean}(m_1,m_2) + \frac{1}{4}||\log(C_1) - \log(C_2)||^2_{F}.
\end{align}

\subsection{Riemannian geometry of Alpha Procrustes distances}
\label{section:Riemannian}

In this section, we present the Riemannian metric structures corresponding to the Alpha Procrustes distances. 
The method used to obtain the Riemannian metric and geodesics is a generalization of those in
the setting of the Wasserstein Riemannian metric \cite{Takatsu2011wasserstein},\cite{Bhatia:2018Bures}, \cite{Malago:WassersteinGaussian2018}.

We recall that a differentiable map $\pi: (\M, g) \mapto (\Ncal,h)$ between two Riemannian manifolds
$(\Mcal, g)$ and $(\Ncal, h)$ is said to be a smooth submersion if its differential map $D\pi(P): T_P\Mcal \mapto T_{\pi(P)}\Ncal$ is surjective
$\forall P \in \Mcal$. Consider the direct sum decomposition
\begin{align}
T_P\Mcal = \Vcal_P \oplus \Hcal_P = \ker(D\pi(P)) \oplus (\ker(D\pi(P)))^{\perp},
\end{align}
where $\Vcal_P$ and $\Hcal_P$ are the vertical and horizontal spaces, respectively, at $P$.
A smooth submersion $\pi$ is said to be a Riemannian submersion if the differential map
$D\pi(P):\Hcal_P \mapto T_{\pi(P)}\Ncal$ is isometric $\forall P \in \Mcal$.

Let $\Mrm(n)$ denote the vector space of $n \times n$ real matrices, equipped with the Frobeninus inner product $\la \;,\;\ra_F$, and $\GL(n) \subset \Mrm(n)$ the open set of invertible matrices.
Then $(\GL(n), \la \; ,\ra_F)$ is a Riemannian manifold, with the tangent space at each point identified with $\Mrm(n)$, along with the inherited Frobenius inner product.
We now proceed by constructing a smooth submersion from the Riemannian manifold
$(\GL(n), \la \; , \;\ra_F)$ onto $\Sym^{++}(n)$ and deriving the corresponding Riemannian metric
on $\Sym^{++}(n)$ so that this is a Riemannian submersion.

To obtain the subsequent results, we make use of the following properties of the 
exponential map $\exp: \Sym(n) \mapto \Sym^{++}(n)$, $\exp(A) = \sum_{k=0}^{\infty}\frac{A^k}{k!}$, and 
its inverse map $\log:\Sym^{++}(n) \mapto \Sym(n)$.
Since $\exp(\log(A_0)) = A_0$ for all $A_0 \in \Sym^{++}(n)$, by the chain rule
\begin{align}
D\exp(\log(A_0))\compose D\log(A_0)A = A, \;\;\;A \in \Sym(n).
\end{align}
Similarly, from $\log(\exp(B_0)) = B_0$ for all $B_0 \in \Sym(n)$, we obtain
\begin{align}
D\log(A_0) \compose D\exp(\log(A_0))A = A, \;\;A_0 = \exp(B_0), A \in \Sym(n).
\end{align} 
It thus follows that as linear operators on $\Sym(n)$, $D\log(A_0)$ and $D\exp(\log(A_0))$ are invertible and
\begin{align}
D\log(A_0) = [D\exp(\log(A_0))]^{-1}, \;\;\; A_0 \in \Sym^{++}(n).
\end{align}

Let $\alpha \in \R, \alpha \neq 0$ be fixed.
Consider the following map $\pi_{\alpha}: \GL(n) \mapto \Sym^{++}(n)$ defined by 
\begin{align}
\label{equation:submersion}
\pi_{\alpha}(A) = (\alpha^2 AA^{*})^{1/(2\alpha)} = \exp\left(\frac{1}{2\alpha}\log(g(A))\right),
\end{align}
where $g:\GL(n) \mapto \Sym^{++}(n)$ is defined by $g(A) = \alpha^2 AA^{*}$. 

\begin{proposition}
	\label{proposition:differential}
For each $\alpha \in \R, \alpha \neq 0$ fixed, the map $\pi_{\alpha}:\GL(n) \mapto \Sym^{++}(n)$, as defined in Eq.(\ref{equation:submersion}), is a smooth submersion.
	Let $A_0 \in \GL(n)$ be fixed. The differential map $D\pi_{\alpha}(A_0): \Mrm(n) \mapto \Sym(n)$ is given by
	\begin{align}
	D\pi_{\alpha}(A_0)(X) =& \frac{\alpha}{2}D\exp\left(\frac{1}{2\alpha}\log(\alpha^2 A_0A_0^{*})\right) \compose D\log(\alpha^2 A_0A_0^{*})(XA_0^{*} + A_0^{*}X),
	\nonumber
	\\
	&X \in \Mrm(n).
	\end{align}
	The vertical and horizontal spaces of the map $\pi_{\alpha}$ at $A_0$ are given by
	\begin{align}
	&\Vcal_{A_0} = \ker(D\pi_{\alpha}(A_0)) = (\Sym(n))^{\perp}(A_0^{*})^{-1},
	\\
	&\Hcal_{A_0} =  (\ker(D\pi_{\alpha}(A_0)))^{\perp} = \Sym(n)A_0.
	\end{align}
\end{proposition}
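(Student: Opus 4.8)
The plan is to factor $\pi_\alpha$ as the composition of the smooth maps $g(A) = \alpha^2 AA^*$ from $\GL(n)$ into $\Sym^{++}(n)$, the scaled logarithm $S \mapsto \frac{1}{2\alpha}\log(S)$ from $\Sym^{++}(n)$ into $\Sym(n)$, and $\exp$ from $\Sym(n)$ into $\Sym^{++}(n)$ (all smooth, and $g$ lands in $\Sym^{++}(n)$ precisely because $A_0$ is invertible and $\alpha \neq 0$), then apply the chain rule and read off the kernel together with its Frobenius-orthogonal complement.

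First I would compute, directly from the definition, $Dg(A_0)(X) = \alpha^2(XA_0^* + A_0 X^*)$ for $X \in \Mrm(n)$, noting this always lies in $\Sym(n)$ since $(XA_0^*)^* = A_0 X^*$. Composing this with $\frac{1}{2\alpha}D\log(\alpha^2 A_0 A_0^*)$ and then with $D\exp\bigl(\frac{1}{2\alpha}\log(\alpha^2 A_0 A_0^*)\bigr)$, and absorbing $\frac{1}{2\alpha}\cdot\alpha^2 = \frac{\alpha}{2}$, yields the stated expression for $D\pi_\alpha(A_0)(X)$. Since $D\exp(\cdot)$ and $D\log(\cdot)$ are invertible linear operators on $\Sym(n)$ — as recalled immediately before the proposition — the image of $D\pi_\alpha(A_0)$ equals the image of $X \mapsto \frac{\alpha}{2}(XA_0^* + A_0 X^*)$, which is all of $\Sym(n)$: given $S \in \Sym(n)$, the matrix $X = \frac{1}{\alpha}S(A_0^*)^{-1}$ satisfies $XA_0^* = A_0 X^* = \frac{1}{\alpha}S$. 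Combined with smoothness of the three factors, this proves that $\pi_\alpha$ is a smooth submersion.

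For the vertical space, invertibility of $D\exp$ and $D\log$ reduces $D\pi_\alpha(A_0)(X) = 0$ to $XA_0^* + A_0 X^* = 0$, i.e. to $XA_0^* = -(XA_0^*)^*$ being skew-symmetric; since the skew-symmetric matrices form exactly $(\Sym(n))^{\perp}$ under $\la\;,\;\ra_F$ and $A_0$ is invertible, this gives $\Vcal_{A_0} = (\Sym(n))^{\perp}(A_0^*)^{-1}$. For the horizontal space $\Hcal_{A_0} = (\Vcal_{A_0})^{\perp}$, I would first check the inclusion $\Sym(n)A_0 \subseteq (\Vcal_{A_0})^{\perp}$ by the computation $\la SA_0,\, Z(A_0^*)^{-1}\ra_F = \trace(A_0^* S Z (A_0^*)^{-1}) = \trace(SZ) = 0$ for every symmetric $S$ and skew-symmetric $Z$, the last equality since $\trace(SZ) = \trace((SZ)^*) = -\trace(SZ)$. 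The inclusion upgrades to equality by a dimension count: both $S \mapsto SA_0$ and $Z \mapsto Z(A_0^*)^{-1}$ are injective because $A_0 \in \GL(n)$, so $\dim \Sym(n)A_0 + \dim \Vcal_{A_0} = \frac{n(n+1)}{2} + \frac{n(n-1)}{2} = n^2 = \dim \Mrm(n)$, forcing $\Mrm(n) = \Sym(n)A_0 \oplus \Vcal_{A_0}$ to be an orthogonal decomposition and hence $\Hcal_{A_0} = \Sym(n)A_0$.

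No step is a genuine obstacle: the only non-formal ingredient, invertibility of $D\exp$ and $D\log$ on $\Sym(n)$, is already established, and everything else is chain-rule bookkeeping plus the standard trace identity pairing symmetric against skew-symmetric matrices. The point demanding the most care is tracking the adjoints/transposes when differentiating $g$ and when evaluating the Frobenius pairing $\la SA_0, Z(A_0^*)^{-1}\ra_F$.
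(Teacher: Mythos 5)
Your proposal is correct and follows essentially the same route as the paper: compute $Dg(A_0)(X)=\alpha^2(XA_0^{*}+A_0X^{*})$, apply the chain rule, and use invertibility of $D\exp$ and $D\log$ on $\Sym(n)$ to reduce the kernel and image computations to those of $Dg(A_0)$. You additionally supply details the paper leaves implicit (an explicit preimage for surjectivity, and the orthogonality check plus dimension count identifying $\Hcal_{A_0}=\Sym(n)A_0$), and you correctly use $XA_0^{*}+A_0X^{*}$ where the proposition's displayed formula has a typo.
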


\begin{proposition}
	\label{proposition:general-lyapunov}
	Let $P_0 \in \Sym^{++}(n)$ be fixed. Let $\alpha \in \R$ be fixed.
	For any matrix $Y \in \Sym(n)$, there is a unique matrix $H \in \Sym(n)$ such that
	\begin{align} 
	\label{equation:general-lyapunov}
	& D\exp(\log(P_0)) \compose D\log(P_0^{2\alpha})(HP_0^{2\alpha} + P_0^{2\alpha} H) = Y.
	\end{align}
\end{proposition}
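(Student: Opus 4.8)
The plan is to recognize the left-hand side of Eq.~(\ref{equation:general-lyapunov}) as the composition of three linear operators on the finite-dimensional vector space $\Sym(n)$, each of which is invertible; existence and uniqueness of $H$ then follow at once, since a composition of bijections is a bijection. First I would note that $P_0 \in \Sym^{++}(n)$ implies $P_0^{2\alpha} \in \Sym^{++}(n)$ for every $\alpha \in \R$ (with $P_0^{2\alpha} = I$ when $\alpha = 0$). Introduce the Lyapunov operator $\mathcal{L}: \Sym(n) \mapto \Sym(n)$, $\mathcal{L}(H) = HP_0^{2\alpha} + P_0^{2\alpha}H$, which does map $\Sym(n)$ into itself because $P_0^{2\alpha}$ is symmetric. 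I would then show $\mathcal{L}$ is a bijection, either by diagonalizing $P_0^{2\alpha} = U\,\mathrm{diag}(\mu_1,\dots,\mu_n)\,U^{*}$ with all $\mu_i > 0$ and observing that in these (conjugated) coordinates $\mathcal{L}$ acts on the $(i,j)$ entry by multiplication by $\mu_i + \mu_j > 0$, or, more explicitly, by exhibiting the inverse $\mathcal{L}^{-1}(Z) = \int_0^{\infty} e^{-tP_0^{2\alpha}}\,Z\,e^{-tP_0^{2\alpha}}\,dt$, a well-defined integral (convergent since $P_0^{2\alpha} \succ 0$) which manifestly preserves symmetry and satisfies $\mathcal{L}\bigl(\mathcal{L}^{-1}(Z)\bigr) = -\int_0^{\infty} \frac{d}{dt}\bigl(e^{-tP_0^{2\alpha}}Z\,e^{-tP_0^{2\alpha}}\bigr)\,dt = Z$. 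Either way, $\mathcal{L}$ is an invertible self-map of $\Sym(n)$.

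Next I would invoke the facts already established in the paragraphs preceding the statement: for any $A_0 \in \Sym^{++}(n)$, the linear maps $D\log(A_0)$ and $D\exp(\log(A_0))$ are invertible self-maps of $\Sym(n)$ that are mutual inverses. Taking $A_0 = P_0^{2\alpha} \in \Sym^{++}(n)$ shows $D\log(P_0^{2\alpha})$ is a bijection of $\Sym(n)$, and taking $A_0 = P_0 \in \Sym^{++}(n)$ shows $D\exp(\log(P_0))$ is a bijection of $\Sym(n)$.

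Combining these observations, the map $H \mapsto D\exp(\log(P_0)) \compose D\log(P_0^{2\alpha}) \compose \mathcal{L}(H)$ is a composition of three bijections of $\Sym(n)$, hence a bijection; therefore for each $Y \in \Sym(n)$ there is exactly one $H \in \Sym(n)$ solving Eq.~(\ref{equation:general-lyapunov}), given explicitly by $H = \mathcal{L}^{-1}\bigl(D\exp(\log(P_0^{2\alpha})) \compose D\log(P_0)(Y)\bigr)$. The only step that demands genuine care is the invertibility of $\mathcal{L}$ together with the verification that both $\mathcal{L}$ and $\mathcal{L}^{-1}$ stay within $\Sym(n)$; this is the classical Lyapunov-equation argument, and once it is in place the remaining two factors are handled verbatim by the chain-rule identities quoted above, so no further obstacle remains.
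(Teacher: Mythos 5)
Your proof is correct and follows essentially the same route as the paper's: factor the left-hand side as the composition of the Lyapunov operator $H \mapsto HP_0^{2\alpha} + P_0^{2\alpha}H$ with the two invertible differential maps $D\log(P_0^{2\alpha})$ and $D\exp(\log(P_0))$, and conclude that a composition of bijections of $\Sym(n)$ is a bijection. The only difference is that you spell out the classical invertibility argument for the Lyapunov operator (via diagonalization or the integral formula), which the paper simply cites as known.
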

{\bf Notation}. We denote the unique matrix $H$ satisfying Eq.(\ref{equation:general-lyapunov}) by
\begin{align}
\label{equation:lyapunov-notation}
H = \Lcal_{P_0, \alpha}(Y).
\end{align}
{\bf Special case: Lyapunov equation}. For $\alpha = 1/2$ (the Wasserstein case), since
$D\exp(\log(P_0)) \compose D\log(P_0) = \1_{\Sym(n)}$, Eq.(\ref{equation:general-lyapunov}) reduces to
the Lyapunov equation
\begin{align}
HP_0 + P_0H = Y,
\end{align}
which has a unique solution $H \in \Sym(n)$.

We now construct the Riemannian metric on $\Sym^{++}(n)$ that turns $\pi_{\alpha}$ into a Riemannian submersion.
The following is the main result in this section.
\begin{theorem}
	[\textbf{Riemannian metric}]
	\label{theorem:general-metric}
	Define the following Riemannian metric on $\Sym^{++}(n)$. For each $P_0 \in \Sym^{++}(n)$, the 
	inner product on the tangent space $T_{P_0}(\Sym^{++}(n)) = \Sym(n)$ is given by
	\begin{align}
	\label{equation:general-metric}
	\la Y, Z\ra_{P_0} = 4\trace(\Lcal_{P_0, \alpha}(Y)P_0^{2\alpha}\Lcal_{P_0, \alpha}(Z)), \;\;\; Y, Z \in \Sym(n),
	\end{align}
	where $\Lcal_{P_0, \alpha}(Y)$ is as defined in Proposition \ref{proposition:general-lyapunov} and Eq.(\ref{equation:lyapunov-notation}). Then for any $\alpha \in \R, \alpha \neq 0$, the map
	$\pi_{\alpha}: (\GL(n), \la \; , \;\ra_F) \mapto (\Sym^{++}(n), \la \;, \;\ra_{P_0})$, as defined 
	in Eq.(\ref{equation:submersion}) is a Riemannian submersion.
\end{theorem}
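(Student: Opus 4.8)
The plan is to verify the defining property of a Riemannian submersion. By Proposition~\ref{proposition:differential} the map $\pi_{\alpha}$ is already a smooth submersion, with horizontal space $\Hcal_{A_0}=\Sym(n)A_0$ at each $A_0\in\GL(n)$, so it suffices to show that, writing $P_0:=\pi_{\alpha}(A_0)$, the restricted differential $D\pi_{\alpha}(A_0)\big|_{\Hcal_{A_0}}\colon\Sym(n)A_0\mapto\bigl(\Sym(n),\la\;,\;\ra_{P_0}\bigr)$ is an isometry. Before that, I would record that Eq.~(\ref{equation:general-metric}) genuinely defines an inner product on $T_{P_0}(\Sym^{++}(n))=\Sym(n)$: it is symmetric because $P_0^{2\alpha}$ and the values of $\Lcal_{P_0,\alpha}$ lie in $\Sym(n)$, and positive definite because $\la Y,Y\ra_{P_0}=4\|P_0^{\alpha}\Lcal_{P_0,\alpha}(Y)\|_F^2$ vanishes only when $\Lcal_{P_0,\alpha}(Y)=0$, which forces $Y=0$ since $\Lcal_{P_0,\alpha}$ is a linear bijection of $\Sym(n)$ by Proposition~\ref{proposition:general-lyapunov}; smoothness of $P_0\mapsto\la\;,\;\ra_{P_0}$ follows from the implicit function theorem applied to the linear system in Eq.~(\ref{equation:general-lyapunov}), whose coefficient operator depends smoothly and invertibly on $P_0$.

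The heart of the argument, parallel to the Wasserstein case $\alpha=1/2$, is an explicit evaluation of $D\pi_{\alpha}(A_0)$ on a horizontal vector $X=SA_0$ with $S\in\Sym(n)$. Using the identities $P_0^{2\alpha}=\alpha^2A_0A_0^{*}$ and $\tfrac{1}{2\alpha}\log(\alpha^2A_0A_0^{*})=\log(P_0)$, together with $XA_0^{*}+A_0X^{*}=SA_0A_0^{*}+A_0A_0^{*}S=\tfrac{1}{\alpha^2}(SP_0^{2\alpha}+P_0^{2\alpha}S)$, the formula for $D\pi_{\alpha}(A_0)$ in Proposition~\ref{proposition:differential} simplifies to
\begin{align*}
D\pi_{\alpha}(A_0)(SA_0)=\frac{1}{2\alpha}\,D\exp(\log(P_0))\compose D\log(P_0^{2\alpha})\bigl(SP_0^{2\alpha}+P_0^{2\alpha}S\bigr).
\end{align*}
Comparing this with Eq.~(\ref{equation:general-lyapunov}), the equation defining $\Lcal_{P_0,\alpha}$, and invoking the uniqueness part of Proposition~\ref{proposition:general-lyapunov}, I would read off $\Lcal_{P_0,\alpha}\bigl(D\pi_{\alpha}(A_0)(SA_0)\bigr)=\tfrac{S}{2\alpha}$. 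Substituting this into Eq.~(\ref{equation:general-metric}) then gives
\begin{align*}
\la D\pi_{\alpha}(A_0)(SA_0),\,D\pi_{\alpha}(A_0)(SA_0)\ra_{P_0}=4\,\trace\bigl(\tfrac{S}{2\alpha}P_0^{2\alpha}\tfrac{S}{2\alpha}\bigr)=\frac{1}{\alpha^2}\trace(SP_0^{2\alpha}S)=\trace(SA_0A_0^{*}S)=\|SA_0\|_F^2.
\end{align*}
Hence $D\pi_{\alpha}(A_0)$ preserves the Frobenius norm on $\Hcal_{A_0}$; since both the pulled-back Frobenius form and $\la\;,\;\ra_{P_0}$ are symmetric bilinear forms on $\Sym(n)$ agreeing on the diagonal, polarization upgrades this to a full isometry. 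Because the computation holds at every $A_0\in\GL(n)$, this proves that $\pi_{\alpha}$ is a Riemannian submersion; as a by-product, injectivity of $S\mapsto SP_0^{2\alpha}+P_0^{2\alpha}S$ re-confirms that $D\pi_{\alpha}(A_0)\big|_{\Hcal_{A_0}}$ is a linear isomorphism onto $\Sym(n)$.

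I expect the only genuine obstacle to be the bookkeeping needed to make this reduction rigorous: checking that $\Lcal_{P_0,\alpha}$ is a linear isomorphism of $\Sym(n)$ varying smoothly with $P_0$ (so that Eq.~(\ref{equation:general-metric}) really is a Riemannian metric), and that $D\log(P_0^{2\alpha})$ and $D\exp(\log P_0)$ may be treated as invertible operators on $\Sym(n)$ --- the latter being precisely what licenses the step from the explicit formula for $D\pi_{\alpha}(A_0)(SA_0)$ to the identity $\Lcal_{P_0,\alpha}(D\pi_{\alpha}(A_0)(SA_0))=S/(2\alpha)$. Once Propositions~\ref{proposition:differential} and~\ref{proposition:general-lyapunov} are available, the isometry itself is a single substitution followed by one uniqueness argument, so no further analytic difficulty arises beyond those already handled.
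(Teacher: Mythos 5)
Your proposal is correct and follows essentially the same route as the paper: both reduce to computing $D\pi_{\alpha}(A_0)(SA_0)=\frac{1}{2\alpha}D\exp(\log P_0)\compose D\log(P_0^{2\alpha})(SP_0^{2\alpha}+P_0^{2\alpha}S)$ on horizontal vectors, identify $\Lcal_{P_0,\alpha}$ of this quantity with $S/(2\alpha)$ via the uniqueness in Proposition~\ref{proposition:general-lyapunov}, and match $\la HA_0,KA_0\ra_F=\frac{1}{\alpha^2}\trace(KP_0^{2\alpha}H)$ against Eq.~(\ref{equation:general-metric}); the paper merely runs the computation as a derivation of the metric while you run it as a verification. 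Your added check that Eq.~(\ref{equation:general-metric}) is positive definite (via $\la Y,Y\ra_{P_0}=4\|P_0^{\alpha}\Lcal_{P_0,\alpha}(Y)\|_F^2$) is a small but welcome supplement that the paper leaves implicit.
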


{\bf Special case: Log-Euclidean metric}. In the limiting case $\alpha = 0$, Eq.(\ref{equation:general-lyapunov}) becomes
\begin{align}
D\exp(\log(P_0))(2H) = Y \equivalent H = \Lcal_{P_0, 0}(Y)& = \frac{1}{2}[D\exp(\log(P_0))]^{-1}Y
\nonumber
\\
& = \frac{1}{2}D\log(P_0)Y.
\end{align}
It follows that for any pair $Y,Z \in \Sym(n)$, we have
\begin{align}
\la Y, Z\ra_{P_0} = \la D\log(P_0)Y, D\log(P_0)W\ra_F.
\end{align}
This is precisely the Log-Euclidean Riemannian metric \cite{LogEuclidean:SIAM2007}.

{\bf Special case: Wasserstein Riemannian metric}. For $\alpha = 1/2$, since $D\exp(\log(P_0) \compose D\log(P_0) = \1_{\Sym(n)}$, Eq.(\ref{equation:general-lyapunov}) reduces to the Lyapunov equation
\begin{align}
HP_0 + P_0H  = Y.
\end{align}
Thus the metric in Eq.(\ref{equation:general-metric}) becomes four times the Wasserstein Riemannian metric given in \cite{Takatsu2011wasserstein}, \cite{Bhatia:2018Bures}, \cite{Malago:WassersteinGaussian2018}.

To obtain the geodesics on $(\Sym^{++}(n), \la \; , \;\ra_{P_0})$, we apply the following result.
\begin{theorem}
	\label{theorem:geodesic-submersion}
	Let $\pi: (\Mcal, g) \mapto (\Ncal, h)$ be a Riemannian submersion between two Riemannian manifolds
	$(\Mcal, g)$, $(\Ncal, h)$. Let $\gamma$ be a geodesic in $(\Mcal, g)$ with $\gamma{'}(0)$ being horizontal.
	Then
	\begin{enumerate}
		\item $\gamma{'}(t)$ is horizontal for all $t$.
		\item $\pi \compose \gamma$ is a geodesic in $(\Ncal, h)$ with the same length as $\gamma$.
	\end{enumerate}
	
\end{theorem}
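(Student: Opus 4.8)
The plan is to deduce both statements from the standard structure theory of Riemannian submersions, the single nontrivial ingredient being that the \emph{horizontal lift of a geodesic of $(\Ncal,h)$ is a geodesic of $(\Mcal,g)$}; everything else is then forced by uniqueness of geodesics together with the fact that $D\pi$ restricts to a linear isometry on the horizontal spaces.

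First I would set up horizontal lifts of curves. Since $\pi$ is a smooth submersion and $g$ is smooth, the vertical distribution $\Vcal = \ker D\pi$ and its $g$-orthogonal complement $\Hcal$ are smooth distributions on $\Mcal$. Given a smooth curve $c$ in $\Ncal$ and a point $p_{0}\in\pi^{-1}(c(t_{0}))$, any curve $\tilde c$ with $\pi\circ\tilde c = c$, $\tilde c(t_{0}) = p_{0}$ and $\tilde c'(t)\in\Hcal_{\tilde c(t)}$ must have $\tilde c'(t)$ equal to the horizontal lift of $c'(t)$ at $\tilde c(t)$, which depends smoothly on position and time; hence such a $\tilde c$ exists and is locally unique by ODE existence and uniqueness. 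I would also record the elementary identity that along any horizontal curve $\eta$ one has $\|\eta'(t)\|_{g} = \|D\pi(\eta'(t))\|_{h} = \|(\pi\circ\eta)'(t)\|_{h}$, so that $\eta$ and $\pi\circ\eta$ have equal length.

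The heart of the argument is a connection identity. Fix a geodesic $c$ of $\Ncal$ with nowhere-zero velocity (the constant case being trivial) and a horizontal lift $\tilde c$; near a given parameter value extend $c'$ to a vector field $X_{*}$ on $\Ncal$ and lift it to the \emph{basic} field $X$ on $\Mcal$ (horizontal and $\pi$-related to $X_{*}$), so that $X\circ\tilde c = \tilde c'$. Applying the Koszul formula on $\Mcal$ and using that for basic fields $Y,Z$ the function $\langle Y,Z\rangle_{g}$ is the pullback of $\langle Y_{*},Z_{*}\rangle_{h}$, hence constant along the fibers, and that the bracket of two basic fields is $\pi$-related to the bracket of their projections, one finds that the horizontal component of $\nabla^{\Mcal}_{X}X$ along $\tilde c$ is the horizontal lift of $\nabla^{\Ncal}_{X_{*}}X_{*} = \nabla^{\Ncal}_{c'}c' = 0$, hence vanishes. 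For the vertical component, a short computation from $\langle X,Z\rangle_{g} = 0$ gives, for vertical $Z$, $\langle\nabla^{\Mcal}_{X}X,Z\rangle_{g} = -\langle X,\nabla^{\Mcal}_{X}Z\rangle_{g} = -\tfrac{1}{2}Z\|X\|_{g}^{2} - \langle X,[X,Z]\rangle_{g}$, and both terms vanish because $\|X\|_{g}^{2}$ is constant on fibers and $[X,Z]$ is again vertical. Thus $\nabla^{\Mcal}_{\tilde c'}\tilde c' = 0$, i.e. $\tilde c$ is a geodesic of $\Mcal$. (Alternatively one may simply invoke O'Neill's fundamental equations of a submersion at this point.) Extracting $\nabla^{\Mcal}_{\tilde c'}\tilde c' = 0$ from the Koszul formula and the properties of basic fields is the main obstacle; the remaining steps are routine applications of ODE uniqueness.

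Finally I would assemble the theorem. Given a geodesic $\gamma$ of $\Mcal$ with $\gamma'(0)$ horizontal, let $c$ be the geodesic of $\Ncal$ with $c(0) = \pi(\gamma(0))$ and $c'(0) = D\pi(\gamma(0))(\gamma'(0))$, and let $\tilde c$ be the horizontal lift of $c$ with $\tilde c(0) = \gamma(0)$. By the previous step $\tilde c$ is a geodesic of $\Mcal$, and $\tilde c'(0)$ is the unique horizontal vector lying over $c'(0)$, which is $\gamma'(0)$ itself since $\gamma'(0)$ is horizontal with $D\pi(\gamma'(0)) = c'(0)$. Hence $\gamma$ and $\tilde c$ are geodesics with the same initial data and therefore agree wherever both are defined; applying the same argument at each parameter value shows that the set of $t$ for which $\gamma'(t)$ is horizontal is open, closed and nonempty in the connected domain of $\gamma$, hence equals it. This gives part (1), and then $\pi\circ\gamma = \pi\circ\tilde c = c$ is a geodesic of $\Ncal$, with $L(\pi\circ\gamma) = L(\gamma)$ by the length identity of the second paragraph, which is part (2).
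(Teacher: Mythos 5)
Your proof is correct. Note that the paper itself gives no proof of this theorem: it is stated as a known classical fact from the theory of Riemannian submersions (due to O'Neill and Hermann) and is applied as a black box in the proof of Theorem \ref{theorem:geodesic}. Your argument is the standard one: the horizontal lift of a base geodesic is a geodesic upstairs (horizontal part of $\nabla_X X$ via the Koszul formula for basic fields, vertical part via $\langle\nabla_X X,Z\rangle=-\tfrac{1}{2}Z\|X\|^2-\langle X,[X,Z]\rangle=0$ since $\|X\|^2$ is constant on fibers and $[X,Z]$ is vertical), followed by ODE uniqueness and an open--closed argument to identify $\gamma$ with the horizontal lift, with the length identity coming from the isometry of $D\pi$ on horizontal spaces. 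All the steps, including the reduction to the nowhere-vanishing-velocity case and the handling of the bracket terms, are sound; this is a complete proof of the result the paper takes for granted.
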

\begin{theorem}
	[\textbf{Geodesic and Riemannian distance}]
	\label{theorem:geodesic}
	Let $\alpha \in \R, \alpha \neq 0$ be fixed.
	Under the Riemannian metric on $\Sym^{++}(n)$ defined in Theorem \ref{theorem:general-metric},
	the Riemannian distance between $A,B \in \Sym^{++}(n)$ is precisely the Alpha Procrustes distance, namely
	\begin{align}
	d(A,B) = \frac{1}{|\alpha|}(\trace[A^{2\alpha} + B^{2\alpha} - 2(A^{\alpha}B^{2\alpha}A^{\alpha})^{1/2}])^{1/2}.
	\end{align}
	Furthermore, the distance $d(A,B)$ is the length of the following geodesic
	\begin{align}
	\gamma(t) =& [(1-t)^2A^{2\alpha} + t^2B^{2\alpha} + t(1-t)[(A^{2\alpha}B^{2\alpha})^{1/2} + (B^{2\alpha}A^{2\alpha})^{1/2}]]^{1/(2\alpha)},
	\nonumber
	\\
	&\gamma(0) = A, \gamma(1) = B.
	\end{align}
\end{theorem}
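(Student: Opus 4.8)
The plan is to realize the claimed curve $\gamma$ as the image under $\pi_{\alpha}$ of a straight-line segment in $(\GL(n),\la\;,\;\ra_F)$ whose initial velocity is horizontal, and then to read off both the geodesic property and the Riemannian distance from Theorem~\ref{theorem:geodesic-submersion} together with the Riemannian submersion property of Theorem~\ref{theorem:general-metric}. Since $(\GL(n),\la\;,\;\ra_F)$ is an open subset of the Euclidean space $(\Mrm(n),\la\;,\;\ra_F)$, its geodesics are exactly the affine segments $c(t)=(1-t)M_0+tM_1$ that stay inside $\GL(n)$, and such a segment has Frobenius length $\|M_1-M_0\|_F$. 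First I would fix the lift $M_0=\frac{1}{|\alpha|}A^{\alpha}$ of $A$; it satisfies $\alpha^2M_0M_0^{*}=A^{2\alpha}$, so $\pi_{\alpha}(M_0)=(A^{2\alpha})^{1/(2\alpha)}=A$. To choose the endpoint over $B$ correctly, I would solve the Riccati-type equation $TA^{2\alpha}T=B^{2\alpha}$ for $T\in\Sym^{++}(n)$; it has the closed-form positive definite solution $T=A^{-\alpha}(A^{\alpha}B^{2\alpha}A^{\alpha})^{1/2}A^{-\alpha}$, and then $M_1:=TM_0$ obeys $\alpha^2M_1M_1^{*}=TA^{2\alpha}T=B^{2\alpha}$, i.e.\ $\pi_{\alpha}(M_1)=B$.

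Next I would take the segment $c(t)=((1-t)\1+tT)M_0$. Because $(1-t)\1+tT$ is a convex combination of two positive definite matrices, it is positive definite, hence invertible, for every $t\in[0,1]$, so $c(t)\in\GL(n)$ throughout and $c$ is a genuine geodesic of $(\GL(n),\la\;,\;\ra_F)$. Its velocity $c{'}(0)=M_1-M_0=(T-\1)M_0$ lies in $\Sym(n)M_0=\Hcal_{M_0}$, the horizontal space determined in Proposition~\ref{proposition:differential}, hence is horizontal; Theorem~\ref{theorem:geodesic-submersion} then gives that $\gamma:=\pi_{\alpha}\compose c$ is a geodesic in $(\Sym^{++}(n),\la\;,\;\ra_{P_0})$ with the same length as $c$, and clearly $\gamma(0)=A$, $\gamma(1)=B$. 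To extract the explicit formula I would expand $\alpha^2c(t)c(t)^{*}=((1-t)\1+tT)A^{2\alpha}((1-t)\1+tT)$ into its four terms: the $t^2$ term is $TA^{2\alpha}T=B^{2\alpha}$, while for the two cross terms I would apply the identity $(XY)^{1/2}=X^{1/2}(X^{1/2}YX^{1/2})^{1/2}X^{-1/2}$ from the Remark with $X=A^{2\alpha}$, $Y=B^{2\alpha}$ to obtain $A^{2\alpha}T=(A^{2\alpha}B^{2\alpha})^{1/2}$ and, by transposing, $TA^{2\alpha}=(B^{2\alpha}A^{2\alpha})^{1/2}$; applying $(\,\cdot\,)^{1/(2\alpha)}$ to the resulting sum yields exactly the stated $\gamma(t)$.

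For the distance, the length of the affine segment $c$ is $\|M_1-M_0\|_F$, and expanding $\|M_1-M_0\|_F^2=\trace(M_1M_1^{*})+\trace(M_0M_0^{*})-2\trace(M_1M_0^{*})$ using $M_0M_0^{*}=A^{2\alpha}/\alpha^2$, $M_1M_1^{*}=B^{2\alpha}/\alpha^2$ and $\trace(M_1M_0^{*})=\trace(TA^{2\alpha})/\alpha^2=\trace((A^{\alpha}B^{2\alpha}A^{\alpha})^{1/2})/\alpha^2$ shows that the length of $\gamma$ equals $\frac{1}{|\alpha|}\left(\trace[A^{2\alpha}+B^{2\alpha}-2(A^{\alpha}B^{2\alpha}A^{\alpha})^{1/2}]\right)^{1/2}$, which by Eq.~(\ref{equation:dproE-finite}) is $d^{\alpha}_{\proE}(A,B)$. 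This already gives $d(A,B)\le d^{\alpha}_{\proE}(A,B)$. For the reverse inequality I would take an arbitrary piecewise-smooth curve $\delta$ from $A$ to $B$ in $\Sym^{++}(n)$ and lift it horizontally to a curve $\tilde{\delta}$ in $\GL(n)$ with $\tilde{\delta}(0)=M_0$; such a lift exists on all of $[0,1]$ because the fiber of $\pi_{\alpha}$ through a point is the right orbit $\{\frac{1}{|\alpha|}C^{\alpha}V:V\in\Ubb(n)\}$ of the compact group $\Ubb(n)$, so horizontality reduces to an ODE on $\Ubb(n)$ whose solutions do not escape in finite time. Since $D\pi_{\alpha}$ is an isometry on horizontal spaces, the length of $\delta$ equals that of $\tilde{\delta}$, which is at least $\|\tilde{\delta}(1)-M_0\|_F$; and because $\tilde{\delta}(1)=\frac{1}{|\alpha|}B^{\alpha}W$ for some $W\in\Ubb(n)$, this is at least $\min_{W\in\Ubb(n)}\frac{1}{|\alpha|}\|A^{\alpha}-B^{\alpha}W\|_F=d^{\alpha}_{\proE}(A,B)$. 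Taking the infimum over $\delta$ gives $d(A,B)\ge d^{\alpha}_{\proE}(A,B)$, hence equality. I expect the main obstacle to be the careful justification of this global horizontal lift — that is, the path-lifting property of the Riemannian submersion even though $(\GL(n),\la\;,\;\ra_F)$ is not complete — together with the algebraic bookkeeping needed to collapse the cross terms into $(A^{2\alpha}B^{2\alpha})^{1/2}+(B^{2\alpha}A^{2\alpha})^{1/2}$ uniformly for both signs of $\alpha$.
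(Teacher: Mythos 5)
Your proposal is correct and follows essentially the same route as the paper: lift $A$ to $\frac{1}{|\alpha|}A^{\alpha}$, join it by a straight segment with horizontal velocity to the point $\frac{1}{|\alpha|}TA^{\alpha}$ over $B$ (your Riccati solution $T=A^{-\alpha}(A^{\alpha}B^{2\alpha}A^{\alpha})^{1/2}A^{-\alpha}$ is exactly the paper's geometric mean $A^{-2\alpha}\,\#\,B^{2\alpha}$), and invoke Theorem~\ref{theorem:geodesic-submersion} plus a horizontal-lift argument for minimality. The only differences are cosmetic: you derive $T$ from the Riccati equation rather than from the polar decomposition of $B^{\alpha}A^{\alpha}$, and you spell out the length computation and the path-lifting step that the paper delegates to \cite{Bhatia:2018Bures}.
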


\section{Infinite-dimensional setting}
\label{section:infinite-dim}

We now generalize the results in Section \ref{section:finite-dim} to the infinite-dimensional setting.
Throughout the following, let $\H$ denote a real, separable, infinite-dimensional Hilbert space, unless
explicitly stated otherwise. Let $\L(\H)$ denote the set of bounded linear operators on $\H$, $\Sym(\H) \subset \L(\H)$
the set of bounded, self-adjoint operators, $\Sym^{+} \subset \Sym(\H)$ and $\Sym^{++}(\H) \subset \Sym^{+}(\H)$ denote, respectively,
the sets of positive and strictly positive operators on $\H$. 
Let $\Ubb(\H)$ denote the set of unitary operators on $\H$ and $\Tr(\H)$ and $\HS(\H)$ denote the sets of
trace class and Hilbert-Schmidt operators on $\H$, respectively.

In the case $A,B$ are two positive trace class operators
on $\H$,
we have
\begin{theorem}
	[\cite{Gelbrich:1990Wasserstein,Masarotto:2018Procrustes}]
	\label{theorem:Bures-traceclass}
	Let $A,B \in \Sym^{+}(\H) \cap \Tr(\H)$ be fixed. Then
	\begin{align}
	\min_{U \in \Ubb(\H)}||A^{1/2} - B^{1/2}U||^2_{\HS} &= \trace[A + B - 2(A^{1/2}BA^{1/2})^{1/2}] 
	\\
	&=d^2_{\Wass}(\Ncal(0,A),\Ncal(0,B)).
	\end{align}
\end{theorem}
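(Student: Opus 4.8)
The plan is to establish the two claimed equalities separately. The second equality, namely that $\trace[A+B-2(A^{1/2}BA^{1/2})^{1/2}] = d^2_{\Wass}(\Ncal(0,A),\Ncal(0,B))$, is the infinite-dimensional Gelbrich formula for the $\L^2$-Wasserstein distance between centered Gaussian measures on $\H$ with covariance operators $A,B$; I would simply cite \cite{Gelbrich:1990Wasserstein}, where it is proved that the optimal transport cost between two Gaussian measures on a separable Hilbert space is given by this trace expression. Since $A,B$ are trace class and positive, all the operators appearing — $A^{1/2}$, $B^{1/2}$, $A^{1/2}BA^{1/2}$, and its square root — are trace class, so every trace is finite and the right-hand side is well-defined.

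For the first equality, the Procrustes identity $\min_{U \in \Ubb(\H)}\|A^{1/2}-B^{1/2}U\|^2_{\HS} = \trace[A+B-2(A^{1/2}BA^{1/2})^{1/2}]$, I would expand the Hilbert-Schmidt norm: $\|A^{1/2}-B^{1/2}U\|^2_{\HS} = \|A^{1/2}\|^2_{\HS} + \|B^{1/2}U\|^2_{\HS} - 2\la A^{1/2}, B^{1/2}U\ra_{\HS}$. The first term is $\trace(A)$ and, since $U$ is unitary, the second is $\trace(UB U^*) = \trace(B)$, both independent of $U$. Thus the problem reduces to maximizing $\la A^{1/2}, B^{1/2}U\ra_{\HS} = \trace(A^{1/2}B^{1/2}U^*)$ over $U \in \Ubb(\H)$. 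Writing the polar decomposition of the trace class operator $B^{1/2}A^{1/2}$ and applying the operator analogue of von Neumann's trace inequality — for a trace class operator $T$, $\max_{U \in \Ubb(\H)}|\trace(TU)| = \trace|T| = \trace(T^*T)^{1/2}$, the maximum being attained at $U$ equal to (the adjoint of) the partial isometry in the polar decomposition of $T$, extended to a unitary — one gets $\max_U \trace(A^{1/2}B^{1/2}U^*) = \trace|B^{1/2}A^{1/2}| = \trace\big((A^{1/2}BA^{1/2})^{1/2}\big)$, using $|B^{1/2}A^{1/2}| = ((B^{1/2}A^{1/2})^*(B^{1/2}A^{1/2}))^{1/2} = (A^{1/2}BA^{1/2})^{1/2}$. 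Substituting back yields the claimed formula. One subtlety is that in infinite dimensions the polar-decomposition partial isometry need not itself be unitary; I would handle this by extending it to a unitary on $\H$ (possible after, if necessary, passing to the kernel/cokernel, or by an approximation argument showing the supremum is still the right-hand side even if not attained), and note that positivity of $A,B$ combined with trace-class-ness makes the relevant ranges dense enough for this extension to work — this is exactly the point treated in \cite{Masarotto:2018Procrustes}.

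The main obstacle is precisely this attainment/extension issue in the unitary Procrustes problem: unlike the finite-dimensional case where a unitary completion is automatic, here one must verify that the infimum over $\Ubb(\H)$ equals $\trace[A+B-2(A^{1/2}BA^{1/2})^{1/2}]$ and is attained (or is at least the correct infimum), which requires care with the partial isometry from the polar decomposition of $B^{1/2}A^{1/2}$ and with convergence of the Hilbert-Schmidt expansions. Since both the statement and its proof are established in \cite{Gelbrich:1990Wasserstein} and \cite{Masarotto:2018Procrustes}, the cleanest route is to invoke those references for the full argument, having laid out above the computational skeleton that reduces everything to von Neumann's trace inequality for trace class operators and the identity $|B^{1/2}A^{1/2}| = (A^{1/2}BA^{1/2})^{1/2}$.
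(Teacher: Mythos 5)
Your proposal is sound: the paper itself offers no proof of this theorem, stating it as a known result imported from \cite{Gelbrich:1990Wasserstein} and \cite{Masarotto:2018Procrustes}, which is exactly where your argument also ultimately defers. Your computational skeleton --- expanding the Hilbert--Schmidt norm, reducing to maximizing $\trace(U^{*}B^{1/2}A^{1/2})$ via von Neumann's trace inequality, and using $|B^{1/2}A^{1/2}| = (A^{1/2}BA^{1/2})^{1/2}$, with the correctly flagged subtlety about extending the polar partial isometry to a unitary in infinite dimensions --- is the standard route taken in those references.
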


\begin{corollary}
	\label{corollary:Bures-alpha}
	Let $\alpha \in \R, \alpha > 0$ be fixed. Let $A,B \in \Sym^{+}(\H)$ be fixed, such that $A^{\alpha}, B^{\alpha} \in \HS(\H)$. Then
	\begin{align}
	\label{equation:Bures-alpha}
	\min_{U \in \Ubb(\H)}||A^{\alpha} - B^{\alpha}U||^2_{\HS} = \trace[A^{2\alpha} + B^{2\alpha} - 2(A^{\alpha}B^{2\alpha}A^{\alpha})^{1/2}].
	\end{align}
\end{corollary}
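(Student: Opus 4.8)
The plan is to reduce Corollary \ref{corollary:Bures-alpha} to Theorem \ref{theorem:Bures-traceclass} by a change of variables. Set $\tilde{A} = A^{2\alpha}$ and $\tilde{B} = B^{2\alpha}$. The key observation is that $A^{\alpha} = \tilde{A}^{1/2}$ and $B^{\alpha} = \tilde{B}^{1/2}$ as positive operators (since $A, B \in \Sym^{+}(\H)$, all the fractional powers are defined via the spectral theorem and behave multiplicatively: $(A^{2\alpha})^{1/2} = A^{\alpha}$), and by hypothesis $\tilde{A}^{1/2} = A^{\alpha} \in \HS(\H)$, so $\tilde{A} = (A^{\alpha})^2 \in \Tr(\H)$, because the square of a Hilbert-Schmidt operator is trace class. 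Likewise $\tilde{B} \in \Tr(\H)$, and both are positive, so $\tilde{A}, \tilde{B} \in \Sym^{+}(\H) \cap \Tr(\H)$, which is exactly the hypothesis required by Theorem \ref{theorem:Bures-traceclass}.

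First I would carry out this reduction carefully, checking that $A^{\alpha}$ is self-adjoint (it is, being a positive operator) so that $\|A^{\alpha} - B^{\alpha}U\|_{\HS}$ is well-defined with $A^{\alpha}, B^{\alpha}U \in \HS(\H)$ (the second because $B^{\alpha} \in \HS(\H)$ and $U$ is bounded, and $\HS(\H)$ is a two-sided ideal). Then applying Theorem \ref{theorem:Bures-traceclass} to the pair $\tilde{A}, \tilde{B}$ gives
\begin{align*}
\min_{U \in \Ubb(\H)} \|\tilde{A}^{1/2} - \tilde{B}^{1/2}U\|^2_{\HS} = \trace[\tilde{A} + \tilde{B} - 2(\tilde{A}^{1/2}\tilde{B}\tilde{A}^{1/2})^{1/2}].
\end{align*}
Substituting back $\tilde{A}^{1/2} = A^{\alpha}$, $\tilde{A} = A^{2\alpha}$, $\tilde{B}^{1/2} = B^{\alpha}$, $\tilde{B} = B^{2\alpha}$ yields exactly Eq.(\ref{equation:Bures-alpha}), since $\tilde{A}^{1/2}\tilde{B}\tilde{A}^{1/2} = A^{\alpha}B^{2\alpha}A^{\alpha}$.

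I do not expect a serious obstacle here; the only point requiring a little care is the functional-calculus identity $(A^{2\alpha})^{1/2} = A^{\alpha}$ for general positive (possibly non-injective, possibly unbounded-spectrum-free but bounded) operators $A \in \Sym^{+}(\H)$, which follows from the spectral theorem applied pointwise on the spectrum since $t \mapsto (t^{2\alpha})^{1/2} = t^{\alpha}$ for $t \geq 0$ and $\alpha > 0$. One should also note that $A$ itself need not be trace class or even Hilbert-Schmidt — only the power $A^{\alpha}$ is assumed Hilbert-Schmidt — so the reduction genuinely needs to be phrased in terms of $\tilde{A} = A^{2\alpha}$ rather than $A$; this is the reason the statement is a corollary of the trace-class theorem rather than a direct instance of it. Finiteness of the trace on the right-hand side is automatic once $\tilde{A}, \tilde{B} \in \Tr(\H)$, and $(\tilde{A}^{1/2}\tilde{B}\tilde{A}^{1/2})^{1/2}$ is trace class because $\tilde{A}^{1/2}\tilde{B}\tilde{A}^{1/2}$ is a product of a Hilbert-Schmidt operator with a trace-class operator with a Hilbert-Schmidt operator, hence trace class, and its square root is then trace class as well.
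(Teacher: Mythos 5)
Your proposal is correct and is exactly the argument the paper intends: the corollary is an immediate substitution $A \mapsto A^{2\alpha}$, $B \mapsto B^{2\alpha}$ into Theorem \ref{theorem:Bures-traceclass}, with the hypothesis $A^{\alpha}, B^{\alpha} \in \HS(\H)$ serving precisely to guarantee $A^{2\alpha}, B^{2\alpha} \in \Tr(\H)$. The functional-calculus identity $(A^{2\alpha})^{1/2} = A^{\alpha}$ and the trace-class bookkeeping you spell out are the right (and only) points of care.
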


While
Eq.(\ref{equation:Bures-alpha}) is valid for any pair $A, B \in \Sym^{+}(\H)$
such that $A^{\alpha}, B^{\alpha} \in \HS(\H)$, in general the limit 
$\lim_{\alpha \approach 0}\frac{1}{\alpha} \min_{U \in \Ubb(\H)}||A^{\alpha} - B^{\alpha}U||_{\HS}$
does not exist in a form similar to Eq.(\ref{equation:limit-alpha-0-finite}), since $\log(A)$ is unbounded even when $A$ is strictly positive.
To obtain the infinite-dimensional generalization of Theorem
\ref{theorem:limit-alpha-0-finite}, we consider the setting
of positive definite unitized Hilbert-Schmidt operators. Let us denote by $\bP(\H)$ the set
of self-adjoint, positive definite operators on $\H$
$\bP(\H) = \{A \in \L(\H), A^{*} = A, \exists M_A > 0 \; s.t. \la x, Ax\ra \geq M_A||x||^2 \; \forall x \in \H\}$.
We write $A > 0 \equivalent A \in \bP(\H)$.
We recall that in \cite{Larotonda:2007}, the author defined the set of {\it extended (or unitized) Hilbert-Schmidt operators} 
on $\H$ to be
\begin{align}
\HS_X(\H) = \{ A + \gamma I : A \in \HS(\H), \gamma \in \R\},
\end{align}
which becomes a Hilbert space under the {\it extended Hilbert-Schmidt inner product}
\begin{align}
\la (A+\gamma I), (B+ \nu I)\ra_{\eHS} = \la A,B\ra_{\HS} +\gamma \nu = \trace(A^{*}B) + \gamma\nu.
\end{align}
The set of positive definite unitized (or extended) Hilbert-Schmidt operators,
which is an infinite-dimensional generalization
of $\Sym^{++}(n)$,
 is defined to be
\begin{align}
\PC_2(\H) = \Pb(\H) \cap \HS_X(\H) = \{A+\gamma I > 0: A \in \HS(\H), \gamma \in \R\}.
\end{align}
Similarly, in \cite{Minh:LogDet2016}, the set of {\it extended (or unitized) trace class operators} is defined to be
\begin{align}
\Tr_X(\H) = \{ A + \gamma I: A \in \Tr(\H), \gamma \in \R\},
\end{align}
and the set of positive definite unitized (or extended) trace class operators is defined to be
\begin{align}
\PC_1(\H) = \Pb(\H) \cap \Tr_X(\H) = \{A+\gamma I > 0: A \in \Tr(\H), \gamma \in \R\}.
\end{align}
In particular, if $A$ is a covariance operator on $\H$, then $(A+\gamma I) \in \PC_1(\H)$ for any $\gamma \in \R, \gamma > 0$.

The set $\PC_2(\H)$ is a Hilbert manifold on which one can define the infinite-dimensional generalizations of the affine-invariant Riemannian metric \cite{Larotonda:2007}
and 
the Log-Determinant divergences \cite{Minh:LogDet2016,Minh:LogDetIII2018}.
In \cite{MinhSB:NIPS2014}, we introduced the Log-Hilbert-Schmidt distance on $\PC_2(\H)$,
which generalizes the Log-Euclidean distance on $\Sym^{++}(n)$. 
For $(A+\gamma I), (B +\nu I) \in \PC_2(\H)$, 
this distance is defined by
\begin{align}
d_{\logHS}[(A+\gamma I), (B+ \nu I)] = ||\log(A+\gamma I) - \log(B+\nu I)||_{\eHS}.
\end{align}

We next define a family of distances that includes both the infinite-dimensional Bures-Wasserstein
and Log-Hilbert-Schmidt distances as special cases.

\subsection{The case $\gamma = \nu =1$}
\label{section:gamma-nu-1}

For a fixed $\gamma  \in \R, \gamma > 0$, we consider the following subset of $\PC_2(\H)$
\begin{align}
\PC_2(\H)(\gamma) = \{ A+\gamma I > 0 : A \in \HS\} \subset \PC_2(\H).
\end{align}
We first generalize the Alpha Procrustes distance in Definition \ref{definition:alpha-procrustes-finite}
to the set 
\begin{align}
\PC_2(\H)(1) = \{ I+A > 0 : A \in \HS\} \subset \PC_2(\H).
\end{align}
We first note that for any $(I+A) \in \PC_2(\H)(1)$, 
$(I+A)^{\alpha} = \exp[\alpha\log(I+A)] = I + \sum_{k=1}^{\infty}\frac{\alpha^k}{k!}[\log(I+A)]^k  = I+C$
where $C \in \Sym(\H) \cap \HS(\H)$, since
$||C||_{\HS} \leq \sum_{k=1}^{\infty}\frac{|\alpha|^k}{k!}||\log(I+A)||_{\HS}^k  = \exp(||\log(I+A)||_{\HS}) - 1 < \infty$.

\begin{proposition}
	\label{proposition:limit-UV-same}
	Let $(I+A), (I+B) \in \PC_2(\H)(1)$ and $\alpha \in \R$ be fixed. Then 	
\begin{align}
&\min_{(I+U), (I+V) \in \Ubb(\H) \cap \HS_X(\H)}||(I+A)^{\alpha}(I+U) - (I+B)^{\alpha}(I+V)||_{\HS}
\label{equation:limit-UV-same}
\\
= &\min_{(I+V) \in \Ubb(\H) \cap \HS_X(\H)}||(I+A)^{\alpha} - (I+B)^{\alpha}(I+V)||_{\HS}.
\label{equation:limit-V-same}
\end{align}
\end{proposition}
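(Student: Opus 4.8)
The plan is to reduce the two-sided minimization in \eqref{equation:limit-UV-same} to the one-sided minimization in \eqref{equation:limit-V-same} by showing that inserting a second unitary factor $(I+U)$ on the $(I+A)^{\alpha}$ side cannot help. First I would observe that the quantity on the left of \eqref{equation:limit-UV-same} is invariant under replacing $(I+U)$ by $I$ and $(I+V)$ by $(I+U)^{-1}(I+V)$: since both $I+U$ and $I+V$ are unitary operators of the form $I + (\text{Hilbert-Schmidt})$, their product $(I+U)^{-1}(I+V)$ is again in $\Ubb(\H)\cap\HS_X(\H)$, and multiplying the argument of the Hilbert-Schmidt norm on the right by the unitary $(I+U)^{-1}$ preserves $\|\cdot\|_{\HS}$. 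This already shows the left side is $\ge$ the right side (and the reverse inequality is trivial since $(I+U)=I$ is admissible on the left), so the two are equal.

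The technical point to verify carefully is that the family of admissible factors is genuinely a group under composition: if $I+U$ is unitary with $U\in\HS$, then $(I+U)^{-1} = (I+U)^{*} = I+U^{*}$ with $U^{*}\in\HS$, so $(I+U)^{-1}\in\Ubb(\H)\cap\HS_X(\H)$; and if $I+V$ is another such operator, then $(I+U)(I+V) = I + (U+V+UV)$ with $U+V+UV\in\HS(\H)$ since $\HS(\H)$ is a two-sided ideal in $\L(\H)$. Hence $(I+U)^{-1}(I+V)$ ranges over all of $\Ubb(\H)\cap\HS_X(\H)$ as $(I+V)$ does, for each fixed $(I+U)$. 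I would also note explicitly, using the computation already recorded in the excerpt, that $(I+A)^{\alpha} = I+C$ and $(I+B)^{\alpha} = I+D$ with $C,D\in\Sym(\H)\cap\HS(\H)$, so that the difference $(I+A)^{\alpha}(I+U) - (I+B)^{\alpha}(I+V)$ is always a Hilbert-Schmidt operator and the norm $\|\cdot\|_{\HS}$ is finite on the whole feasible set; this makes the infimum well-posed.

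Concretely, the argument runs: for any admissible pair $(I+U),(I+V)$, set $(I+W) = (I+U)^{-1}(I+V) \in \Ubb(\H)\cap\HS_X(\H)$; then
\begin{align}
\|(I+A)^{\alpha}(I+U) - (I+B)^{\alpha}(I+V)\|_{\HS}
&= \|\big[(I+A)^{\alpha} - (I+B)^{\alpha}(I+W)\big](I+U)\|_{\HS} \nonumber\\
&= \|(I+A)^{\alpha} - (I+B)^{\alpha}(I+W)\|_{\HS},
\end{align}
using unitary invariance of the Hilbert-Schmidt norm under right multiplication by $I+U$. Taking the infimum over $(I+W)$ on the right gives that the left-hand side of \eqref{equation:limit-UV-same} is bounded below by the right-hand side \eqref{equation:limit-V-same}; conversely, choosing $(I+U) = I$ shows it is bounded above by the same quantity. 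Therefore the two minima coincide. I do not anticipate a serious obstacle here — the only thing that needs genuine care is the ideal/group bookkeeping for the unitized operators, ensuring everything stays within $\HS_X(\H)$ so that the infinite-dimensional argument parallels the trivial finite-dimensional identity in Definition \ref{definition:alpha-procrustes-finite}.
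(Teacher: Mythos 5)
Your proof is correct and takes a genuinely different, more structural route than the paper. The paper proceeds computationally: it expands both squared norms into traces (using Lemma \ref{lemma:I+A-trace} to justify that the relevant operators are trace class) and observes that the two resulting expressions, $\trace[(I+A)^{2\alpha} + (I+B)^{2\alpha} - (I+V)(I+U^{*})(I+A)^{\alpha}(I+B)^{\alpha} - (I+U)(I+V^{*})(I+B)^{\alpha}(I+A)^{\alpha}]$ and its one-sided analogue, coincide once $(I+V)(I+U^{*})$ is relabelled as a single element of $\Ubb(\H)\cap\HS_X(\H)$. You instead exploit exactly the two facts that make the finite-dimensional reduction trivial --- that $\Ubb(\H)\cap\HS_X(\H)$ is a group (closed under adjoints/inverses and products, via the ideal property of $\HS(\H)$) and that $\|T W\|_{\HS}=\|T\|_{\HS}$ for $T\in\HS(\H)$ and $W$ unitary --- and you correctly verify that the difference being measured is a genuine Hilbert--Schmidt operator so that this invariance applies. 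Your version is shorter and makes the mechanism more transparent; the paper's version has the side benefit that the intermediate trace identities are reused verbatim in the proofs of Theorem \ref{theorem:Pdistance-AI-HS} and Theorem \ref{theorem:metric-family-infinite-HS}. One small slip to fix: with your factorization $(I+A)^{\alpha}(I+U) - (I+B)^{\alpha}(I+V) = \bigl[(I+A)^{\alpha} - (I+B)^{\alpha}(I+W)\bigr](I+U)$ you need $(I+W)(I+U) = (I+V)$, i.e.\ $(I+W) = (I+V)(I+U)^{-1}$, not $(I+U)^{-1}(I+V)$; since the operators do not commute the order matters, though the corrected choice still lies in $\Ubb(\H)\cap\HS_X(\H)$ and still sweeps out the whole set as $(I+V)$ varies, so the argument survives unchanged.
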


Let us briefly motivate the operators  of the form
$(I+U) \in \Ubb(\H) \cap \HS_X(\H)$ in Proposition \ref{proposition:limit-UV-same}. We first note
that if $U \in \Ubb(\H)$ and $(I+A) \in \HS_X(\H)$, then we generally do not have
$(I+A)U = U + AU\in \HS_X(\H)$. If we consider operators of the form $(I+U) \in \Ubb(\H) \cap \HS_X(\H)$, then
$(I+A)(I+U) = I + A + U + AU \in \HS_X(\H)$ since $U \in \HS(\H)$.
Furthermore, for $(I+A), (I+B), (I+U), (I+V)\in \HS_X(\H)$, we have
$(I+A)(I+U) - (I+B)(I+V) \in \HS(\H)$,
so that the expressions in Eqs.(\ref{equation:limit-UV-same}) and (\ref{equation:limit-V-same}) are both well-defined and finite.
The form $I+U \in \Ubb(\H) \cap \HS_X(\H)$ is also motivated by the following
polar decomposition.

\begin{lemma}
	\label{lemma:polar-decomp-HSX}
	Let $(A+\gamma I) \in \HS_X(\H)$ be invertible. Then its polar decomposition has the form
	\begin{align}
	A+\gamma I = S|A+\gamma I| = (I+R)|A+\gamma I|,
	\end{align}
	where $S = I+R \in \Ubb(\H) \cap \HS_X(\H)$.
	If, furthermore, $(A+\gamma I) \in \Tr_X(\H)$, then $S = (I+R) \in \Ubb(\H) \cap \Tr_X(\H)$.
\end{lemma}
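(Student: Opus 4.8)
The plan is to reduce everything to the classical polar decomposition of an invertible bounded operator and then to track the Hilbert-Schmidt (resp.\ trace class) remainders. Write $T = A + \gamma I$ and suppose $T$ is invertible. First note that $\gamma \neq 0$: otherwise $T = A$ would be a compact invertible operator on an infinite-dimensional Hilbert space, which is impossible. We treat the case $\gamma > 0$ (the only one relevant below, where in fact $\gamma = 1$); the case $\gamma < 0$ is handled identically after replacing $T$ by $-T$. By the classical polar decomposition, $T = S|T|$ with $|T| = (T^{*}T)^{1/2}$ and $S \in \Ubb(\H)$, uniquely determined by $S = T|T|^{-1}$ since $T$, hence $|T|$, is invertible. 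Since $|T| \ge 0$ and $\gamma > 0$, the operator $|T| + \gamma I \ge \gamma I$ is invertible with $\|(|T| + \gamma I)^{-1}\| \le 1/\gamma$.

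The heart of the matter is to show $|T| - \gamma I \in \HS(\H)$. As $|T|$ commutes with $\gamma I$,
\begin{align}
(|T| - \gamma I)(|T| + \gamma I) = |T|^{2} - \gamma^{2} I = T^{*}T - \gamma^{2} I = A^{*}A + \gamma(A + A^{*}),
\end{align}
and the right-hand side lies in $\HS(\H)$ since $A, A^{*} \in \HS(\H)$ and $A^{*}A \in \HS(\H)$. Multiplying on the right by the bounded operator $(|T| + \gamma I)^{-1}$ and invoking the fact that $\HS(\H)$ is a two-sided ideal in $\L(\H)$ yields $|T| - \gamma I \in \HS(\H)$; write $|T| = \gamma I + R_{1}$ with $R_{1} \in \HS(\H)$. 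Next, from $\tfrac{1}{\gamma} I - |T|^{-1} = \tfrac{1}{\gamma}\,|T|^{-1}(|T| - \gamma I) = \tfrac{1}{\gamma}\,|T|^{-1}R_{1}$ and the ideal property again, $|T|^{-1} = \tfrac{1}{\gamma} I + R_{2}$ with $R_{2} \in \HS(\H)$. Consequently
\begin{align}
S = T|T|^{-1} = (A + \gamma I)\Bigl(\tfrac{1}{\gamma} I + R_{2}\Bigr) = I + \Bigl(\tfrac{1}{\gamma} A + \gamma R_{2} + A R_{2}\Bigr) =: I + R,
\end{align}
with $R \in \HS(\H)$, and since $S$ is unitary this gives $S = I + R \in \Ubb(\H) \cap \HS_X(\H)$.

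For the final assertion I would rerun the same computation with $\HS$ replaced by $\Tr$. If $A \in \Tr(\H)$, then $A, A^{*} \in \Tr(\H)$ and $A^{*}A \in \Tr(\H)$ (a product of two Hilbert-Schmidt operators), so $T^{*}T - \gamma^{2} I \in \Tr(\H)$; since $\Tr(\H)$ is likewise a two-sided ideal in $\L(\H)$, the two displayed identities give in turn $R_{1} \in \Tr(\H)$, then $R_{2} \in \Tr(\H)$, and finally $R = \tfrac{1}{\gamma} A + \gamma R_{2} + A R_{2} \in \Tr(\H)$, i.e.\ $S = I + R \in \Ubb(\H) \cap \Tr_X(\H)$. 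The one delicate point is the commuting factorization of $|T|^{2} - \gamma^{2} I$ combined with the invertibility and norm bound of $|T| + \gamma I$; granting these, the operator-ideal structure of $\HS(\H)$ and $\Tr(\H)$ does the rest, and it is essential to read off the remainder from $|T|^{-1}$, rather than from $|T|$, when assembling $S = T|T|^{-1}$.
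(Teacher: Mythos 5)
Your argument is correct for $\gamma>0$ and reaches the same conclusion as the paper, but the key step is carried out by a genuinely different, more elementary route. The paper works with $\gamma=1$: it observes that $(I+A^{*})(I+A)=I+(A+A^{*}+A^{*}A)\in\PC_2(\H)$ and then relies on the fact, established at the start of Section \ref{section:gamma-nu-1} via the exponential series, that real powers of elements of $\PC_2(\H)(1)$ are again of the form $I+C$ with $C\in\HS(\H)$; applying this with exponents $1/2$ and $-1$ gives $|I+A|^{-1}=I+E$ with $E\in\HS(\H)$, hence $S=(I+A)(I+E)=I+R$, and the general $\gamma$ is recovered by rescaling. You avoid the functional calculus of the square root and the inverse entirely: the commuting factorization $(|T|-\gamma I)(|T|+\gamma I)=T^{*}T-\gamma^{2}I=A^{*}A+\gamma(A+A^{*})\in\HS(\H)$, together with the invertibility of $|T|+\gamma I$ and the two-sided ideal property of $\HS(\H)$, gives $|T|-\gamma I\in\HS(\H)$ directly, and the analogous identity $\gamma^{-1}I-|T|^{-1}=\gamma^{-1}|T|^{-1}(|T|-\gamma I)$ handles the inverse. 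This is self-contained, works for arbitrary $\gamma>0$ without the rescaling step, and transfers verbatim to the trace-class assertion; the paper's version is shorter only because it reuses the earlier series computation for $(I+A)^{\alpha}$.

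One caveat: your remark that $\gamma<0$ is ``handled identically after replacing $T$ by $-T$'' does not actually recover the form $S=I+R$ with $R\in\HS(\H)$. Writing $-T=S'|T|$ with $S'=I+R'$ gives $T=(-S')|T|$, and $-S'=-I-R'$ lies in $\Ubb(\H)\cap\HS_X(\H)$ but is not the identity plus a Hilbert--Schmidt operator (take $A=0$, $\gamma=-1$, so that $S=-I$). Thus the literal ``$I+R$'' normalization in the lemma requires $\gamma>0$. Since the lemma is only ever invoked with $\gamma=1$ (for products such as $(I+B)^{\alpha}(I+A)^{\alpha}$), and the paper's own rescaling argument carries the same implicit sign restriction, this has no effect on anything downstream; it is just worth stating the restriction rather than claiming the negative case follows identically.
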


Motivated by Proposition \ref{proposition:limit-UV-same}, the following is our definition
for the Alpha Procrustes distance between operators of the form $(I+A)> 0$, $A \in \HS(\H)$.

\begin{definition}
	[\textbf{Alpha Procrustes distance between positive definite Hilbert-Schmidt operators - special case}]
	\label{definition:alpha-procrustes-infinite-1}
	Let $\alpha \in \R, \alpha \neq 0$ be fixed.
The $\alpha$-Procrustes distance 
between
$(I+A), (I+B) \in \PC_2(\H)(1)$ is defined to be	
\begin{align}
\label{equation:alpha-procrustes-infinite-1}
d^{\alpha}_{\proHS}[(I+A), (I+B)]= \min_{(I+U) \in \Ubb(\H) \cap \HS_X(\H)}\left\|\frac{(I+A)^{\alpha} - (I+B)^{\alpha}(I+U)}{\alpha}\right\|_{\HS}.
\end{align}
\end{definition}

To state the explicit expressions for $d^{\alpha}_{\proHS}[(I+A), (I+B)]$, as defined in Eq.(\ref{equation:alpha-procrustes-infinite-1}), we first need the following result.


\begin{proposition}
	\label{proposition:HS-to-Tr-alpha}
	Let $(I+A), (I+B) \in \PC_2(\H)$. Let $\alpha \in \R$ be fixed. Then
	\begin{align}
	(I+A)^{2\alpha} + (I+B)^{2\alpha} - 2[(I+A)^{\alpha}(I+B)^{2\alpha}(I+A)^{\alpha}]^{1/2} \in \Tr(\H),
	\\
	[(I+A)^{\alpha}(I+B)^{2\alpha}(I+A)^{\alpha}]^{1/2} - (I+A)^{\alpha} - (I+B)^{\alpha} + I \in \Tr(\H).
	\end{align}
\end{proposition}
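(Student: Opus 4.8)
The plan is to reduce both statements to one estimate about the square root of an operator of the form $I + W$ with $W \in \HS(\H)$ self-adjoint, together with the observation that a scalar function with a double zero at the origin, applied to a Hilbert--Schmidt operator, produces a trace class operator.

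\emph{Algebraic setup.} Write $X = (I+A)^{\alpha}$ and $Y = (I+B)^{\alpha}$. By the computation preceding Proposition~\ref{proposition:limit-UV-same}, $X = I + C$ and $Y = I + D$ with $C, D \in \Sym(\H) \cap \HS(\H)$; by the spectral theorem $(I+A)^{2\alpha} = X^{2} = I + 2C + C^{2}$ and $(I+B)^{2\alpha} = Y^{2} = I + 2D + D^{2}$. Since a product of two Hilbert--Schmidt operators is trace class, $C^{2}, D^{2} \in \Tr(\H)$, so $X^{2} = 2X - I + C^{2}$ and $Y^{2} = 2Y - I + D^{2}$ differ from $2X-I$ and $2Y-I$ by trace class operators. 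Moreover $XY^{2}X = (YX)^{*}(YX)$ is positive definite (as $X, Y$ are invertible), so $(XY^{2}X)^{1/2}$ is well defined; expanding $XY^{2}X = (I+C)(I+2D+D^{2})(I+C)$, every term other than $I$, $2C$, $2D$ is a product of at least two Hilbert--Schmidt factors and hence lies in $\Tr(\H)$. Thus $XY^{2}X = I + W$ with $W \in \Sym(\H) \cap \HS(\H)$ and $W - 2(C+D) \in \Tr(\H)$.

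\emph{The square root estimate.} I claim that for any self-adjoint $W \in \HS(\H)$ with $I+W$ positive definite, $(I+W)^{1/2} = I + \tfrac12 W + R$ with $R \in \Tr(\H)$. Set $h(t) = \sqrt{1+t} - 1 - \tfrac12 t$, which is real analytic on $(-1,\infty)$ with $h(0) = h'(0) = 0$; hence $\tilde h(t) = h(t)/t^{2}$, defined at $0$ by continuity, is continuous and therefore bounded on the compact spectrum $\sigma(W) \subset (-1,\infty)$. By the continuous functional calculus $h(W) = W^{2}\tilde h(W)$, and since $W^{2} \in \Tr(\H)$ and $\Tr(\H)$ is a two-sided ideal in $\L(\H)$, $R := h(W) \in \Tr(\H)$. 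Applying this with $W = XY^{2}X - I$ and using $W - 2(C+D) \in \Tr(\H)$ together with $X = I+C$, $Y = I+D$ gives
\[
(XY^{2}X)^{1/2} - X - Y + I = \tfrac12\bigl[W - 2(C+D)\bigr] + R \in \Tr(\H),
\]
which is the second assertion. For the first, substitute the identities $X^{2} = 2X - I + C^{2}$, $Y^{2} = 2Y - I + D^{2}$ into $X^{2} + Y^{2} - 2(XY^{2}X)^{1/2}$ to rewrite it as $-2\bigl[(XY^{2}X)^{1/2} - X - Y + I\bigr] + C^{2} + D^{2}$, which lies in $\Tr(\H)$ by the second assertion and $C^{2}, D^{2} \in \Tr(\H)$.

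\emph{Main obstacle.} The one non-bookkeeping step is the square root estimate: one cannot expand $(I+W)^{1/2}$ as a binomial series, since $\|W\|$ may exceed $1$, so the argument must proceed through the functional calculus of $h$, exploiting its double zero at $0$ and the fact that the positivity lower bound on $I+W$ keeps $\sigma(W)$ bounded away from $-1$. Everything else is routine manipulation in the Hilbert--Schmidt and trace class ideals.
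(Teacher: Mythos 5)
Your proof is correct, but it takes a genuinely different route from the paper's. The paper reduces the statement to the case $\alpha=1$ (Proposition~\ref{proposition:HS-to-Tr}, applied with $(I+A)$ replaced by $(I+A)^{\alpha}$) and then handles the square root via the polar decomposition of $(I+B)(I+A)$: by Lemma~\ref{lemma:polar-decomp-HSX} one writes $(I+B)(I+A)=(I+S)(I+C)^{1/2}$ with $I+S\in\Ubb(\H)\cap\HS_X(\H)$, so that $[(I+A)(I+B)^2(I+A)]^{1/2}=(I+S^{*})(I+B)(I+A)$, and the trace-class membership then follows by expanding and invoking $S+S^{*}=-S^{*}S\in\Tr(\H)$ (Lemma~\ref{lemma:I+U}). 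You instead control the square root directly through the functional calculus of $h(t)=\sqrt{1+t}-1-\tfrac12 t$, whose double zero at the origin lets you write $(I+W)^{1/2}-I-\tfrac12 W=W^{2}\tilde h(W)\in\Tr(\H)$; the rest is ideal bookkeeping. Your argument is more self-contained (it needs neither the polar decomposition lemma nor the unitary-part lemma, only that $\sigma(W)$ is compact in $(-1,\infty)$, which you correctly justify from the strict positivity of $XY^{2}X=(YX)^{*}(YX)$), and it moreover produces the explicit first-order term $I+\tfrac12 W$ of the square root, which cleanly yields both assertions at once. The paper's polar-decomposition route has the advantage that the unitary factor $I+S$ it constructs is reused in the proof of Theorem~\ref{theorem:Pdistance-AI-HS} to identify the minimizer of the Procrustes problem, so it earns its keep beyond this proposition. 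Both proofs are valid; yours is a legitimate and arguably more elementary alternative for this particular statement.
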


With Proposition \ref{proposition:HS-to-Tr-alpha},
 we are ready to state the following.

\begin{theorem}
	\label{theorem:Pdistance-AI-HS}
	Let $(I+A), (I+B) \in \PC_2(\H)$ and $\alpha \in \R, \alpha \neq 0$ be fixed.
	Then
	\begin{align}
	&(d^{\alpha}_{\proHS}[(I+A), (I+B)])^2= 
	\frac{1}{\alpha^2}\min_{(I+U) \in \Ubb(\H) \cap \HS_X(\H)}||(I+A)^{\alpha} - (I+B)^{\alpha}(I+U)||^2_{\eHS} 
	\nonumber
	\\
	& = \frac{1}{\alpha^2}\left(\trace[(I+A)^{2\alpha} + (I+B)^{2\alpha} -2 [(I+A)^{\alpha}(I+B)^{2\alpha}(I+A)^{\alpha}]^{1/2}]\right)
	\label{equation:min-AI-HS-1}
	\\
	& = \frac{1}{\alpha^2}\left(||(I+A)^{\alpha}||^2_{\eHS} + ||(I+B)^{\alpha}||^2_{\eHS} - 2\right)
	\nonumber
	\\
	& - \frac{2}{\alpha^2}\left(\trace[[(I+A)^{\alpha}(I+B)^{2\alpha}(I+A)^{\alpha}]^{1/2} - (I+A)^{\alpha} - (I+B)^{\alpha} + I]\right).
	\label{equation:min-AI-HS-2}
	\end{align}
	In particular, for $(I+A), (I+B) \in \PC_1(\H)$,
	\begin{align}
	(d^{\alpha}_{\proHS}[(I+A), (I+B)])^2 &= \frac{1}{\alpha^2}\left(\trace[(I+A)^{2\alpha}-I] +\trace[(I+B)^{2\alpha}-I] \right)
	\nonumber
	\\
	& - \frac{2}{\alpha^2} \trace([(I+A)^{\alpha}(I+B)^{2\alpha}(I+A)^{\alpha}]^{1/2} - I).
	\label{equation:min-AI-HS-3}
	\end{align}
\end{theorem}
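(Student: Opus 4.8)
The plan is to reduce the claim to a Procrustes-type minimisation over the group $\Ubb(\H)\cap\HS_X(\H)$ and to carry out that minimisation by a completion-of-the-square argument routed through a polar decomposition, while being careful that every trace is taken of a genuinely trace-class operator. Throughout write $P=(I+A)^{\alpha}=I+C_A$ and $Q=(I+B)^{\alpha}=I+C_B$ with $C_A,C_B\in\Sym(\H)\cap\HS(\H)$ (the computation preceding Proposition~\ref{proposition:limit-UV-same}). For any $(I+U)\in\Ubb(\H)\cap\HS_X(\H)$ one has $Q(I+U)=I+(C_B+U+C_BU)$, so $P-Q(I+U)=C_A-C_B-U-C_BU\in\HS(\H)$ has vanishing identity component; hence $\|P-Q(I+U)\|_{\eHS}=\|P-Q(I+U)\|_{\HS}<\infty$, which already gives the first equality in the statement and shows $d^{\alpha}_{\proHS}$ is well defined. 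By definition $(d^{\alpha}_{\proHS}[(I+A),(I+B)])^{2}=\alpha^{-2}\min_{V}\|P-QV\|^{2}_{\eHS}$, so it remains to evaluate $\min_{V}\|P-QV\|^{2}_{\eHS}$ over $V=I+U\in\Ubb(\H)\cap\HS_X(\H)$.

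The first step is a vanishing-trace identity: for every $G\in\HS(\H)$ and every $V=I+U\in\Ubb(\H)\cap\HS_X(\H)$ one has $V^{*}GV-G=U^{*}G+U^{*}GU+GU\in\Tr(\H)$, and, by cyclicity of the trace together with the unitarity relations $U+U^{*}+U^{*}U=0=U+U^{*}+UU^{*}$, $\trace(V^{*}GV-G)=\trace\big(G(U+U^{*}+U^{*}U)\big)=0$. Using the operator identity $(P-QV)^{*}(P-QV)=\big(P^{2}+Q^{2}-PQV-V^{*}QP\big)+\big(V^{*}Q^{2}V-Q^{2}\big)$, in which the left-hand side is trace class (product of two Hilbert--Schmidt operators) and the second summand on the right is trace class with zero trace (the identity with $G=Q^{2}-I$), we conclude that $P^{2}+Q^{2}-PQV-V^{*}QP\in\Tr(\H)$ and $\|P-QV\|^{2}_{\eHS}=\trace\big[P^{2}+Q^{2}-PQV-V^{*}QP\big]$.

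Now set $M=PQ$; then $M\in\HS_X(\H)$ is invertible, $MM^{*}=PQ^{2}P$, and $N:=(MM^{*})^{1/2}=(PQ^{2}P)^{1/2}$ is positive definite. By Lemma~\ref{lemma:polar-decomp-HSX} the polar decomposition of $M$ reads $M=S|M|=|M^{*}|S=N\mathcal{U}$ with $\mathcal{U}:=S\in\Ubb(\H)\cap\HS_X(\H)$. For any admissible $V$ put $Z=\mathcal{U}V\in\Ubb(\H)\cap\HS_X(\H)$, so that $PQV=NZ$ and $V^{*}QP=V^{*}M^{*}=Z^{*}N$, whence
\begin{align*}
P^{2}+Q^{2}-PQV-V^{*}QP=\big[P^{2}+Q^{2}-2N\big]+\big[2N-NZ-Z^{*}N\big].
\end{align*}
The first bracket is trace class by Proposition~\ref{proposition:HS-to-Tr-alpha} (it is exactly $(I+A)^{2\alpha}+(I+B)^{2\alpha}-2[(I+A)^{\alpha}(I+B)^{2\alpha}(I+A)^{\alpha}]^{1/2}$), hence so is the second, and a second application of the vanishing-trace identity, now with $G=N-I$, gives $\trace[2N-NZ-Z^{*}N]=\trace[(Z^{*}-I)N(Z-I)]=\|N^{1/2}(Z-I)\|^{2}_{\HS}\ge 0$, with equality iff $Z=I$, i.e. $V=\mathcal{U}^{*}$. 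Thus the minimum is attained at the admissible competitor $V=\mathcal{U}^{*}$ and equals $\trace[P^{2}+Q^{2}-2(PQ^{2}P)^{1/2}]$, which is Eq.~(\ref{equation:min-AI-HS-1}). Equation~(\ref{equation:min-AI-HS-2}) then follows from the regrouping $P^{2}+Q^{2}-2(PQ^{2}P)^{1/2}=(P-I)^{2}+(Q-I)^{2}-2\big[(PQ^{2}P)^{1/2}-P-Q+I\big]$ together with $\|(I+A)^{\alpha}\|^{2}_{\eHS}-1=\|C_A\|^{2}_{\HS}=\trace[(P-I)^{2}]$ (and similarly for $B$) and the trace-class membership $(PQ^{2}P)^{1/2}-P-Q+I\in\Tr(\H)$ from Proposition~\ref{proposition:HS-to-Tr-alpha}; for $(I+A),(I+B)\in\PC_1(\H)$ one has $C_A,C_B\in\Tr(\H)$, so $P^{2}-I$, $Q^{2}-I$ and $(PQ^{2}P)^{1/2}-I$ are each trace class and the trace in Eq.~(\ref{equation:min-AI-HS-1}) splits into the three terms of Eq.~(\ref{equation:min-AI-HS-3}).

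The main obstacle is the minimisation step carried out in the unitized setting: the individual quantities $\trace(P^{2})$, $\trace(Q^{2})$, $\trace\,(PQ^{2}P)^{1/2}$ and $\trace(PQV)$ are all infinite, so the whole argument must be organised around trace-class \emph{differences}; the completion of the square has to be performed through the left polar factor $N=(PQ^{2}P)^{1/2}$ of $PQ$, and it is essential --- this is where Lemma~\ref{lemma:polar-decomp-HSX} enters --- that the unitary part of the polar decomposition lies in $\Ubb(\H)\cap\HS_X(\H)$, so that $V=\mathcal{U}^{*}$ is an admissible competitor rather than merely an abstract unitary. Verifying the trace-class membership of every intermediate operator, via Proposition~\ref{proposition:HS-to-Tr-alpha} and the unitarity relations $U+U^{*}+U^{*}U=0$, is the bulk of the remaining technical work.
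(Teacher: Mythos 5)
Your proposal is correct and follows essentially the same route as the paper's proof: both expand $\|(I+A)^{\alpha}-(I+B)^{\alpha}(I+U)\|^2_{\eHS}$ into a trace-class combination, invoke Lemma \ref{lemma:polar-decomp-HSX} to write the polar factor of $(I+B)^{\alpha}(I+A)^{\alpha}$ (equivalently of $PQ$) as an element of $\Ubb(\H)\cap\HS_X(\H)$ so that it is an admissible competitor, use Proposition \ref{proposition:HS-to-Tr-alpha} and the relation $U+U^{*}=-U^{*}U$ to keep all traces on trace-class operators, and reduce the minimisation to a manifestly nonnegative quadratic term vanishing at the optimal unitary. The only differences are cosmetic (your $(Z^{*}-I)N(Z-I)$ grouping versus the paper's reparametrised $\trace[U^{*}U(I+C)^{1/2}]$).
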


\subsection{The case $\gamma = \nu > 0$}
\label{section:gamma-nu-equal}
The
case $\gamma = \nu=1$ generalizes to the case $\gamma = \nu > 0$ as follows.

\begin{definition}
	[\textbf{Alpha Procrustes distance between positive definite Hilbert-Schmidt operators}]
\label{definition:alpha-procrustes-infinite-2}
Let $\gamma > 0$,
$\alpha \in \R, \alpha \neq 0$ be fixed. 
The Alpha Procrustes distance between two operators $(A+\gamma I), (B+\gamma I) \in \PC_2(\H)$ is defined to be
\begin{align}
\label{equation:alpha-procustes-infinite-2}
&d^{\alpha}_{\proHS}[(A+\gamma I), (B + \gamma I)]
\nonumber 
\\
&= \min_{(I+U) \in \Ubb(\H) \cap \HS_X(\H)}\left\|\frac{(A+\gamma I)^{\alpha} - (B+\gamma I)^{\alpha}(I+U)}{\alpha}\right\|_{\eHS}.
\end{align}
\end{definition}

\begin{remark}
	We will present  the more technically involved case 
	$\gamma \neq \nu$ in the infinite-dimensional setting in a separate work.
\end{remark}


\begin{theorem}
[\textbf{Explicit expression}]
	\label{theorem:Pdistance-AgammaI}
	Let $(A+\gamma I), (B+\gamma I) \in \PC_2(\H)$ be fixed. Let $\alpha \in \R, \alpha \neq 0$ be fixed.
	Then
	\begin{align}
	\label{equation:dproHS-AI}	
	&(d^{\alpha}_{\proHS}[(A+\gamma I), (B+\gamma I)])^2
	\\
	& = \frac{1}{\alpha^2}\trace[(A+\gamma I)^{2\alpha} + (B+\gamma I)^{2\alpha} - 2[(A+\gamma I)^{\alpha}(B+\gamma I)^{2\alpha}(A+\gamma I)^{\alpha}]^{1/2}]
	\nonumber
	\\
	& = \frac{||(A+\gamma I)^{\alpha}||^2_{\eHS}-\gamma^{2\alpha}}{\alpha^2}  + \frac{||(B+\gamma I)^{\alpha}||^2_{\eHS}- \gamma^{2\alpha}}{\alpha^2}
	\\
	& - \frac{2}{\alpha^2}\trace[[(A+\gamma I)^{\alpha}(B+\gamma I)^{2\alpha}(A+\gamma I)^{\alpha}]^{1/2}-\gamma^{\alpha}(A+\gamma I)^{\alpha} - \gamma^{\alpha}(B+\gamma I)^{\alpha}+ \gamma^{2\alpha}I].
	\nonumber
	\end{align}
\end{theorem}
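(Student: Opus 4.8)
The plan is to reduce the case $\gamma=\nu>0$ to the already-established case $\gamma=\nu=1$ of Theorem \ref{theorem:Pdistance-AI-HS} by a scaling argument. First I would observe that $(A+\gamma I) = \gamma(I + A/\gamma)$ with $A/\gamma \in \HS(\H)$, so that $(A+\gamma I)^{\alpha} = \gamma^{\alpha}(I + A/\gamma)^{\alpha}$ by the functional calculus, and likewise $(B+\gamma I)^{\alpha} = \gamma^{\alpha}(I+B/\gamma)^{\alpha}$. Substituting into the definition \eqref{equation:alpha-procustes-infinite-2} and pulling the scalar $\gamma^{\alpha}$ out of the $\eHS$-norm gives
\begin{align}
d^{\alpha}_{\proHS}[(A+\gamma I),(B+\gamma I)] = \frac{\gamma^{\alpha}}{|\alpha|}\min_{(I+U)\in\Ubb(\H)\cap\HS_X(\H)}\left\|(I+A/\gamma)^{\alpha} - (I+B/\gamma)^{\alpha}(I+U)\right\|_{\HS},
\end{align}
where I use that the $\eHS$-norm of the (purely Hilbert-Schmidt) difference coincides with its $\HS$-norm, as noted in the discussion preceding Definition \ref{definition:alpha-procrustes-infinite-1}. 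The minimand on the right is exactly $|\alpha|\, d^{\alpha}_{\proHS}[(I+A/\gamma),(I+B/\gamma)]$, so Theorem \ref{theorem:Pdistance-AI-HS} applies verbatim with $A,B$ replaced by $A/\gamma, B/\gamma$.

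Next I would invoke \eqref{equation:min-AI-HS-1}: the square of the inner minimum equals $\trace[(I+A/\gamma)^{2\alpha} + (I+B/\gamma)^{2\alpha} - 2[(I+A/\gamma)^{\alpha}(I+B/\gamma)^{2\alpha}(I+A/\gamma)^{\alpha}]^{1/2}]$, with the trace well-defined because the bracketed operator lies in $\Tr(\H)$ by Proposition \ref{proposition:HS-to-Tr-alpha}. Multiplying through by $\gamma^{2\alpha}/\alpha^2$ and using $\gamma^{2\alpha}(I+A/\gamma)^{2\alpha} = (A+\gamma I)^{2\alpha}$ together with $\gamma^{2\alpha}[(I+A/\gamma)^{\alpha}(I+B/\gamma)^{2\alpha}(I+A/\gamma)^{\alpha}]^{1/2} = [(A+\gamma I)^{\alpha}(B+\gamma I)^{2\alpha}(A+\gamma I)^{\alpha}]^{1/2}$ (the latter since $\gamma^{2\alpha}$ is a positive scalar and the operator square root is homogeneous of degree $1/2$) yields the first displayed equality of the theorem. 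For the second and third equalities, I would similarly rescale \eqref{equation:min-AI-HS-2}: the identity $\|(I+A/\gamma)^{\alpha}\|^2_{\eHS} = \|(I+A)^{\alpha}/\gamma^{\alpha} \cdot \gamma^{\alpha}\|$ — more precisely, writing $(A+\gamma I)^{\alpha} = \gamma^{\alpha}I + C$ with $C\in\HS(\H)$, one has $\|(A+\gamma I)^{\alpha}\|^2_{\eHS} = \gamma^{2\alpha} + \|C\|^2_{\HS}$, so $\|(I+A/\gamma)^{\alpha}\|^2_{\eHS} = 1 + \|C\|^2_{\HS}/\gamma^{2\alpha}$, hence $\gamma^{2\alpha}(\|(I+A/\gamma)^{\alpha}\|^2_{\eHS} - 1) = \|(A+\gamma I)^{\alpha}\|^2_{\eHS} - \gamma^{2\alpha}$ — converts the $\|\cdot\|^2_{\eHS}-2$ terms of \eqref{equation:min-AI-HS-2} into the $\|\cdot\|^2_{\eHS}-\gamma^{2\alpha}$ terms claimed here. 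The trace-class correction term in \eqref{equation:min-AI-HS-2} rescales analogously, producing the bracket $[(A+\gamma I)^{\alpha}(B+\gamma I)^{2\alpha}(A+\gamma I)^{\alpha}]^{1/2} - \gamma^{\alpha}(A+\gamma I)^{\alpha} - \gamma^{\alpha}(B+\gamma I)^{\alpha} + \gamma^{2\alpha}I$, whose trace class membership follows by scaling the second assertion of Proposition \ref{proposition:HS-to-Tr-alpha}.

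The only genuinely delicate point — and the one I would treat most carefully — is the bookkeeping of the unitized norms and traces: one must consistently separate the scalar-multiple-of-identity part from the Hilbert-Schmidt (resp. trace-class) part at each occurrence, so that every trace is taken of an operator genuinely in $\Tr(\H)$ and every $\eHS$-norm is decomposed correctly into its scalar and Hilbert-Schmidt contributions. Once the substitution $A\mapsto A/\gamma$, $B\mapsto B/\gamma$ is made and the homogeneity relations $(\lambda X)^{\alpha} = \lambda^{\alpha}X^{\alpha}$, $(\lambda Y)^{1/2} = \lambda^{1/2}Y^{1/2}$ for $\lambda>0$ are applied, everything follows from Theorem \ref{theorem:Pdistance-AI-HS} and Proposition \ref{proposition:HS-to-Tr-alpha} without further analytic input; there is no new optimization to perform, since Proposition \ref{proposition:limit-UV-same} already guarantees the single-unitary form of the minimum is attained.
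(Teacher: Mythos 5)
Your proposal is correct and follows essentially the same route as the paper: the paper isolates your scaling step as Lemma \ref{lemma:Pdistance-decompose}, namely $d^{\alpha}_{\proHS}[(A+\gamma I), (B+\gamma I)] = \gamma^{\alpha}d^{\alpha}_{\proHS}[(A/\gamma + I), (B/\gamma + I)]$, and then applies Theorem \ref{theorem:Pdistance-AI-HS} together with the homogeneity relations exactly as you do. Your careful decomposition of the $\eHS$-norms into scalar and Hilbert--Schmidt parts matches the paper's treatment of the second displayed equality.
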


{\bf Special case: Finite-dimensional setting}. For $A,B \in \Sym^{++}(n)$ ($\alpha \neq 0$), or $A,B \in \Sym^{+}(n)$ ($\alpha > 0$), setting $\gamma = 0$ in Eq.(\ref{equation:dproHS-AI}) gives
\begin{align}
(d^{\alpha}_{\proHS}[A, B])^2 = \frac{1}{\alpha^2}\trace[A^{2\alpha} + B^{2\alpha} - 2(A^{\alpha}B^{2\alpha}A^{\alpha})^{1/2}] = (d^{\alpha}_{\proE}[A,B])^2. 
\end{align}

In the case  $A,B$ are positive trace class operators and $\alpha \geq 1/2$, we can set the quantity $\gamma$ in Theorem \ref{theorem:Pdistance-AgammaI} 
to zero, since we have the following.

\begin{lemma}
	\label{lemma:trace-power-1}
	Assume that $A \in \Sym^{+}(\H) \cap \Tr(\H)$. Then $A^{\alpha}\in \Sym^{+}(\H) \cap \HS(\H)$ and $A^{2\alpha} \in \Sym^{+}(\H) \cap \Tr(\H)$ for all $\alpha \geq 1/2$.
\end{lemma}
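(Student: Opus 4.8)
The plan is to reduce everything to the eigenvalue sequence of $A$ via the spectral theorem. Since $A \in \Sym^{+}(\H) \cap \Tr(\H)$, it is in particular a compact, positive operator, hence admits an orthonormal eigenbasis $\{e_k\}$ with eigenvalues $\lambda_k \geq 0$ satisfying $\sum_{k} \lambda_k = \trace(A) < \infty$; in particular $\lambda_k \to 0$ and $\lambda_k \leq \|A\| < \infty$ for all $k$. For any $\alpha > 0$, continuous functional calculus applied to $t \mapsto t^{\alpha}$ on $[0,\|A\|]$ gives $A^{\alpha} = \sum_k \lambda_k^{\alpha}\la \cdot, e_k\ra e_k$, which is self-adjoint with nonnegative eigenvalues, so $A^{\alpha} \in \Sym^{+}(\H)$ and likewise $A^{2\alpha} \in \Sym^{+}(\H)$; the positivity assertions are thus immediate. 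It then suffices to prove $\sum_k \lambda_k^{2\alpha} < \infty$, because this quantity equals $\|A^{\alpha}\|_{\HS}^2$ and, being the sum of the eigenvalues of the positive operator $A^{2\alpha}$, also equals $\trace(A^{2\alpha})$; the two membership claims follow at once.

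For the convergence I would split the index set according to the size of $\lambda_k$. Because $\lambda_k \to 0$, the set $\{k : \lambda_k > 1\}$ is finite, and on it each term satisfies $\lambda_k^{2\alpha} \leq \|A\|^{2\alpha}$, so it contributes a finite total. For the remaining indices one has $0 \leq \lambda_k \leq 1$, and combined with $2\alpha \geq 1$ (this is exactly where the hypothesis $\alpha \geq 1/2$ is used) this gives $\lambda_k^{2\alpha} \leq \lambda_k$, whence $\sum_{k:\lambda_k \leq 1}\lambda_k^{2\alpha} \leq \sum_k \lambda_k = \trace(A) < \infty$. Adding the two pieces yields $\sum_k \lambda_k^{2\alpha} < \infty$, so $A^{\alpha} \in \HS(\H)$ and $A^{2\alpha} \in \Tr(\H)$.

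This argument is elementary, so I do not anticipate a genuine obstacle: the only points needing care are keeping the split into finitely many ``large'' eigenvalues and the tail of ``small'' ones straight, and recording the fact that $t^{p} \leq t$ on $[0,1]$ precisely when $p \geq 1$. The borderline case $\alpha = 1/2$ is in fact trivial, since there $A^{2\alpha} = A \in \Tr(\H)$ and $A^{1/2} \in \HS(\H)$ because $\|A^{1/2}\|_{\HS}^2 = \trace(A) < \infty$.
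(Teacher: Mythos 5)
Your argument is correct and follows essentially the same route as the paper: reduce to the eigenvalue sequence, note $\sum_k \lambda_k < \infty$, and deduce $\sum_k \lambda_k^{2\alpha} < \infty$ from $2\alpha \geq 1$. You merely fill in the detail (splitting off the finitely many eigenvalues exceeding $1$ and using $t^{2\alpha} \leq t$ on $[0,1]$) that the paper leaves implicit.
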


\begin{lemma}
	\label{lemma:trace-power-2}
	Assume that $A,B \in \Sym^{+}(\H) \cap \Tr(\H)$. Then $(A^{\alpha}B^{2\alpha}A^{\alpha})^{1/2} \in \Sym^{+}(\H) \cap \Tr(\H)$ for all $\alpha \geq 1/2$.
\end{lemma}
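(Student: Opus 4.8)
The plan is to exhibit $A^{\alpha}B^{2\alpha}A^{\alpha}$ as an operator of the form $NN^{*}$ with $N$ trace class, and then use the fact that $(NN^{*})^{1/2}$ has the same singular values as $N$. First, by Lemma \ref{lemma:trace-power-1} applied to $A$ and to $B$ separately, for every $\alpha \geq 1/2$ we have $A^{\alpha}, B^{\alpha} \in \Sym^{+}(\H) \cap \HS(\H)$. Since $B^{\alpha}$ is self-adjoint positive, $B^{2\alpha} = B^{\alpha}B^{\alpha}$, and since $A^{\alpha}, B^{\alpha}$ are self-adjoint, $(A^{\alpha}B^{\alpha})^{*} = B^{\alpha}A^{\alpha}$. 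Hence, setting $N := A^{\alpha}B^{\alpha}$,
\begin{align}
A^{\alpha}B^{2\alpha}A^{\alpha} = A^{\alpha}B^{\alpha}\, B^{\alpha}A^{\alpha} = N N^{*}.
\end{align}

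Next, $N = A^{\alpha}B^{\alpha}$ is the product of two Hilbert-Schmidt operators, hence trace class, with $\|N\|_{\Tr} \leq \|A^{\alpha}\|_{\HS}\,\|B^{\alpha}\|_{\HS} < \infty$. In particular $A^{\alpha}B^{2\alpha}A^{\alpha} = NN^{*}$ is a bounded positive operator, so its square root is well defined, and $(A^{\alpha}B^{2\alpha}A^{\alpha})^{1/2} = (NN^{*})^{1/2} = |N^{*}|$. This operator is manifestly self-adjoint and positive, so it lies in $\Sym^{+}(\H)$. Its non-zero eigenvalues are the singular values of $N^{*}$, which coincide with the singular values of $N$; since $N$ is trace class these are summable, and by the polar decomposition $\trace\,|N^{*}| = \|N^{*}\|_{\Tr} = \|N\|_{\Tr} < \infty$. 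Therefore $(A^{\alpha}B^{2\alpha}A^{\alpha})^{1/2} \in \Sym^{+}(\H)\cap\Tr(\H)$, as claimed.

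There is essentially no hard step here: the only point requiring a moment of care is the identification of $(NN^{*})^{1/2}$ with $|N^{*}|$ and the resulting equality of its trace with the trace norm of $N$, which is a standard consequence of the polar decomposition and the invariance of singular values under taking adjoints. Everything else is bookkeeping with the algebraic identity $B^{2\alpha} = B^{\alpha}B^{\alpha}$ and the Hilbert-Schmidt$\,\times\,$Hilbert-Schmidt $\Rightarrow$ trace class property, combined with Lemma \ref{lemma:trace-power-1}.
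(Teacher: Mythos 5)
Your proof is correct and follows essentially the same route as the paper: both apply Lemma \ref{lemma:trace-power-1} to get $A^{\alpha}, B^{\alpha} \in \Sym^{+}(\H)\cap\HS(\H)$, write $A^{\alpha}B^{2\alpha}A^{\alpha} = (A^{\alpha}B^{\alpha})(A^{\alpha}B^{\alpha})^{*}$ with $A^{\alpha}B^{\alpha}$ trace class as a product of two Hilbert--Schmidt operators, and identify $\trace[(A^{\alpha}B^{2\alpha}A^{\alpha})^{1/2}]$ with the trace norm of $A^{\alpha}B^{\alpha}$. Your write-up is in fact slightly more explicit about the polar-decomposition step than the paper's.
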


\begin{corollary}
	[\textbf{Explicit expression - Positive trace class operators}]
	\label{corollary:explicit-traceclass}
Assume that $A,B \in \Sym^{+}(\H) \cap \Tr(\H)$ and $\alpha \geq 1/2$. Then
\begin{align}
\label{equation:explicit-traceclass}
d^{\alpha}_{\proHS}(A,B) &= 
\lim_{\gamma \approach 0}d^{\alpha}_{\proHS}[(A+\gamma I), (B+\gamma I)]
\nonumber
\\
&= \frac{(\trace[A^{2 \alpha} + B^{2\alpha} - 2(A^{\alpha}B^{2\alpha}A^{\alpha})^{1/2}])^{1/2}}{\alpha}.
\end{align}
\end{corollary}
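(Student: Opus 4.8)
The plan is to pass to the limit $\gamma\approach 0^{+}$ in the explicit formula of Theorem~\ref{theorem:Pdistance-AgammaI}, using the forms in Eq.~(\ref{equation:dproHS-AI}) that isolate the $\gamma^{2\alpha}$--terms. Since $\alpha\ge 1/2$, Lemmas~\ref{lemma:trace-power-1} and~\ref{lemma:trace-power-2} guarantee $A^{\alpha},B^{\alpha}\in\Sym^{+}(\H)\cap\HS(\H)$, $A^{2\alpha},B^{2\alpha}\in\Sym^{+}(\H)\cap\Tr(\H)$ and $(A^{\alpha}B^{2\alpha}A^{\alpha})^{1/2}\in\Sym^{+}(\H)\cap\Tr(\H)$, so the right-hand side of Eq.~(\ref{equation:explicit-traceclass}) is well defined and equals $\tfrac1{\alpha}\min_{U\in\Ubb(\H)}\|A^{\alpha}-B^{\alpha}U\|_{\HS}$ by Corollary~\ref{corollary:Bures-alpha}; what remains is the convergence. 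Throughout I would write $(A+\gamma I)^{\alpha}=\gamma^{\alpha}I+C_{\gamma}$ and $(B+\gamma I)^{\alpha}=\gamma^{\alpha}I+E_{\gamma}$, where $C_{\gamma},E_{\gamma}$ are positive and, for each fixed $\gamma>0$, trace class (their eigenvalues $(\lambda_k+\gamma)^{\alpha}-\gamma^{\alpha}=\big((\lambda_k+\gamma)^{2\alpha}-\gamma^{2\alpha}\big)/\big((\lambda_k+\gamma)^{\alpha}+\gamma^{\alpha}\big)$ are summable since $\alpha\ge 1/2$).

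First I would assemble a package of elementary scalar estimates on the eigenvalues $\{\lambda_k\}$ of $A$ (and of $B$). Using $(s+t)^{\alpha}\le s^{\alpha}+t^{\alpha}$ when $\tfrac12\le\alpha\le 1$ and the mean value theorem when $\alpha\ge 1$, one obtains, uniformly for $\gamma\in(0,1]$, bounds of the form $|(\lambda_k+\gamma)^{\alpha}-\gamma^{\alpha}-\lambda_k^{\alpha}|\le\min(\lambda_k^{\alpha},c\,\lambda_k)$ and $\gamma^{\alpha}\big[(\lambda_k+\gamma)^{\alpha}-\gamma^{\alpha}\big]\le(\lambda_k+\gamma)^{2\alpha}-\gamma^{2\alpha}\le c\,\lambda_k$, dominated by the summable sequences $\lambda_k^{2\alpha}$ and $\lambda_k^{2}$. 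Dominated convergence then gives, as $\gamma\approach 0^{+}$, $\|C_{\gamma}-A^{\alpha}\|_{\HS}\approach 0$, $\gamma^{\alpha}\trace(C_{\gamma})\approach 0$, and the analogues for $B$. In particular $\|(A+\gamma I)^{\alpha}\|^{2}_{\eHS}-\gamma^{2\alpha}=\|C_{\gamma}\|^{2}_{\HS}\approach\trace(A^{2\alpha})$, so the first two terms of Eq.~(\ref{equation:dproHS-AI}) converge to $\trace(A^{2\alpha})$ and $\trace(B^{2\alpha})$.

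The core of the argument is the cross term. Set $Z_{\gamma}=(A+\gamma I)^{\alpha}(B+\gamma I)^{\alpha}=\gamma^{2\alpha}I+N_{\gamma}$ with $N_{\gamma}=\gamma^{\alpha}(C_{\gamma}+E_{\gamma})+C_{\gamma}E_{\gamma}\in\Tr(\H)$. From Step~1, $\gamma^{\alpha}\|C_{\gamma}\|_{\Tr}=\gamma^{\alpha}\trace(C_{\gamma})\approach 0$ (and its $B$-analogue) and $\|C_{\gamma}E_{\gamma}-A^{\alpha}B^{\alpha}\|_{\Tr}\le\|C_{\gamma}-A^{\alpha}\|_{\HS}\|E_{\gamma}\|_{\HS}+\|A^{\alpha}\|_{\HS}\|E_{\gamma}-B^{\alpha}\|_{\HS}\approach 0$, so $N_{\gamma}\approach A^{\alpha}B^{\alpha}$ in trace norm; hence $\|N_{\gamma}\|_{\Tr}\approach\|A^{\alpha}B^{\alpha}\|_{\Tr}=\trace[(A^{\alpha}B^{2\alpha}A^{\alpha})^{1/2}]$ and $\sum_k|\sigma_k(N_{\gamma})-\sigma_k(A^{\alpha}B^{\alpha})|\approach 0$, $\sigma_k(\cdot)$ denoting singular values. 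Now $[(A+\gamma I)^{\alpha}(B+\gamma I)^{2\alpha}(A+\gamma I)^{\alpha}]^{1/2}=(Z_{\gamma}Z_{\gamma}^{*})^{1/2}=|Z_{\gamma}^{*}|$ and $\gamma^{\alpha}(A+\gamma I)^{\alpha}=\gamma^{2\alpha}I+\gamma^{\alpha}C_{\gamma}$, so the cross term in Eq.~(\ref{equation:dproHS-AI}) equals $\trace[\,|Z_{\gamma}^{*}|-\gamma^{2\alpha}I\,]-\gamma^{\alpha}\trace(C_{\gamma})-\gamma^{\alpha}\trace(E_{\gamma})$, each summand the trace of a trace-class operator. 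The last two vanish by Step~1, and $\trace[\,|Z_{\gamma}^{*}|-\gamma^{2\alpha}I\,]=\sum_k(\sigma_k(Z_{\gamma})-\gamma^{2\alpha})$. Since $Z_{\gamma}Z_{\gamma}^{*}\ge\gamma^{4\alpha}I$ we have $\sigma_k(Z_{\gamma})\ge\gamma^{2\alpha}$, and the singular-value perturbation inequality gives $|\sigma_k(Z_{\gamma})-\sigma_k(N_{\gamma})|\le\|Z_{\gamma}-N_{\gamma}\|_{\mathrm{op}}=\gamma^{2\alpha}$, whence
\begin{align}
\max(0,\,\sigma_k(N_{\gamma})-2\gamma^{2\alpha})\;\le\;\sigma_k(Z_{\gamma})-\gamma^{2\alpha}\;\le\;\sigma_k(N_{\gamma}).
\nonumber
\end{align}
Summing over $k$, the right side tends to $\|A^{\alpha}B^{\alpha}\|_{\Tr}$, and the left side equals $\|N_{\gamma}\|_{\Tr}-\sum_k\min(\sigma_k(N_{\gamma}),2\gamma^{2\alpha})$; bounding $\sum_k\min(\sigma_k(N_{\gamma}),2\gamma^{2\alpha})\le\sum_k\min(\sigma_k(A^{\alpha}B^{\alpha}),2\gamma^{2\alpha})+\sum_k|\sigma_k(N_{\gamma})-\sigma_k(A^{\alpha}B^{\alpha})|$, the first sum $\approach 0$ by dominated convergence (dominated by the fixed summable $\sigma_k(A^{\alpha}B^{\alpha})$) and the second by trace-norm convergence, the left side too tends to $\|A^{\alpha}B^{\alpha}\|_{\Tr}$. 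So the cross term converges to $\trace[(A^{\alpha}B^{2\alpha}A^{\alpha})^{1/2}]$; assembling the three limits in Eq.~(\ref{equation:dproHS-AI}) (with $\alpha\ge 1/2>0$, so $|\alpha|=\alpha$) and taking square roots yields Eq.~(\ref{equation:explicit-traceclass}).

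The main obstacle is precisely this cross term in the borderline case $\alpha=1/2$, where $A^{\alpha},B^{\alpha}$ are merely Hilbert--Schmidt: then $|Z_{\gamma}|$ does not split as $\gamma^{2\alpha}I+|N_{\gamma}|$, and an operator-Lipschitz bound for the square root would carry a constant of order $\gamma^{-2\alpha}$ that fails to close (one does not control $\|N_{\gamma}-A^{\alpha}B^{\alpha}\|_{\Tr}=o(\gamma^{2\alpha})$). The remedy, as above, is to keep the shift $\gamma^{2\alpha}I$ and argue purely at the level of singular values, reducing the only genuinely nontrivial limit, $\sum_k\min(\sigma_k(N_{\gamma}),2\gamma^{2\alpha})\approach 0$, to dominated convergence against the fixed trace-class operator $A^{\alpha}B^{\alpha}$.
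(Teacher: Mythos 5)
Your argument is correct, and it actually supplies more than the paper does: the paper states Corollary~\ref{corollary:explicit-traceclass} with no proof in the appendix, relying only on Lemmas~\ref{lemma:trace-power-1} and~\ref{lemma:trace-power-2} to guarantee that the right-hand side of Eq.~(\ref{equation:explicit-traceclass}) is finite, and treating the passage to the limit $\gamma \approach 0$ in Theorem~\ref{theorem:Pdistance-AgammaI} as evident. The genuinely delicate point is exactly the one you isolate: the cross term $\trace\bigl[[(A+\gamma I)^{\alpha}(B+\gamma I)^{2\alpha}(A+\gamma I)^{\alpha}]^{1/2}-\gamma^{\alpha}(A+\gamma I)^{\alpha}-\gamma^{\alpha}(B+\gamma I)^{\alpha}+\gamma^{2\alpha}I\bigr]$, where at $\alpha=1/2$ one only has $A^{\alpha},B^{\alpha}\in\HS(\H)$ and no operator-Lipschitz bound for the square root survives the $\gamma^{-2\alpha}$ blow-up. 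Your device of keeping the shift $\gamma^{2\alpha}I$, writing the term as $\sum_k(\sigma_k(Z_\gamma)-\gamma^{2\alpha})$ minus the two vanishing pieces $\gamma^{\alpha}\trace(C_\gamma)$, $\gamma^{\alpha}\trace(E_\gamma)$, and squeezing via $\max(0,\sigma_k(N_\gamma)-2\gamma^{2\alpha})\le\sigma_k(Z_\gamma)-\gamma^{2\alpha}\le\sigma_k(N_\gamma)$ together with trace-norm convergence $N_\gamma\approach A^{\alpha}B^{\alpha}$ is a clean and correct way to close this. Two standard facts are used implicitly and should be cited or stated: the Weyl-type perturbation bound $|s_k(S+T)-s_k(S)|\le\|T\|$ for the min-max (approximation) numbers, applied to the \emph{non-compact} operator $Z_\gamma=\gamma^{2\alpha}I+N_\gamma$ (valid because $|Z_\gamma|-\gamma^{2\alpha}I$ is positive compact, so its eigenvalues coincide with those min-max values shifted by $\gamma^{2\alpha}$), and Mirsky's inequality $\sum_k|\sigma_k(S)-\sigma_k(T)|\le\|S-T\|_{\tr}$ for compact operators, which you need to convert trace-norm convergence of $N_\gamma$ into the termwise estimate on singular values. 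With those references in place the proof is complete and fills a gap the paper leaves to the reader.
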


In particular, for $\alpha = 1/2$, we recover the Bures-Wasserstein distance.
\begin{corollary}
	[\textbf{Special case - Bures-Wasserstein distance}]
Let $A, B \in \Sym^{+}(\H) \cap \Tr(\H)$. Then
\begin{align}
d^{1/2}_{\proHS}(A,B) &= \lim_{\gamma \approach 0}d^{1/2}_{\proHS}[(A+\gamma I), (B+\gamma I)]
\nonumber
\\
&= 2 (\trace[A + B - 2(A^{1/2}BA^{1/2})^{1/2}])^{1/2}.
\end{align}
\end{corollary}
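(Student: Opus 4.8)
The plan is to derive this corollary directly as the endpoint case $\alpha = 1/2$ of Corollary~\ref{corollary:explicit-traceclass}. The first step is to verify that the hypotheses line up: Corollary~\ref{corollary:explicit-traceclass} applies to any $A, B \in \Sym^{+}(\H) \cap \Tr(\H)$ and any $\alpha \geq 1/2$, and here we take precisely $\alpha = 1/2$, which is the boundary of the admissible range. In particular, Lemmas~\ref{lemma:trace-power-1} and~\ref{lemma:trace-power-2} are stated for all $\alpha \geq 1/2$, so at $\alpha = 1/2$ we still have $A^{1/2}, B^{1/2} \in \Sym^{+}(\H) \cap \HS(\H)$, $A, B \in \Sym^{+}(\H) \cap \Tr(\H)$, and $(A^{1/2}BA^{1/2})^{1/2} \in \Sym^{+}(\H) \cap \Tr(\H)$; hence the trace appearing in Eq.~(\ref{equation:explicit-traceclass}) is finite and the limit $\gamma \approach 0$ in that corollary is legitimate.

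The second step is the substitution itself. Setting $\alpha = 1/2$ in Eq.~(\ref{equation:explicit-traceclass}) and using $A^{2\alpha} = A$, $B^{2\alpha} = B$, $A^{\alpha}B^{2\alpha}A^{\alpha} = A^{1/2}BA^{1/2}$, and $1/\alpha = 2$ turns the right-hand side into
\[
d^{1/2}_{\proHS}(A,B) = \lim_{\gamma \approach 0} d^{1/2}_{\proHS}[(A+\gamma I),(B+\gamma I)] = 2\left(\trace[A + B - 2(A^{1/2}BA^{1/2})^{1/2}]\right)^{1/2},
\]
which is exactly the asserted identity.

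The third, optional, step is to make the Bures--Wasserstein interpretation explicit. By Theorem~\ref{theorem:Bures-traceclass}, $\trace[A + B - 2(A^{1/2}BA^{1/2})^{1/2}] = \min_{U \in \Ubb(\H)}\|A^{1/2} - B^{1/2}U\|^2_{\HS} = d^2_{\Wass}(\Ncal(0,A),\Ncal(0,B))$, so $d^{1/2}_{\proHS}(A,B) = 2\,d_{\BW}(A,B)$, i.e.\ twice the (infinite-dimensional) Bures--Wasserstein distance. Since the whole argument is a specialization of a result already established, there is essentially no obstacle; the only point demanding a moment's attention is confirming that the endpoint exponent $\alpha = 1/2$ is genuinely covered --- and not merely approached --- by Corollary~\ref{corollary:explicit-traceclass} and the supporting lemmas, which it is.
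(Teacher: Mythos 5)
Your proposal is correct and is exactly the route the paper intends: the corollary is the immediate specialization $\alpha = 1/2$ of Corollary~\ref{corollary:explicit-traceclass} (whose hypotheses, via Lemmas~\ref{lemma:trace-power-1} and~\ref{lemma:trace-power-2}, do include the endpoint $\alpha = 1/2$), and the paper gives no separate proof precisely because the substitution $A^{2\alpha}=A$, $B^{2\alpha}=B$, $1/\alpha=2$ is all that is needed. Your optional identification with $2\,d_{\BW}(A,B)$ via Theorem~\ref{theorem:Bures-traceclass} matches the paper's reading of the result.
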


{\bf Limiting case}. Consider now the limiting case $\alpha \approach 0$.
\begin{lemma}
	\label{lemma:alpha-0-logHS-1}
	For $(I+A) \in \PC_2(\H)$,
	\begin{align}
	\lim_{\alpha \approach 0}\frac{||(I+A)^{\alpha}||^2_{\eHS}-1}{\alpha^2} = ||\log(I+A)||^2_{\HS}.
	\end{align}
	For $(A+\gamma I) \in \PC_2(\H)$,
	\begin{align}
	\lim_{\alpha \approach 0}\frac{||(A+\gamma I)^{\alpha}||^2_{\eHS}-\gamma^{2\alpha}}{\alpha^2} = \left\|\log\left(I+\frac{A}{\gamma}\right)\right\|^2_{\HS}.
	\end{align}
\end{lemma}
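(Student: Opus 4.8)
The plan is to reduce both limits to the elementary observation that the power series defining the $\alpha$-th power differs from its first-order Taylor expansion only by terms of order $\alpha^{2}$ in Hilbert--Schmidt norm, and then to read off the extended Hilbert--Schmidt norm by splitting off the scalar part, which is orthogonal to the Hilbert--Schmidt part for $\la\;,\;\ra_{\eHS}$.

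For the first identity, set $L = \log(I+A) \in \Sym(\H)\cap\HS(\H)$, which is Hilbert--Schmidt precisely because $(I+A)\in\PC_2(\H)$. As recalled just before Proposition~\ref{proposition:limit-UV-same}, $(I+A)^{\alpha} = I + C_{\alpha}$ with $C_{\alpha} = \sum_{k=1}^{\infty}\frac{\alpha^{k}}{k!}L^{k} \in \Sym(\H)\cap\HS(\H)$. Since the decomposition of an element of $\HS_X(\H)$ into a Hilbert--Schmidt part and a scalar multiple of $I$ is unique (as $I\notin\HS(\H)$) and those two parts are $\la\;,\;\ra_{\eHS}$-orthogonal, we get $\|(I+A)^{\alpha}\|^{2}_{\eHS} = \|C_{\alpha}\|^{2}_{\HS} + 1$, so the quantity whose limit we want equals $\|C_{\alpha}/\alpha\|^{2}_{\HS}$. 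Now $\frac{C_{\alpha}}{\alpha} - L = \sum_{k=2}^{\infty}\frac{\alpha^{k-1}}{k!}L^{k}$, and using the submultiplicativity estimate $\|L^{k}\|_{\HS}\le\|L\|_{\HS}^{k}$ we obtain
\begin{align}
\left\|\frac{C_{\alpha}}{\alpha} - L\right\|_{\HS} \le \sum_{k=2}^{\infty}\frac{|\alpha|^{k-1}}{k!}\|L\|_{\HS}^{k} = \frac{1}{|\alpha|}\left(\exp(|\alpha|\,\|L\|_{\HS}) - 1 - |\alpha|\,\|L\|_{\HS}\right),
\end{align}
which is $O(|\alpha|)$ and hence tends to $0$ as $\alpha\approach 0$. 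Thus $C_{\alpha}/\alpha\to L$ in $\|\;\|_{\HS}$, so $\|C_{\alpha}/\alpha\|^{2}_{\HS}\to\|L\|^{2}_{\HS} = \|\log(I+A)\|^{2}_{\HS}$, which is the first claim.

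For the second identity, factor $A+\gamma I = \gamma(I + A/\gamma)$ with $A/\gamma\in\HS(\H)$ and $I + A/\gamma > 0$ since $\gamma > 0$. Then $(A+\gamma I)^{\alpha} = \gamma^{\alpha}(I+A/\gamma)^{\alpha} = \gamma^{\alpha}(I + C'_{\alpha})$ with $C'_{\alpha} = \sum_{k=1}^{\infty}\frac{\alpha^{k}}{k!}[\log(I+A/\gamma)]^{k}\in\HS(\H)$, so the Hilbert--Schmidt part of $(A+\gamma I)^{\alpha}$ is $\gamma^{\alpha}C'_{\alpha}$ and its scalar part is $\gamma^{\alpha}$; by the same orthogonality argument $\|(A+\gamma I)^{\alpha}\|^{2}_{\eHS} = \gamma^{2\alpha}\|C'_{\alpha}\|^{2}_{\HS} + \gamma^{2\alpha}$. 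Hence
\begin{align}
\frac{\|(A+\gamma I)^{\alpha}\|^{2}_{\eHS} - \gamma^{2\alpha}}{\alpha^{2}} = \gamma^{2\alpha}\left\|\frac{C'_{\alpha}}{\alpha}\right\|^{2}_{\HS},
\end{align}
and as $\alpha\approach 0$ we have $\gamma^{2\alpha}\to 1$ while, by the argument of the first part applied to $I+A/\gamma$, $\|C'_{\alpha}/\alpha\|^{2}_{\HS}\to\|\log(I+A/\gamma)\|^{2}_{\HS}$; this gives the second claim.

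The only point requiring real care is the Hilbert--Schmidt-norm convergence of the truncated series $C_{\alpha}/\alpha$ to $\log(I+A)$, which I expect to be the main (though mild) obstacle: one must control the tail $\sum_{k\ge 2}$ uniformly for small $\alpha$, as done in the displayed estimate above, and verify that $\log(I+A)$ is itself Hilbert--Schmidt so the right-hand sides are finite. Everything else is bookkeeping with the extended Hilbert--Schmidt inner product.
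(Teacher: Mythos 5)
Your proof is correct and follows essentially the same route as the paper's: expand $(I+A)^{\alpha}=\exp(\alpha\log(I+A))$, show that $\frac{(I+A)^{\alpha}-I}{\alpha}$ converges to $\log(I+A)$ in Hilbert--Schmidt norm via an exponential tail bound, use $\|(I+A)^{\alpha}\|^{2}_{\eHS}=\|(I+A)^{\alpha}-I\|^{2}_{\HS}+1$, and reduce the general $\gamma$ case to $\gamma=1$ by factoring out $\gamma^{\alpha}$. The only cosmetic difference is that you phrase the tail estimate as $\frac{1}{|\alpha|}\bigl(\exp(|\alpha|\,\|L\|_{\HS})-1-|\alpha|\,\|L\|_{\HS}\bigr)$ while the paper factors out $\|L\|_{\HS}^{2}$ explicitly; both give the same $O(|\alpha|)$ bound.
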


\begin{lemma}
	\label{lemma:alpha-0-logHS-2}
	For $(I+A), (I+B) \in \PC_2(\H)$,
	\begin{align}
	&\lim_{\alpha \approach 0}\frac{1}{\alpha^2}\trace[[(I+A)^{\alpha}(I+B)^{2\alpha}(I+A)^{\alpha}]^{1/2} - (I+A)^{\alpha} - (I+B)^{\alpha} + I]
	\nonumber
	\\
	& = \trace[\log(I+A)\log(I+B)] = \la \log(I+A), \log(I+B)\ra_{\HS}.
	\end{align}
	For $(A+\gamma I), (B+\gamma I) \in \PC_2(\H)$,
	\begin{align}
	&\lim_{\alpha \approach 0}\frac{1}{\alpha^2}\trace[[(A + \gamma I)^{\alpha}(B + \gamma I)^{2\alpha}(A + \gamma I)^{\alpha}]^{1/2} - \gamma^{\alpha}(A+\gamma I)^{\alpha} - \gamma^{\alpha}(B+\gamma I)^{\alpha} + \gamma^{2\alpha}I]
	\nonumber
	\\
	& = \trace\left[\log\left(I+\frac{A}{\gamma}\right)\log\left(I+\frac{B}{\gamma}\right)\right] = \left\la \log\left(I+\frac{A}{\gamma}\right), \log\left(I+\frac{B}{\gamma}\right)\right\ra_{\HS}.
	\end{align}
\end{lemma}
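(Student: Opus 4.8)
\smallskip
\noindent\textbf{Proof proposal.}
The idea is to expand everything to second order in $\alpha$, arranged so that the terms that are only Hilbert--Schmidt (and hence carry no trace) cancel, and then to reduce the case of general $\gamma>0$ to the case $\gamma=1$ by rescaling. Write $X=\log(I+A)$ and $Y=\log(I+B)$; these lie in $\Sym(\H)\cap\HS(\H)$ because $(I+A),(I+B)\in\PC_2(\H)$. Using $(I+A)^{\alpha}=\exp(\alpha X)$ and expanding the exponential in the unital Banach algebra $\HS_X(\H)$, one has in Hilbert--Schmidt norm
\begin{align*}
(I+A)^{\alpha}=I+\alpha X+\tfrac{\alpha^2}{2}X^2+o_{\HS}(\alpha^2),\qquad
(I+B)^{2\alpha}=I+2\alpha Y+2\alpha^2 Y^2+o_{\HS}(\alpha^2),
\end{align*}
with the analogous expansion for $(I+B)^{\alpha}$; the remainders are in fact $O_{\HS}(\alpha^3)$, by bounding the exponential tail via $\|X^k\|_{\HS}\le\|X\|^{k-1}\|X\|_{\HS}$ (operator norm) and similarly for $Y$. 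Set $C_{\alpha}=(I+A)^{\alpha}-I$, $E_{\alpha}=(I+B)^{\alpha}-I$ and $D_{\alpha}=(I+B)^{2\alpha}-I=2E_{\alpha}+E_{\alpha}^2$, all in $\HS(\H)$.

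Writing $M_{\alpha}=(I+A)^{\alpha}(I+B)^{2\alpha}(I+A)^{\alpha}=(I+C_{\alpha})(I+D_{\alpha})(I+C_{\alpha})$ and multiplying out, every term containing at least two of the Hilbert--Schmidt factors $C_{\alpha},D_{\alpha}$ is trace class, so $M_{\alpha}-I=2C_{\alpha}+D_{\alpha}+R_{\alpha}$ with $R_{\alpha}=C_{\alpha}^2+C_{\alpha}D_{\alpha}+D_{\alpha}C_{\alpha}+C_{\alpha}D_{\alpha}C_{\alpha}=\alpha^2(X^2+2XY+2YX)+o_{\Tr}(\alpha^2)$, using $\|ST\|_{\Tr}\le\|S\|_{\HS}\|T\|_{\HS}$. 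Since $\|M_{\alpha}-I\|\to0$ in operator norm, the binomial series $M_{\alpha}^{1/2}=I+\sum_{k\ge1}\binom{1/2}{k}(M_{\alpha}-I)^k$ converges; its terms with $k\ge2$ are trace class with $\|(M_{\alpha}-I)^k\|_{\Tr}\le\|M_{\alpha}-I\|_{\HS}^2\,\|M_{\alpha}-I\|^{k-2}$, so that $M_{\alpha}^{1/2}-I=\tfrac12(M_{\alpha}-I)-\tfrac18(M_{\alpha}-I)^2+o_{\Tr}(\alpha^2)$ with $(M_{\alpha}-I)^2=4\alpha^2(X+Y)^2+o_{\Tr}(\alpha^2)$. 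Substituting $\tfrac12(M_{\alpha}-I)=C_{\alpha}+E_{\alpha}+\tfrac{\alpha^2}{2}Y^2+\tfrac12 R_{\alpha}+o_{\Tr}(\alpha^2)$ (using $\tfrac12 D_{\alpha}=E_{\alpha}+\tfrac12E_{\alpha}^2$) and cancelling $C_{\alpha}=(I+A)^{\alpha}-I$ and $E_{\alpha}=(I+B)^{\alpha}-I$ leaves
\begin{align*}
T_{\alpha}:=M_{\alpha}^{1/2}-(I+A)^{\alpha}-(I+B)^{\alpha}+I=\tfrac{\alpha^2}{2}(XY+YX)+o_{\Tr}(\alpha^2),
\end{align*}
which is consistent with Proposition~\ref{proposition:HS-to-Tr-alpha}. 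Taking traces and using $\trace(XY)=\trace(YX)$ gives $\frac1{\alpha^2}\trace(T_{\alpha})\to\trace(XY)=\la\log(I+A),\log(I+B)\ra_{\HS}$, which is the first assertion.

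For general $\gamma>0$ and $(A+\gamma I),(B+\gamma I)\in\PC_2(\H)$, I would use $(A+\gamma I)^{\alpha}=\gamma^{\alpha}(I+A/\gamma)^{\alpha}$ and likewise for $B$, with $(I+A/\gamma),(I+B/\gamma)\in\PC_2(\H)$; a direct substitution turns the bracketed operator in the lemma into $\gamma^{2\alpha}\,T_{\alpha}^{(A/\gamma,\,B/\gamma)}$. Since $\gamma^{2\alpha}\to1$ as $\alpha\approach0$, the first part applied to $A/\gamma,B/\gamma$ yields the stated limit $\trace[\log(I+A/\gamma)\log(I+B/\gamma)]=\la\log(I+A/\gamma),\log(I+B/\gamma)\ra_{\HS}$.

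The main obstacle is keeping the bookkeeping in the correct norm throughout: the individual second-order pieces $C_{\alpha},D_{\alpha},(I+A)^{\alpha}-I$ are only Hilbert--Schmidt, so they have no trace, and the argument succeeds only because these pieces cancel in $T_{\alpha}$ (as guaranteed abstractly by Proposition~\ref{proposition:HS-to-Tr-alpha}), leaving products of at least two Hilbert--Schmidt operators. Every expansion must therefore be tracked simultaneously in operator norm (for the square-root series remainder), in Hilbert--Schmidt norm (for $C_{\alpha},D_{\alpha}$ and for $M_{\alpha}-I$), and in trace norm (for $R_{\alpha}$, $(M_{\alpha}-I)^2$, and $T_{\alpha}$), exploiting repeatedly the inequality $\|ST\|_{\Tr}\le\|S\|_{\HS}\|T\|_{\HS}$.
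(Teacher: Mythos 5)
Your proposal is correct and follows essentially the same route as the paper: a second-order expansion of $(I+A)^{\alpha}$, $(I+B)^{2\alpha}$ in $\alpha$, a series expansion of the square root of $M_{\alpha}=(I+A)^{\alpha}(I+B)^{2\alpha}(I+A)^{\alpha}$, cancellation of the merely Hilbert--Schmidt first-order pieces leaving $\tfrac{\alpha^2}{2}(XY+YX)$ in trace class, and reduction of the general $\gamma$ case to $\gamma=1$ via $(A+\gamma I)^{\alpha}=\gamma^{\alpha}(I+A/\gamma)^{\alpha}$. Your version is somewhat more explicit than the paper's about which norm (operator, Hilbert--Schmidt, or trace) each remainder is controlled in, which is a worthwhile refinement but not a different argument.
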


Combining Lemmas \ref{lemma:alpha-0-logHS-1} and \ref{lemma:alpha-0-logHS-2} with Theorem \ref{theorem:Pdistance-AgammaI}, we
arrive at the following infinite-dimensional generalization of Theorem 
\ref{theorem:limit-alpha-0-finite}.

\begin{theorem}
[\textbf{Limiting case - Log-Hilbert-Schmidt distance}]
\label{theorem:limiting-infinite}
Let $(A+\gamma I), (B+\gamma I) \in \PC_2(\H)$ be fixed. Then
\begin{align}
\lim_{\alpha \approach 0} d^{\alpha}_{\proHS}[(A+\gamma I), (B+\gamma I)]= 
||\log(A+\gamma I) - \log(B+\gamma I)||_{\eHS}.
\end{align}
\end{theorem}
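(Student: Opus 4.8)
The plan is to reduce the limit to the two auxiliary limits already established in Lemmas \ref{lemma:alpha-0-logHS-1} and \ref{lemma:alpha-0-logHS-2}, using the explicit expression for $(d^{\alpha}_{\proHS})^2$ in Theorem \ref{theorem:Pdistance-AgammaI}. Concretely, starting from the decomposition in Eq.(\ref{equation:dproHS-AI}), I would write
\begin{align*}
(d^{\alpha}_{\proHS}[(A+\gamma I),(B+\gamma I)])^2
&= \frac{\|(A+\gamma I)^{\alpha}\|^2_{\eHS}-\gamma^{2\alpha}}{\alpha^2} + \frac{\|(B+\gamma I)^{\alpha}\|^2_{\eHS}-\gamma^{2\alpha}}{\alpha^2}\\
&\quad - \frac{2}{\alpha^2}\trace\big[[(A+\gamma I)^{\alpha}(B+\gamma I)^{2\alpha}(A+\gamma I)^{\alpha}]^{1/2}-\gamma^{\alpha}(A+\gamma I)^{\alpha}-\gamma^{\alpha}(B+\gamma I)^{\alpha}+\gamma^{2\alpha}I\big],
\end{align*}
and take $\alpha \to 0$ termwise. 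By the second statement of Lemma \ref{lemma:alpha-0-logHS-1} the first two terms converge to $\|\log(I+A/\gamma)\|^2_{\HS}$ and $\|\log(I+B/\gamma)\|^2_{\HS}$ respectively, and by the second statement of Lemma \ref{lemma:alpha-0-logHS-2} the last term converges to $-2\,\la \log(I+A/\gamma),\log(I+B/\gamma)\ra_{\HS}$. Summing, the limit of $(d^{\alpha}_{\proHS})^2$ equals $\|\log(I+A/\gamma)-\log(I+B/\gamma)\|^2_{\HS}$.

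It then remains to identify this Hilbert-Schmidt norm with the extended Hilbert-Schmidt norm appearing in the statement. Here I would use $\log(A+\gamma I) = \log(\gamma I) + \log(I+A/\gamma) = (\log\gamma) I + \log(I+A/\gamma)$, so that $\log(A+\gamma I) - \log(B+\gamma I) = \log(I+A/\gamma) - \log(I+B/\gamma)$, which lies in $\HS(\H)$ (no multiple of $I$ survives). Since on $\HS(\H)$ the $\eHS$ and $\HS$ norms coincide, $\|\log(A+\gamma I)-\log(B+\gamma I)\|_{\eHS} = \|\log(I+A/\gamma)-\log(I+B/\gamma)\|_{\HS}$, matching the expression obtained above. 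Taking square roots (the quantity inside is a nonnegative real, and $d^{\alpha}_{\proHS}\geq 0$) yields $\lim_{\alpha\to 0} d^{\alpha}_{\proHS}[(A+\gamma I),(B+\gamma I)] = \|\log(A+\gamma I)-\log(B+\gamma I)\|_{\eHS}$, which is the claimed limiting (Log-Hilbert-Schmidt) distance.

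The only genuinely delicate point is the termwise passage to the limit: one must be sure that each of the three scalar quantities above has a finite limit in its own right (so that no $\infty - \infty$ cancellation is hidden), which is exactly the content of Lemmas \ref{lemma:alpha-0-logHS-1} and \ref{lemma:alpha-0-logHS-2}, together with Proposition \ref{proposition:HS-to-Tr-alpha} guaranteeing that the trace terms are well-defined for each $\alpha$. Given those, the argument is a short and routine combination; I do not expect any further obstacle, and the continuity of $x\mapsto\sqrt{x}$ on $\R^{+}$ handles the final square-root step.
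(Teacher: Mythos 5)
Your proposal is correct and follows essentially the same route as the paper: apply the decomposition from Theorem \ref{theorem:Pdistance-AgammaI}, pass to the limit termwise via Lemmas \ref{lemma:alpha-0-logHS-1} and \ref{lemma:alpha-0-logHS-2}, and identify $\|\log(I+A/\gamma)-\log(I+B/\gamma)\|_{\HS}$ with $\|\log(A+\gamma I)-\log(B+\gamma I)\|_{\eHS}$. Your explicit justification of that last identification (via $\log(A+\gamma I)=(\log\gamma)I+\log(I+A/\gamma)$, so the identity components cancel) is a small detail the paper leaves implicit.
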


The following, then, is the infinite-dimensional generalization of Theorem \ref{theorem:metric-family-finite}.
\begin{theorem}
	[\textbf{Parametrized family of distances - Positive definite Hilbert-Schmidt operators}]
	\label{theorem:metric-family-infinite-HS}
Let $\gamma > 0$ be fixed.
The function $d^{\alpha}_{\proHS}$, as defined in Eq.(\ref{equation:alpha-procustes-infinite-2}), is a metric 
on the set $\PC_2(\H)(\gamma) = \{A+\gamma I > 0: A \in \HS(\H)\}$
for all $\alpha \in \R$.
\end{theorem}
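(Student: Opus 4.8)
The plan is to verify the four metric axioms for $d^{\alpha}_{\proHS}$ on $\PC_2(\H)(\gamma)$, handling the limiting value $\alpha=0$ separately from $\alpha\neq 0$. For $\alpha=0$, Theorem~\ref{theorem:limiting-infinite} gives $d^{0}_{\proHS}[(A+\gamma I),(B+\gamma I)]=\|\log(A+\gamma I)-\log(B+\gamma I)\|_{\eHS}$. Since $\log$ restricts to an injective map of $\PC_2(\H)(\gamma)$ into the Hilbert space $(\HS_X(\H),\|\cdot\|_{\eHS})$ — one has $\log(A+\gamma I)=(\log\gamma)I+\log(I+A/\gamma)$ with $\log(I+A/\gamma)\in\Sym(\H)\cap\HS(\H)$, and $\log$ is injective on positive definite operators because $\exp$ is a left inverse — the function $d^{0}_{\proHS}$ is the pullback of a norm-induced metric under an injection, hence a metric; this recovers the Log-Hilbert-Schmidt metric of \cite{MinhSB:NIPS2014}.

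For $\alpha\neq 0$ I would argue directly from the Procrustes formulation of Definition~\ref{definition:alpha-procrustes-infinite-2}, so that $|\alpha|\,d^{\alpha}_{\proHS}[X,Y]=\inf_{S\in G}\|X^{\alpha}-Y^{\alpha}S\|_{\eHS}$ with $G:=\Ubb(\H)\cap\HS_X(\H)$. Two preliminary facts carry the argument. First, $G$ is a group under composition: for $S_i=I+R_i$ with $R_i\in\HS(\H)$, both $S_1S_2=I+(R_1+R_2+R_1R_2)$ and $S^{-1}=S^{*}=I+R^{*}$ lie in $G$. Second, for $X=A+\gamma I\in\PC_2(\H)(\gamma)$ one has $X^{\alpha}=\gamma^{\alpha}I+\gamma^{\alpha}C$ with $C=(I+A/\gamma)^{\alpha}-I\in\Sym(\H)\cap\HS(\H)$, hence $X^{\alpha}S\in\gamma^{\alpha}I+\HS(\H)$ for every $S\in G$; consequently every difference $X^{\alpha}S_1-Y^{\alpha}S_2$ ($S_1,S_2\in G$) lies in $\HS(\H)$, where $\|\cdot\|_{\eHS}$ coincides with $\|\cdot\|_{\HS}$, a norm invariant under right multiplication by any unitary on $\H$. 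With these in place, non-negativity and $d^{\alpha}_{\proHS}[X,X]=0$ (take $S=I$) are immediate; for the converse, if $d^{\alpha}_{\proHS}[X,Y]=0$ choose $S_n\in G$ with $\|X^{\alpha}-Y^{\alpha}S_n\|_{\eHS}\to 0$, note that $S_n\to M:=(Y^{\alpha})^{-1}X^{\alpha}$ in operator norm (as $(Y^{\alpha})^{-1}$ is bounded), deduce that $M$ is unitary and $Y^{\alpha}M=X^{\alpha}>0$, and conclude $M=I$ from the uniqueness of the polar decomposition of the invertible operator $X^{\alpha}$; then $X^{\alpha}=Y^{\alpha}$, and $X=Y$ because $Z\mapsto Z^{\alpha}$ is a bijection of $\PC_2(\H)(\gamma)$ onto $\PC_2(\H)(\gamma^{\alpha})$. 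Symmetry follows by replacing $S$ with $S^{-1}\in G$: $\|X^{\alpha}-Y^{\alpha}S\|_{\HS}=\|(X^{\alpha}-Y^{\alpha}S)S^{-1}\|_{\HS}=\|X^{\alpha}S^{-1}-Y^{\alpha}\|_{\HS}$, and taking the infimum over $S\in G$ interchanges $X$ and $Y$.

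The triangle inequality is the only substantial step, and it follows the classical Procrustes pattern. Given $X,Y,Z\in\PC_2(\H)(\gamma)$ and $\epsilon>0$, I would pick $S,S'\in G$ that are $\epsilon$-nearly optimal for $d^{\alpha}_{\proHS}[X,Y]$ and $d^{\alpha}_{\proHS}[Y,Z]$ respectively. Writing $X^{\alpha}-Z^{\alpha}(S'S)=(X^{\alpha}-Y^{\alpha}S)+(Y^{\alpha}-Z^{\alpha}S')S$ with both summands in $\HS(\H)$, the triangle inequality in $\HS(\H)$ together with $\|(Y^{\alpha}-Z^{\alpha}S')S\|_{\HS}=\|Y^{\alpha}-Z^{\alpha}S'\|_{\HS}$ gives $\|X^{\alpha}-Z^{\alpha}(S'S)\|_{\eHS}\leq\|X^{\alpha}-Y^{\alpha}S\|_{\eHS}+\|Y^{\alpha}-Z^{\alpha}S'\|_{\eHS}$; since $S'S\in G$, the left-hand side bounds $|\alpha|\,d^{\alpha}_{\proHS}[X,Z]$ from above, and letting $\epsilon\to 0$ yields subadditivity. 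I expect the main (and essentially only) difficulty to be the bookkeeping around the extended Hilbert-Schmidt structure: at each manipulation one must confirm that the operators being subtracted share the common scalar part $\gamma^{\alpha}$, so that the differences are genuine Hilbert-Schmidt operators and $\|\cdot\|_{\eHS}$ behaves as an honest unitarily invariant norm — and it is exactly the group property of $G$ that keeps every intermediate operator within the admissible class. As an aside, the $\alpha\neq 0$ case can alternatively be reduced to $\alpha=1/2$ via the identity $d^{\alpha}_{\proHS}[X,Y]=\tfrac{1}{2|\alpha|}\,d^{1/2}_{\proHS}[X^{2\alpha},Y^{2\alpha}]$ (read off from Theorem~\ref{theorem:Pdistance-AgammaI}) and the bijection $X\mapsto X^{2\alpha}$ of $\PC_2(\H)(\gamma)$ onto $\PC_2(\H)(\gamma^{2\alpha})$, but this merely shifts the work to $\alpha=1/2$, which is itself handled by the same direct argument.
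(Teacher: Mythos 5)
Your proposal is correct and follows essentially the same route as the paper: the $\alpha=0$ case is delegated to the Log-Hilbert-Schmidt metric, and for $\alpha\neq 0$ the triangle inequality is obtained by composing near-optimal alignments $S,S'$ in $\Ubb(\H)\cap\HS_X(\H)$ and using right-unitary invariance of $\|\cdot\|_{\HS}$ on the Hilbert-Schmidt differences, exactly as in the paper's proof. The only difference is that you spell out the definiteness and symmetry axioms (via the group structure of $\Ubb(\H)\cap\HS_X(\H)$ and uniqueness of the polar decomposition), which the paper asserts without detail; those added arguments are sound.
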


As a consequence of Corollary 
\ref{corollary:explicit-traceclass}, we obtain directly a family of distances on the set $\Sym^{+}(\H) \cap \Tr(\H)$ of positive trace class operators,
without the factor $\gamma I$, for all $\alpha \geq 1/2$.
\begin{theorem}
[\textbf{Parametrized family of distances - Positive trace class operators}]
\label{theorem:metric-family-infinite-Tr}
The function $d^{\alpha}_{\proHS}$, as given in Eq.(\ref{equation:explicit-traceclass}), is
a metric on the set $\Sym^{+}(\H) \cap \Tr(\H)$, for all $\alpha \geq 1/2$.
\end{theorem}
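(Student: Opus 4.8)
The plan is to verify the four metric axioms for $d^{\alpha}_{\proHS}$ on $\Sym^{+}(\H)\cap\Tr(\H)$ with $\alpha\geq 1/2$, using the Procrustes (optimization) representation as the main tool. First I would record that, by Lemma~\ref{lemma:trace-power-1}, every $A\in\Sym^{+}(\H)\cap\Tr(\H)$ with $\alpha\geq 1/2$ has $A^{\alpha}\in\Sym^{+}(\H)\cap\HS(\H)$ and $A^{2\alpha}\in\Sym^{+}(\H)\cap\Tr(\H)$, and by Lemma~\ref{lemma:trace-power-2} also $(A^{\alpha}B^{2\alpha}A^{\alpha})^{1/2}\in\Sym^{+}(\H)\cap\Tr(\H)$; hence the right-hand side of Eq.~(\ref{equation:explicit-traceclass}) is a finite non-negative quantity, and Corollary~\ref{corollary:Bures-alpha} lets me rewrite it as the Procrustes expression
\begin{align}
d^{\alpha}_{\proHS}(A,B)=\frac{1}{\alpha}\min_{U\in\Ubb(\H)}\|A^{\alpha}-B^{\alpha}U\|_{\HS},
\end{align}
with the minimum attained. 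Non-negativity is then immediate.

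For symmetry I would use that the non-zero eigenvalues of $A^{\alpha}B^{2\alpha}A^{\alpha}$ and of $B^{\alpha}A^{2\alpha}B^{\alpha}$ coincide (both agree off zero with the spectrum of $A^{2\alpha}B^{2\alpha}$), so $\trace[(A^{\alpha}B^{2\alpha}A^{\alpha})^{1/2}]=\trace[(B^{\alpha}A^{2\alpha}B^{\alpha})^{1/2}]$ and the formula in Eq.~(\ref{equation:explicit-traceclass}) is visibly invariant under swapping $A$ and $B$; alternatively, since $A^{\alpha},B^{\alpha}$ are self-adjoint and the Hilbert-Schmidt norm is invariant under adjoints and under unitary multiplication, $\|A^{\alpha}-B^{\alpha}U\|_{\HS}=\|B^{\alpha}-A^{\alpha}U^{*}\|_{\HS}$, and minimizing over $U$ interchanges $A$ and $B$. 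For the identity of indiscernibles, $A=B$ gives $d^{\alpha}_{\proHS}(A,B)=0$ by taking $U=I$; conversely, if $d^{\alpha}_{\proHS}(A,B)=0$ then, the minimum above being attained, there is $U\in\Ubb(\H)$ with $A^{\alpha}=B^{\alpha}U$, whence $A^{2\alpha}=A^{\alpha}(A^{\alpha})^{*}=B^{\alpha}UU^{*}B^{\alpha}=B^{2\alpha}$, and uniqueness of positive operator roots forces $A=B$.

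The triangle inequality is the crux. Given $A,B,C\in\Sym^{+}(\H)\cap\Tr(\H)$, I would take minimizers $U_{1},U_{2}\in\Ubb(\H)$ realizing $d^{\alpha}_{\proHS}(A,C)$ and $d^{\alpha}_{\proHS}(C,B)$, set $W=U_{2}U_{1}\in\Ubb(\H)$, and estimate
\begin{align}
\alpha\, d^{\alpha}_{\proHS}(A,B)\leq\|A^{\alpha}-B^{\alpha}W\|_{\HS}\leq\|A^{\alpha}-C^{\alpha}U_{1}\|_{\HS}+\|(C^{\alpha}-B^{\alpha}U_{2})U_{1}\|_{\HS},
\end{align}
where the last term equals $\|C^{\alpha}-B^{\alpha}U_{2}\|_{\HS}$ by unitary invariance, so $d^{\alpha}_{\proHS}(A,B)\leq d^{\alpha}_{\proHS}(A,C)+d^{\alpha}_{\proHS}(C,B)$ after dividing by $\alpha>0$. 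An equivalent and perhaps cleaner route is to invoke Theorem~\ref{theorem:metric-family-infinite-HS}, which already yields the triangle inequality for the shifted operators $A+\gamma I,\,B+\gamma I,\,C+\gamma I$ in $\PC_{2}(\H)(\gamma)$, and then let $\gamma\to 0^{+}$, using Corollary~\ref{corollary:explicit-traceclass} to see that each of the three distances converges to its trace-class value, so the inequality passes to the limit. The main obstacle I anticipate is not conceptual but a matter of ideal-theoretic bookkeeping: making sure that every operator that appears (the $\alpha$- and $2\alpha$-powers, the product $A^{\alpha}B^{2\alpha}A^{\alpha}$, its operator square root, and the differences $A^{\alpha}-B^{\alpha}U$) lies in the appropriate Schatten ideal so that all traces and Hilbert-Schmidt norms are finite — precisely what Lemmas~\ref{lemma:trace-power-1}, \ref{lemma:trace-power-2} and Corollary~\ref{corollary:Bures-alpha} provide — together with confirming that the Procrustes minimum is genuinely attained, which is what legitimizes forming the composite unitary $W=U_{2}U_{1}$ in the triangle-inequality step.
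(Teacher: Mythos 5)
Your proof is correct, and it is more detailed than what the paper actually supplies: the paper treats Theorem~\ref{theorem:metric-family-infinite-Tr} as an immediate consequence of Corollary~\ref{corollary:explicit-traceclass}, the implicit argument (mirroring the proof of Theorem~\ref{theorem:metric-family-finite}) being that
\begin{align*}
d^{\alpha}_{\proHS}(A,B)=\tfrac{1}{\alpha}\,d_{\BW}\bigl(A^{2\alpha},B^{2\alpha}\bigr)
=\tfrac{1}{\alpha}\,d_{\Wass}\bigl(\Ncal(0,A^{2\alpha}),\Ncal(0,B^{2\alpha})\bigr),
\end{align*}
so that all metric axioms are inherited at once from the Bures--Wasserstein metric (Theorem~\ref{theorem:Bures-traceclass}) together with the injectivity of $C\mapsto C^{2\alpha}$ on $\Sym^{+}(\H)\cap\Tr(\H)$, with Lemmas~\ref{lemma:trace-power-1} and \ref{lemma:trace-power-2} guaranteeing that $A^{2\alpha}$ lands back in $\Sym^{+}(\H)\cap\Tr(\H)$. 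Your direct axiomatic verification via the Procrustes representation is the same strategy the paper uses for the $\gamma>0$ case (Theorem~\ref{theorem:metric-family-infinite-HS}), and your composite-unitary triangle inequality is sound even if the minima are only infima (take near-minimizers and pass to the limit). One point to be careful about: your proof of definiteness leans on the minimum over $\Ubb(\H)$ being \emph{attained}, which in infinite dimensions is not automatic (the unitary group is not compact, and the polar isometry of $B^{\alpha}A^{\alpha}$ need not extend to a unitary); you are entitled to it here because Theorem~\ref{theorem:Bures-traceclass} and Corollary~\ref{corollary:Bures-alpha} assert it, but it can be avoided entirely by arguing from the explicit formula: $d^{\alpha}_{\proHS}(A,B)=0$ forces $||A^{\alpha}||_{\HS}^2+||B^{\alpha}||_{\HS}^2=2||B^{\alpha}A^{\alpha}||_{\tr}$, and since $d_{\BW}$ is a genuine metric this gives $A^{2\alpha}=B^{2\alpha}$, hence $A=B$. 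Also note that your alternative route of letting $\gamma\to 0^{+}$ in Theorem~\ref{theorem:metric-family-infinite-HS} only yields a pseudometric (positivity of the limit on distinct pairs does not pass to the limit for free), so the separate definiteness argument you give is genuinely needed there --- you correctly supply it.
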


\subsection{A parametrized family of distances between Gaussian measures on Hilbert spaces}
\label{section:distance-Gaussian}

We now generalize the parametrized family of distances between Gaussian measures on $\R^n$,
as described in Theorem \ref{theorem:metric-Gaussian-Rn}, to the infinite-dimensional setting.
Similar to the Euclidean space setting,
each distance/divergence on the set $\Sym^{+}(\H) \cap \Tr(\H)$ corresponds to a distance/divergence on 
the set of zero-mean Gaussian measures $\Ncal(0,C)$, $C \in \Sym^{+}(\H) \cap \Tr(\H)$, and vice versa. Similarly,
each distance/divergence on the set $\Sym^{++}(\H) \cap \Tr(\H)$ corresponds to a distance/divergence 
on the set of zero-mean {\it non-degenerate} Gaussian measures $\Ncal(0,C)$, $C \in \Sym^{++}(\H) \cap \Tr(\H)$, and vice versa.

Thus, as a consequence of Theorems \ref{theorem:metric-family-infinite-HS} and \ref{theorem:metric-family-infinite-Tr}, we have the following.
\begin{theorem}
	[\textbf{Alpha Procrustes distances between zero-mean Gaussian measures on Hilbert spaces}]
\label{theorem:metric-Gaussian-Hilbert-zeromean}
Let $\H$ be a separable Hilbert space, $C_1, C_2 \in \Sym^{+}(\H)\cap \Tr(\H)$.
Let $\gamma > 0$ be fixed. 
The following is a parametrized family of squared metrics on the set of zero-mean Gaussian measures on $\H$, $\alpha \in \R$,
\begin{align}
&(D^{\alpha}_{\proHS}[\Ncal(0,C_1)||\Ncal(0,C_2)])^2 
\\
& = \frac{1}{4\alpha^2}\trace[(C_1 + \gamma I)^{2\alpha} + (C_2 + \gamma I)^{2\alpha} - 2((C_1+\gamma I)^{\alpha}(C_2+\gamma I)^{2\alpha}(C_1+\gamma I)^{\alpha})^{1/2}].
\nonumber
\end{align}
Furthermor, for $\alpha \geq 1/2$, the following is a parametrized family of squared metrics on the set of zero-mean Gaussian measures on $\H$,
\begin{align}
&(D^{\alpha}_{\proHS}[\Ncal(0,C_1)||\Ncal(0,C_2)])^2 
= \frac{1}{4\alpha^2}\trace[C_1^{2\alpha} + C_2^{2\alpha} - 2(C_1^{\alpha}C_2^{2\alpha}C_1^{\alpha})^{1/2}].
\end{align}
\end{theorem}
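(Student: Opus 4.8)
The plan is to derive Theorem~\ref{theorem:metric-Gaussian-Hilbert-zeromean} directly from the correspondence between zero-mean Gaussian measures and their covariance operators, together with the metric properties already established in Theorems~\ref{theorem:metric-family-infinite-HS} and~\ref{theorem:metric-family-infinite-Tr}. First I would observe that the map $\Ncal(0,C) \mapsto C$ is a bijection between the set of zero-mean Gaussian measures on $\H$ and the set $\Sym^{+}(\H) \cap \Tr(\H)$ of positive trace class operators; under this bijection a metric on the operator side pulls back to a metric on the Gaussian side, since bijections transport the axioms of a metric (non-negativity, symmetry, triangle inequality, and vanishing exactly on the diagonal) without change. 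Thus it suffices to exhibit, for each admissible $\alpha$, a metric on the appropriate subset of $\Sym^{+}(\H) \cap \Tr(\H)$ whose value at $(C_1, C_2)$ equals $\tfrac12 d^{\alpha}_{\proHS}$ evaluated at the relevant operators, so that squaring and inserting the factor $\tfrac14$ produces the displayed formulas.

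For the first assertion ($\alpha \in \R$ arbitrary, fixed $\gamma > 0$), I would define $\Phi_\gamma: \Sym^{+}(\H)\cap\Tr(\H) \to \PC_2(\H)(\gamma)$ by $\Phi_\gamma(C) = C + \gamma I$; since $C \in \Tr(\H) \subset \HS(\H)$, indeed $C + \gamma I \in \PC_2(\H)(\gamma)$, and $\Phi_\gamma$ is injective. Pulling back the metric $d^{\alpha}_{\proHS}$ on $\PC_2(\H)(\gamma)$ from Theorem~\ref{theorem:metric-family-infinite-HS} along $\Phi_\gamma$ yields a metric on $\Sym^{+}(\H)\cap\Tr(\H)$, hence, via the Gaussian bijection, a metric on zero-mean Gaussian measures on $\H$. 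Setting $D^{\alpha}_{\proHS}[\Ncal(0,C_1)\,\|\,\Ncal(0,C_2)] = \tfrac12\, d^{\alpha}_{\proHS}[(C_1+\gamma I),(C_2+\gamma I)]$ and invoking the explicit expression in Eq.~(\ref{equation:dproHS-AI}) gives precisely the first displayed formula after squaring; the prefactor $\tfrac{1}{4\alpha^2}$ arises from the $\tfrac12$ in front of $d^{\alpha}_{\proHS}$ combined with the $\tfrac{1}{\alpha^2}$ already present in Eq.~(\ref{equation:dproHS-AI}). For the second assertion ($\alpha \geq 1/2$), I would instead use the $\gamma = 0$ version: by Lemmas~\ref{lemma:trace-power-1} and~\ref{lemma:trace-power-2} and Corollary~\ref{corollary:explicit-traceclass}, the function $d^{\alpha}_{\proHS}$ of Eq.~(\ref{equation:explicit-traceclass}) is defined and, by Theorem~\ref{theorem:metric-family-infinite-Tr}, is a metric directly on $\Sym^{+}(\H)\cap\Tr(\H)$, so the same pullback argument through the Gaussian bijection, with $D^{\alpha}_{\proHS} = \tfrac12 d^{\alpha}_{\proHS}$, yields the second displayed formula.

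The routine content is therefore just bookkeeping: verifying that the two-thirds of the metric axioms (symmetry, triangle inequality) and the separation property survive pullback along an injection, and substituting the explicit trace formulas; all the analytic substance — that these trace expressions are finite (Proposition~\ref{proposition:HS-to-Tr-alpha}, Lemmas~\ref{lemma:trace-power-1}--\ref{lemma:trace-power-2}) and that $d^{\alpha}_{\proHS}$ actually satisfies the triangle inequality — is already packaged in the cited theorems. The only point requiring a moment's care, and the closest thing to an obstacle, is making sure the correspondence used is a genuine bijection on the \emph{right} domain in each case: for the first part one works with all zero-mean Gaussians (covariances in $\Sym^{+}(\H)\cap\Tr(\H)$, possibly degenerate) and must confirm $C + \gamma I$ stays in $\PC_2(\H)(\gamma)$ even when $C$ is not strictly positive, which holds because $C \succeq 0$ forces $C + \gamma I \succeq \gamma I \succ 0$; and one should note that the factor $\gamma > 0$, though appearing in the formula, does not affect which pairs are at zero distance, since $\Phi_\gamma$ is injective for every $\gamma > 0$. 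With those remarks in place the proof is complete.
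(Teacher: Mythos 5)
Your proposal is correct and follows essentially the same route as the paper: the paper's own (very terse) proof likewise reduces everything to the metric properties of $d^{\alpha}_{\proHS}$ from Theorems~\ref{theorem:metric-family-infinite-HS} and~\ref{theorem:metric-family-infinite-Tr}, checking only that $d^{\alpha}_{\proHS}[(C_1+\gamma I),(C_2+\gamma I)]=0$ forces $C_1=C_2$, which is exactly your observation that $C\mapsto C+\gamma I$ is injective. Your additional bookkeeping (the factor $\tfrac12$ producing $\tfrac{1}{4\alpha^2}$, and the Gaussian--covariance bijection) is correct and merely makes explicit what the paper leaves implicit.
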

To deal with general Gaussian measures on $\H$, we first have the following result.
\begin{lemma} 
	\label{lemma:metric-quadratic-Gaussian}
	Let $\H$ be a separable Hilbert space, $m_1, m_2 \in \H$, $C_1, C_2 \in \Sym^{+}(\H)\cap \Tr(\H)$.
	Let $d_{\mean}$ be a metric on $\H$ and $d_{\cov}$ be  a metric on $\Sym^{+}(\H) \cap \Tr(\H)$.
	Then the following function is a metric on the set of Gaussian measures 
	on $\H$,
	\begin{align}
	D[\Ncal(m_1, C_1)||\Ncal(m_2,C_2)] = \sqrt{d_{\mean}^2(m_1, m_2) + d_{\cov}^2(C_1, C_2)}.
	\end{align}
\end{lemma}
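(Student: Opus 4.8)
The statement to prove (Lemma~\ref{lemma:metric-quadratic-Gaussian}) asserts that $D[\Ncal(m_1,C_1)\|\Ncal(m_2,C_2)] = \sqrt{d_{\mean}^2(m_1,m_2) + d_{\cov}^2(C_1,C_2)}$ is a metric on the set of Gaussian measures on $\H$, given that $d_{\mean}$ is a metric on $\H$ and $d_{\cov}$ is a metric on $\Sym^{+}(\H)\cap\Tr(\H)$.

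\textbf{Plan of proof.} The plan is to reduce the statement to an abstract fact about metrics: if $(X,d_X)$ and $(Y,d_Y)$ are metric spaces, then the function $d((x_1,y_1),(x_2,y_2)) = \sqrt{d_X^2(x_1,x_2) + d_Y^2(y_1,y_2)}$ is a metric on the product $X\times Y$. Since a Gaussian measure $\Ncal(m,C)$ on $\H$ is uniquely determined by the pair $(m,C)\in \H \times (\Sym^{+}(\H)\cap\Tr(\H))$, and conversely every such pair determines a Gaussian measure, the map $\Ncal(m,C)\mapsto (m,C)$ is a bijection onto the product space; pushing the product metric through this bijection gives exactly $D$. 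So the whole content is the product-metric fact.

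\textbf{Key steps.} First I would note the bijective correspondence between Gaussian measures on $\H$ and pairs $(m,C)$, so that $D$ is well-defined and it suffices to check the metric axioms for the function $d$ on $X\times Y$ with $X = \H$, $d_X = d_{\mean}$, $Y = \Sym^{+}(\H)\cap\Tr(\H)$, $d_Y = d_{\cov}$. Second, non-negativity and symmetry of $d$ are immediate from those properties of $d_X$ and $d_Y$. Third, $d((x_1,y_1),(x_2,y_2)) = 0$ iff $d_X^2(x_1,x_2) = 0$ and $d_Y^2(y_1,y_2)=0$ (both terms being non-negative), iff $x_1 = x_2$ and $y_1 = y_2$, i.e. iff the two Gaussian measures coincide. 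Fourth, the triangle inequality: writing $a_i = d_X(x_1,x_2)$-type quantities, one uses the triangle inequalities $d_X(x_1,x_3)\le d_X(x_1,x_2)+d_X(x_2,x_3)$ and $d_Y(y_1,y_3)\le d_Y(y_1,y_2)+d_Y(y_2,y_3)$ together with the Minkowski inequality in $\R^2$ (equivalently, monotonicity of $t\mapsto\sqrt{t}$ combined with the fact that $\sqrt{(a+a')^2+(b+b')^2}\le \sqrt{a^2+b^2}+\sqrt{a'^2+b'^2}$ for non-negative reals). This gives $d((x_1,y_1),(x_3,y_3))\le d((x_1,y_1),(x_2,y_2)) + d((x_2,y_2),(x_3,y_3))$.

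\textbf{Main obstacle.} There is no serious obstacle; the only point requiring a moment's care is the triangle inequality, where one must combine two separate triangle inequalities (in $\H$ and in the operator space) with the Minkowski/$\ell^2$-norm inequality on $\R^2$, rather than with a one-dimensional estimate. The cleanest phrasing is: for non-negative reals, the map $(a,b)\mapsto\|(a,b)\|_2$ is a norm on $\R^2$, $t\mapsto t$ restricted to $[0,\infty)^2$ is coordinatewise monotone in the sense needed, and $\|(a+a',b+b')\|_2 \le \|(a,b)\|_2 + \|(a',b')\|_2$; applying this with $(a,b) = (d_X(x_1,x_2), d_Y(y_1,y_2))$ and $(a',b') = (d_X(x_2,x_3), d_Y(y_2,y_3))$, and using monotonicity together with the componentwise triangle inequalities, yields the claim. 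I would also remark that the same argument shows $D$ is a metric on the larger set where $d_{\cov}$ is any metric on a set of covariance operators, which is how it will be used in conjunction with Theorems~\ref{theorem:metric-family-infinite-HS} and~\ref{theorem:metric-family-infinite-Tr}.
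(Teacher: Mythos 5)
Your proposal is correct and follows essentially the same route as the paper: the paper likewise reduces everything to the triangle inequality and establishes it by combining the componentwise triangle inequalities for $d_{\mean}$ and $d_{\cov}$ with the elementary $\R^2$ Minkowski inequality $\sqrt{(a+b)^2+(c+d)^2}\le\sqrt{a^2+c^2}+\sqrt{b^2+d^2}$ (stated there as a separate lemma). Nothing is missing.
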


For general Gaussian measures of the form $\Ncal(m, C)$, $m \in \H$,
motivated by Eq.~(\ref{equation:L2-Wasserstein}), Theorems \ref{theorem:metric-family-infinite-HS} and \ref{theorem:metric-family-infinite-Tr}, and Lemma \ref{lemma:metric-quadratic-Gaussian}, we arrive at the following result.

\begin{theorem}
	[\textbf{Alpha Procrustes distances between Gaussian measures on Hilbert spaces}]
	\label{theorem:metric-Gaussian-Hilbert}
	Let $\H$ be a separable Hilbert space, $m_1, m_2 \in \H$, $C_1, C_2 \in \Sym^{+}(\H)\cap \Tr(\H)$.
	Let $\gamma > 0$ be fixed. Let $d_{\mean}$ be a metric on $\H$.
	The following is a parametrized family of squared metrics on the set of Gaussian measures on $\H$,
	$\alpha \in \R$, 
	\begin{align}
	&(D^{\alpha}_{\proHS}[\Ncal(m_1, C_1)||\Ncal(m_2,C_2)])^2 = d_{\mean}^2(m_1, m_2)
	\\
	&+\frac{1}{4\alpha^2}\trace[(C_1 + \gamma I)^{2\alpha} + (C_2 + \gamma I)^{2\alpha} - 2((C_1+\gamma I)^{\alpha}(C_2+\gamma I)^{2\alpha}(C_1+\gamma I)^{\alpha})^{1/2}].
	\nonumber
	\end{align}
	Furthermore, for $\alpha \geq 1/2$, the following is a family of squared metrics  on the set of Gaussian measures on $\H$,
	\begin{align}
	(D^{\alpha}_{\proHS}[\Ncal(m_1, C_1)||\Ncal(m_2,C_2)])^2 &= d_{\mean}^2(m_1, m_2)
	\\
	&+\frac{1}{4\alpha^2}\trace[C_1^{2\alpha} + C_2^{2\alpha} - 2(C_1^{\alpha}C_2^{2\alpha}C_1^{\alpha})^{1/2}].
	\nonumber
	\end{align}
\end{theorem}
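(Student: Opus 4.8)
The plan is to assemble Theorem~\ref{theorem:metric-Gaussian-Hilbert} from pieces already established, treating it as essentially a corollary of Theorem~\ref{theorem:metric-family-infinite-HS}, Theorem~\ref{theorem:metric-family-infinite-Tr}, and Lemma~\ref{lemma:metric-quadratic-Gaussian}. First I would recall the standard identification: since a zero-mean Gaussian measure on $\H$ is determined by its covariance operator, which must be positive and trace class, the map $\Ncal(0,C) \mapsto C$ is a bijection between the set of zero-mean Gaussian measures on $\H$ and $\Sym^{+}(\H) \cap \Tr(\H)$. More generally, $\Ncal(m,C) \mapsto (m,C)$ is a bijection onto $\H \times (\Sym^{+}(\H) \cap \Tr(\H))$. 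Under this identification, any metric on $\Sym^{+}(\H) \cap \Tr(\H)$ transports to a metric on zero-mean Gaussian measures, and the product construction of Lemma~\ref{lemma:metric-quadratic-Gaussian} combines a mean-space metric with a covariance metric.

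Next I would verify that $d_{\cov}(C_1,C_2) := d^{\alpha}_{\proHS}[(C_1+\gamma I),(C_2+\gamma I)]$ is a genuine metric on $\Sym^{+}(\H)\cap\Tr(\H)$. For $C_1,C_2 \in \Sym^{+}(\H)\cap\Tr(\H)$ we have $(C_i+\gamma I) \in \PC_2(\H)(\gamma)$ because $C_i \in \Tr(\H) \subset \HS(\H)$ and $C_i + \gamma I > 0$; hence Theorem~\ref{theorem:metric-family-infinite-HS} applies and tells us $d^{\alpha}_{\proHS}$ restricted to this set is a metric for every $\alpha \in \R$. The only subtlety is nondegeneracy of the embedding $C \mapsto C + \gamma I$: this is injective, so symmetry, positivity, the triangle inequality, and the identity-of-indiscernibles property all descend from $\PC_2(\H)(\gamma)$ to $\Sym^{+}(\H)\cap\Tr(\H)$. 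Then Lemma~\ref{lemma:metric-quadratic-Gaussian}, applied with this $d_{\cov}$ and the given $d_{\mean}$, yields that $D^{\alpha}_{\proHS}[\Ncal(m_1,C_1)\|\Ncal(m_2,C_2)] = \sqrt{d_{\mean}^2(m_1,m_2) + d_{\cov}^2(C_1,C_2)}$ is a metric on the set of all Gaussian measures on $\H$; squaring and substituting the explicit trace expression from Theorem~\ref{theorem:Pdistance-AgammaI} (or directly from Eq.(\ref{equation:dproHS-AI})) gives the displayed formula. For the $\alpha \geq 1/2$ statement I would instead invoke Corollary~\ref{corollary:explicit-traceclass} and Theorem~\ref{theorem:metric-family-infinite-Tr}: for such $\alpha$, the quantity $\lim_{\gamma\approach 0} d^{\alpha}_{\proHS}[(C_1+\gamma I),(C_2+\gamma I)]$ equals $\frac{1}{\alpha}(\trace[C_1^{2\alpha}+C_2^{2\alpha}-2(C_1^{\alpha}C_2^{2\alpha}C_1^{\alpha})^{1/2}])^{1/2}$ and is a metric on $\Sym^{+}(\H)\cap\Tr(\H)$ directly, so Lemma~\ref{lemma:metric-quadratic-Gaussian} applies again with the factor $1/2$ absorbed into the $1/(4\alpha^2)$.

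I do not expect a serious obstacle here; the only point requiring a little care is making explicit that both $d_{\cov}$ candidates are well-defined and finite on all of $\Sym^{+}(\H)\cap\Tr(\H)$, which is exactly what Proposition~\ref{proposition:HS-to-Tr-alpha} and Lemmas~\ref{lemma:trace-power-1}--\ref{lemma:trace-power-2} guarantee (the relevant trace expressions lie in $\Tr(\H)$, so the traces are finite), and that the mean and covariance parameters of a Gaussian measure on $\H$ are independent, so that the product metric of Lemma~\ref{lemma:metric-quadratic-Gaussian} is being applied to a genuine product of the two parameter spaces. With those observations in place the theorem follows immediately by combining the cited results.
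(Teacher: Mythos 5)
Your proposal is correct and follows essentially the same route as the paper: the paper's own proof simply cites Theorem~\ref{theorem:metric-Gaussian-Hilbert-zeromean} (which itself rests on the injectivity of $C \mapsto C+\gamma I$ and the metric properties of $d^{\alpha}_{\proHS}$ from Theorems~\ref{theorem:metric-family-infinite-HS} and~\ref{theorem:metric-family-infinite-Tr}) together with Lemma~\ref{lemma:metric-quadratic-Gaussian}, which is exactly the assembly you describe, with the constants $\tfrac{1}{4}\cdot\tfrac{1}{\alpha^2}=\tfrac{1}{4\alpha^2}$ handled the same way.
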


{\bf Special cases}. 
For $\alpha =1/2$  and $d_{\mean}(m_1, m_2) = ||m_1 - m_2||$, where $||\;||$ is the Hilbert norm on $\H$,
we recover the {\bf $\L^2$-Wasserstein distance} between two Gaussian measures on $\H$, as given in Eq.(\ref{equation:L2-Wasserstein}).

As $\alpha \approach 0$, we obtain the following squared distance
\begin{align}
&(D^{\alpha}_{\logHS}[\Ncal(m_1,C_1)||\Ncal(m_2,C_2)])^2 = \lim_{\alpha \approach 0}(D^{\alpha}_{\proHS}[\Ncal(m_1, C_1)||\Ncal(m_2,C_2)])^2 
\nonumber
\\
&= d_{\mean}^2(m_1, m_2) + \frac{1}{4}||\log(C_1 + \gamma I) - \log(C_2 + \gamma I)||^2_{\eHS}.
\end{align}

\subsection{The reproducing kernel Hilbert space (RKHS) setting}
\label{section:RKHS}
We now
present
explicit expressions for the Alpha Procrustes distances
between RKHS covariance operators. We first compute distances between 
operators of the form $(AA^{*} + \gamma I), \gamma > 0$, where 
$AA^{*}$ is a trace class operator, which is automatically positive, on a separable Hilbert space.
The main idea here is to express $d^{\alpha}_{\proHS}[(AA^{*} + \gamma I), (BB^{*} + \gamma I )]$
in terms of quantities involving $A^{*}A$ and $B^{*}B$,
which are easier to compute in the RKHS setting.

To state our next result,
let $E:\H_1 \mapto \H_1$ be a self-adjoint, positive, compact operator
on a separable Hilbert space $\H_1$, with nonzero eigenvalues $\{\lambda_k(E)\}_{k=1}^{N_E}$, $1 \leq N_E \leq \infty$,
and corresponding orthonormal eigenvectors $\{\phi_k(E)\}_{k=1}^{N_E}$.
Consider the
following 
operator
\begin{align}
\label{equation:h-alpha}
&h_{\alpha}(E) = \sum_{k=1}^{N_E}\frac{(1+\lambda_k(E))^{\alpha}-1}{\lambda_k(E)}\phi_k(E) \otimes \phi_k(E), \;\;\text{(Lemma 10, \cite{Minh:LogDetIII2018})}.
\end{align}
When $\dim(\H_1) < \infty$, let $E = U_E\Sigma_E U_E^T$ be the reduced
singular value decomposition of $E$, where $U_E$ is a unitary matrix of size $\dim(\H_1) \times N_E$. Then
\begin{align}
\label{equation:h-alpha-finite}
h_{\alpha}(E) = U_E[(\Sigma_E + I_{N_E})^{\alpha} - I_{N_E}]\Sigma_E^{-1}U_E^T.
\end{align}
\begin{proposition}
	\label{proposition:dproHS-H1H2}
	Let $\H_1, \H_2$ be two separable Hilbert spaces.
	Let $A,B: \H_1 \mapto \H_2$ be compact operators 
	such that $A^{*}A, B^{*}B, A^{*}B: \H_1 \mapto \H_1$ are trace class operators. Then	
	\begin{align}
	&\alpha^2(d^{\alpha}_{\proHS}[(I_{\H_2}+AA^{*}), (I_{\H_2}+BB^{*})])^2
	\nonumber
	\\
	& = \trace[(I_{\H_1}+A^{*}A)^{2\alpha} - I_{\H_1}] + \trace[(I_{\H_1}+B^{*}B)^{2\alpha} - I_{\H_1}]
	\nonumber
	\\
	&- 2\trace\left[\left[I_{\H_1^3} 
	+
	\begin{pmatrix}
	C_{11} & C_{12} & C_{13}
	\\
	C_{21} & C_{22} & C_{23}
	\\
	C_{21} & C_{22} & C_{23}
	\end{pmatrix}
	\right]^{1/2} - I_{\H_1^3}\right],
	\end{align}
	where the operators $C_{ij}: \H_1 \mapto \H_1$, $i=1,2$, $j=1,2,3$, are given by
	\begin{align}
	&C_{11} = [(I_{\H_1} + A^{*}A)^{2\alpha} - I_{\H_1}], \; \; C_{12} = A^{*} Bh_{2\alpha}(B^{*}B),
	\\
	&C_{13} =[(I_{\H_1} + A^{*}A)^{2\alpha} - I_{\H_1}]A^{*}Bh_{2\alpha}(B^{*}B),
	\\
	&C_{21} =B^{*}Ah_{2\alpha}(A^{*}A), \; \; C_{22} =[(I_{\H_1} + B^{*}B)^{2\alpha} - I_{\H_1}],
	\\
	&C_{23} = B^{*}Ah_{2\alpha}(A^{*}A)A^{*}Bh_{2\alpha}(B^{*}B).
	\end{align}
\end{proposition}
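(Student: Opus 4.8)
The plan is to reduce the claim to the explicit formula of Theorem~\ref{theorem:Pdistance-AI-HS} and then transfer every trace appearing there from $\H_2$ to $\H_1$ and $\H_1^3$. First, since $A,B$ are compact with $A^{*}A,B^{*}B$ trace class, both $A$ and $B$ are Hilbert--Schmidt, $A^{*}B$ is automatically trace class, and $AA^{*},BB^{*}\in\Sym^{+}(\H_2)\cap\Tr(\H_2)$, so $(I_{\H_2}+AA^{*}),(I_{\H_2}+BB^{*})\in\PC_1(\H_2)\subset\PC_2(\H_2)$. Because $AA^{*}$ is trace class and $|(1+\lambda)^{\beta}-1|=O(\lambda)$ as $\lambda\to0$, the operators $(I_{\H_2}+AA^{*})^{\beta}-I_{\H_2}$ on $\H_2$ and $(I_{\H_1}+A^{*}A)^{\beta}-I_{\H_1}$ on $\H_1$ are trace class for every $\beta\in\R$ (likewise for $B$), and the square-root term is trace class by Proposition~\ref{proposition:HS-to-Tr-alpha}. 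Applying Eq.~(\ref{equation:min-AI-HS-3}) of Theorem~\ref{theorem:Pdistance-AI-HS} with $A\mapsto AA^{*}$, $B\mapsto BB^{*}$ writes $\alpha^2\bigl(d^{\alpha}_{\proHS}[(I_{\H_2}+AA^{*}),(I_{\H_2}+BB^{*})]\bigr)^2$ as
\[
\trace[(I_{\H_2}+AA^{*})^{2\alpha}-I_{\H_2}]+\trace[(I_{\H_2}+BB^{*})^{2\alpha}-I_{\H_2}]-2\,\trace\bigl([(I_{\H_2}+AA^{*})^{\alpha}(I_{\H_2}+BB^{*})^{2\alpha}(I_{\H_2}+AA^{*})^{\alpha}]^{1/2}-I_{\H_2}\bigr).
\]

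For the first two terms I would use that $AA^{*}$ and $A^{*}A$ (resp.\ $BB^{*}$ and $B^{*}B$) have the same non-zero eigenvalues with the same multiplicities and that $(1+\lambda)^{2\alpha}-1$ vanishes at $\lambda=0$; this gives $\trace[(I_{\H_2}+AA^{*})^{2\alpha}-I_{\H_2}]=\trace[(I_{\H_1}+A^{*}A)^{2\alpha}-I_{\H_1}]$ and the analogue for $B$, i.e.\ the first two traces in the statement.

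The main step is the cross term. The key identity is $(I_{\H_2}+AA^{*})^{\beta}-I_{\H_2}=A\,h_{\beta}(A^{*}A)\,A^{*}$ for all $\beta\in\R$ (and its analogue for $B$), which follows from the correspondence $A\psi_k=\sqrt{\lambda_k}\,\phi_k$ relating the eigenpairs $(\lambda_k,\psi_k)$ of $A^{*}A$ and $(\lambda_k,\phi_k)$ of $AA^{*}$ together with the definition~(\ref{equation:h-alpha}) of $h_{\beta}$ (Lemma~10 of \cite{Minh:LogDetIII2018}). Writing $T_A=A\,h_{2\alpha}(A^{*}A)\,A^{*}=(I_{\H_2}+AA^{*})^{2\alpha}-I_{\H_2}$ and $T_B=B\,h_{2\alpha}(B^{*}B)\,B^{*}=(I_{\H_2}+BB^{*})^{2\alpha}-I_{\H_2}$, one has $(I_{\H_2}+AA^{*})^{2\alpha}(I_{\H_2}+BB^{*})^{2\alpha}-I_{\H_2}=T_A+T_B+T_AT_B$, and I would factor this as $\mathbf{R}\mathbf{L}$ with
\begin{align*}
\mathbf{L}&=\begin{pmatrix}A^{*}\\ B^{*}\\ B^{*}\end{pmatrix}:\H_2\to\H_1^3,\\
\mathbf{R}&=\begin{pmatrix}A\,h_{2\alpha}(A^{*}A) & B\,h_{2\alpha}(B^{*}B) & A\,h_{2\alpha}(A^{*}A)\,A^{*}B\,h_{2\alpha}(B^{*}B)\end{pmatrix}:\H_1^3\to\H_2,
\end{align*}
both Hilbert--Schmidt. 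A direct block computation, using $A^{*}A\,h_{2\alpha}(A^{*}A)=(I_{\H_1}+A^{*}A)^{2\alpha}-I_{\H_1}$ and the analogue for $B$, then shows that $\mathbf{L}\mathbf{R}$ equals exactly the $3\times3$ operator matrix $\mathbf{C}=(C_{ij})$ of the statement (its two coincident rows coming from the repeated $B^{*}$ in $\mathbf{L}$), which has trace-class entries and is therefore trace class on $\H_1^3$. To conclude, set $M=(I_{\H_2}+AA^{*})^{\alpha}(I_{\H_2}+BB^{*})^{2\alpha}(I_{\H_2}+AA^{*})^{\alpha}$ and $N=(I_{\H_2}+AA^{*})^{2\alpha}(I_{\H_2}+BB^{*})^{2\alpha}$: $M$ is positive, self-adjoint and invertible with $M-I_{\H_2}\in\Tr(\H_2)$, and $M=S^{-1}NS$ with $S=(I_{\H_2}+AA^{*})^{\alpha}$, so $N$ has positive spectrum and $M-I_{\H_2}$, $N-I_{\H_2}=\mathbf{R}\mathbf{L}$ have the same eigenvalues with the same algebraic multiplicities. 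By the standard $XY\leftrightarrow YX$ correspondence for compact operators, $\mathbf{R}\mathbf{L}$ and $\mathbf{L}\mathbf{R}=\mathbf{C}$ have the same non-zero eigenvalues with the same algebraic multiplicities; in particular $\mathrm{spec}(\mathbf{C})\subset(-1,\infty)$. Finally, for $f(z)=(1+z)^{1/2}-1$, which is holomorphic on a neighbourhood of $\mathrm{spec}(M-I_{\H_2})$ and of $\mathrm{spec}(\mathbf{C})$ and vanishes at $0$, we have $M^{1/2}-I_{\H_2}=f(M-I_{\H_2})$ and $(I_{\H_1^3}+\mathbf{C})^{1/2}-I_{\H_1^3}=f(\mathbf{C})$, both trace class, and by Lidskii's theorem their traces are the sums of $f$ over the respective eigenvalue lists; since these lists coincide away from $0$ and $f(0)=0$, the two traces are equal. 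Substituting back into the displayed formula yields the claim.

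The main obstacle is not the algebra --- once guessed, the identity $(I+AA^{*})^{\beta}-I=Ah_{\beta}(A^{*}A)A^{*}$ and the factorization $\mathbf{R}\mathbf{L}$, $\mathbf{L}\mathbf{R}=\mathbf{C}$ are short --- but the analytic bookkeeping: verifying that every operator in sight has the form $I+(\text{trace class})$ so that all traces converge and Proposition~\ref{proposition:HS-to-Tr-alpha} applies, and justifying the ``trace $=$ sum of eigenvalues'' step for the \emph{non}-self-adjoint operator $\mathbf{C}$, which requires locating $\mathrm{spec}(\mathbf{C})$ in $(-1,\infty)$, using the $XY\leftrightarrow YX$ correspondence with algebraic multiplicities, and invoking the holomorphic functional calculus together with Lidskii's theorem in place of the spectral theorem.
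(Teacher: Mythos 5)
Your proof is correct and follows essentially the same route as the paper's: reduce to Theorem \ref{theorem:Pdistance-AI-HS}, use the identity $(I_{\H_2}+AA^{*})^{\beta}-I_{\H_2}=Ah_{\beta}(A^{*}A)A^{*}$, factor the cross term as $\mathbf{R}\mathbf{L}$ with $\mathbf{L}\mathbf{R}$ equal to the block operator matrix $(C_{ij})$, and transfer the trace via the coincidence of non-zero eigenvalues of $\mathbf{R}\mathbf{L}$ and $\mathbf{L}\mathbf{R}$. The only difference is that you make explicit the analytic justifications (the similarity $M=S^{-1}NS$ replacing the symmetric square root by the product form, and Lidskii's theorem with the holomorphic functional calculus for the non-self-adjoint block matrix) that the paper leaves implicit.
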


\begin{corollary}
	\label{corollary:dproHS-H1H2-gamma}
	Let $\H_1, \H_2$ be two separable Hilbert spaces.
	Let $A,B: \H_1 \mapto \H_2$ be compact operators 
	such that $A^{*}A, B^{*}B, A^{*}B: \H_1 \mapto \H_1$ are trace class operators. Then	
	for any $\gamma > 0$,
	\begin{align}
	&\alpha^2(d^{\alpha}_{\proHS}[(AA^{*} + \gamma I_{\H_2}), (BB^{*} + \gamma I_{\H_2})])^2
	\nonumber
	\\
	& = \trace[(A^{*}A + \gamma I_{\H_1})^{2\alpha} - \gamma^{2\alpha}I_{\H_1}] + \trace[(B^{*}B + \gamma I_{\H_1})^{2\alpha} - \gamma^{2\alpha}I_{\H_1}]
	\nonumber
	\\
	&- 2\gamma^{2\alpha}\trace\left[\left[I_{\H_1^3} 
	+
	\begin{pmatrix}
	C_{11} & C_{12} & C_{13}
	\\
	C_{21} & C_{22} & C_{23}
	\\
	C_{21} & C_{22} & C_{23}
	\end{pmatrix}
	\right]^{1/2} - I_{\H_1^3}\right],
	\end{align}
	where the operators $C_{ij}$, $i=1,2$, $j=1,2,3$, are given by
	\begin{align}
	&C_{11} = [(I_{\H_1} + A^{*}A/\gamma)^{2\alpha} - I_{\H_1}], \; \; C_{12} = \frac{1}{\gamma}A^{*} Bh_{2\alpha}(B^{*}B/\gamma),
	\\
	&C_{13} =\frac{1}{\gamma}[(I_{\H_1} + A^{*}A/\gamma)^{2\alpha} - I_{\H_1}]A^{*}Bh_{2\alpha}(B^{*}B/\gamma),
	\\
	&C_{21} =\frac{1}{\gamma}B^{*}Ah_{2\alpha}(A^{*}A/\gamma), \; \; C_{22} =[(I_{\H_1} + B^{*}B/\gamma)^{2\alpha} - I_{\H_1}],
	\\
	&C_{23} = \frac{1}{\gamma^2}B^{*}Ah_{2\alpha}(A^{*}A/\gamma)A^{*}Bh_{2\alpha}(B^{*}B/\gamma).
	\end{align}
\end{corollary}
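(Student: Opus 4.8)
The plan is to obtain the general $\gamma > 0$ formula from the $\gamma = 1$ case in Proposition~\ref{proposition:dproHS-H1H2} by a scaling argument. Set $\tilde{A} = A/\sqrt{\gamma}$ and $\tilde{B} = B/\sqrt{\gamma}$; these are again compact operators $\H_1 \mapto \H_2$ with $\tilde{A}^{*}\tilde{A} = A^{*}A/\gamma$, $\tilde{B}^{*}\tilde{B} = B^{*}B/\gamma$, $\tilde{A}^{*}\tilde{B} = A^{*}B/\gamma$ all trace class on $\H_1$, so that Proposition~\ref{proposition:dproHS-H1H2} applies to the pair $\tilde{A}, \tilde{B}$. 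Moreover $\tilde{A}\tilde{A}^{*} = AA^{*}/\gamma$ and $\tilde{B}\tilde{B}^{*} = BB^{*}/\gamma$, whence $AA^{*} + \gamma I_{\H_2} = \gamma\,(I_{\H_2} + \tilde{A}\tilde{A}^{*})$ and $BB^{*} + \gamma I_{\H_2} = \gamma\,(I_{\H_2} + \tilde{B}\tilde{B}^{*})$.

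First I would record the homogeneity identity
\[
d^{\alpha}_{\proHS}[(AA^{*}+\gamma I_{\H_2}),\,(BB^{*}+\gamma I_{\H_2})] = \gamma^{\alpha}\, d^{\alpha}_{\proHS}[(I_{\H_2}+\tilde{A}\tilde{A}^{*}),\,(I_{\H_2}+\tilde{B}\tilde{B}^{*})].
\]
This is immediate from Definition~\ref{definition:alpha-procrustes-infinite-2}: since $(cT)^{\alpha} = c^{\alpha}T^{\alpha}$ for $c > 0$ in the functional calculus, both $(AA^{*}+\gamma I)^{\alpha}$ and $(BB^{*}+\gamma I)^{\alpha}(I+U)$ equal $\gamma^{\alpha}$ times the corresponding operators built from $I+\tilde{A}\tilde{A}^{*}$ and $I+\tilde{B}\tilde{B}^{*}$; as the $\eHS$-norm is homogeneous of degree one and the factor $\gamma^{\alpha} > 0$ is independent of $U$, the two minimizations over $(I+U) \in \Ubb(\H_2) \cap \HS_X(\H_2)$ coincide up to that factor. (Equivalently, one can read this off the explicit trace formula of Theorem~\ref{theorem:Pdistance-AgammaI}, using Proposition~\ref{proposition:HS-to-Tr-alpha} to justify pulling $\gamma^{2\alpha}$ out of the trace of the combination — which is trace class although its three summands individually are not.) Squaring and multiplying by $\alpha^{2}$ reduces the corollary to evaluating $\alpha^{2}\,(d^{\alpha}_{\proHS}[(I_{\H_2}+\tilde{A}\tilde{A}^{*}),(I_{\H_2}+\tilde{B}\tilde{B}^{*})])^{2}$ and then multiplying the outcome by $\gamma^{2\alpha}$.

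For that evaluation I would apply Proposition~\ref{proposition:dproHS-H1H2} verbatim to $\tilde{A}, \tilde{B}$, obtaining $\trace[(I_{\H_1}+\tilde{A}^{*}\tilde{A})^{2\alpha} - I_{\H_1}] + \trace[(I_{\H_1}+\tilde{B}^{*}\tilde{B})^{2\alpha} - I_{\H_1}] - 2\trace[[I_{\H_1^3} + \tilde{C}]^{1/2} - I_{\H_1^3}]$, where $\tilde{C}$ is the $3\times 3$ block operator of that proposition with $A, B$ replaced throughout by $\tilde{A}, \tilde{B}$. Substituting $\tilde{A}^{*}\tilde{A} = A^{*}A/\gamma$, $\tilde{B}^{*}\tilde{B} = B^{*}B/\gamma$, $\tilde{A}^{*}\tilde{B} = A^{*}B/\gamma$, $h_{2\alpha}(\tilde{A}^{*}\tilde{A}) = h_{2\alpha}(A^{*}A/\gamma)$ and $h_{2\alpha}(\tilde{B}^{*}\tilde{B}) = h_{2\alpha}(B^{*}B/\gamma)$ turns $\tilde{C}$ into exactly the block operator with entries $C_{11},\dots,C_{23}$ displayed in the statement, the powers $1/\gamma$ and $1/\gamma^{2}$ in $C_{12},\dots,C_{23}$ being the collected factors $1/\sqrt{\gamma}$ from the occurrences of $A$ and $B$. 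Finally, distributing $\gamma^{2\alpha}$ gives $\gamma^{2\alpha}\trace[(I_{\H_1}+A^{*}A/\gamma)^{2\alpha} - I_{\H_1}] = \trace[(A^{*}A+\gamma I_{\H_1})^{2\alpha} - \gamma^{2\alpha}I_{\H_1}]$, likewise for the $B$-term, and $2\gamma^{2\alpha}\trace[[I_{\H_1^3}+C]^{1/2} - I_{\H_1^3}]$ for the last term, which is the asserted identity.

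The argument is in essence bookkeeping, and there is no substantial obstacle. The one point deserving care is the homogeneity step: if one proceeds via the explicit trace formula rather than directly from the definition, one may only pull $\gamma^{2\alpha}$ out of the trace of the full combination and not out of its three individually non-trace-class summands, which is legitimate precisely because that combination is trace class by Proposition~\ref{proposition:HS-to-Tr-alpha}. Arguing directly from Definition~\ref{definition:alpha-procrustes-infinite-2} as above sidesteps this entirely.
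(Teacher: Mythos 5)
Your proposal is correct and follows essentially the same route as the paper: the homogeneity identity $d^{\alpha}_{\proHS}[(AA^{*}+\gamma I),(BB^{*}+\gamma I)] = \gamma^{\alpha}d^{\alpha}_{\proHS}[(AA^{*}/\gamma+I),(BB^{*}/\gamma+I)]$ (the paper's Lemma \ref{lemma:Pdistance-decompose}) followed by an application of Proposition \ref{proposition:dproHS-H1H2}. Your explicit rescaling $\tilde{A}=A/\sqrt{\gamma}$, $\tilde{B}=B/\sqrt{\gamma}$ merely spells out why that proposition applies to $AA^{*}/\gamma$ and where the factors $1/\gamma$, $1/\gamma^{2}$ in the $C_{ij}$ come from, a detail the paper leaves implicit.
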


Since $AA^{*}, BB^{*}$ are positive trace class operators on $\H_2$, by Corollary \ref{corollary:explicit-traceclass}, we have
for $\alpha \geq 1/2$, the simplified expression
\begin{align}
&d^{\alpha}_{\proHS}[(AA^{*}), (BB^{*})] = \lim_{\gamma \approach 0}d^{\alpha}_{\proHS}[(AA^{*}+\gamma I_{\H_2}), (BB^{*}+\gamma I_{\H_2})] 
\nonumber
\\
&=\frac{\trace[(AA^{*})^{2\alpha} + (BB^{*})^{2\alpha} - 2((AA^{*})^{\alpha}(BB^{*})^{2\alpha}(AA^{*})^{\alpha})^{1/2}]^{1/2}}{\alpha}
\nonumber
\\
&= \frac{\trace[(AA^{*})^{2\alpha} + (BB^{*})^{2\alpha} - 2((AA^{*})^{2\alpha}(BB^{*})^{2\alpha})^{1/2}]^{1/2}}{\alpha}.
\end{align}

Consequently, the expression given in Corollary \ref{corollary:dproHS-H1H2-gamma}
can also be considerably simplified as follows.
\begin{corollary}
	\label{corollary:dproHS-H1H2-gamma-0}
	Assume the hypothesis of Corollary \ref{corollary:dproHS-H1H2-gamma}. For $\alpha \geq 1/2$,
	\begin{align}
	&d^{\alpha}_{\proHS}[(AA^{*}), (BB^{*})] =
	\lim_{\gamma \approach 0}d^{\alpha}_{\proHS}[(AA^{*}+\gamma I_{\H_2}), (BB^{*}+\gamma I_{\H_2})] 
	\nonumber
	\\
	&= \frac{1}{\alpha}\trace[(A^{*}A)^{2\alpha} + (B^{*}B)^{2\alpha} - 2(B^{*}A(A^{*}A)^{2\alpha-1}A^{*}B(B^{*}B)^{2\alpha-1})^{1/2}]^{1/2}.
	\end{align}
	In particular, for $\alpha = 1/2$, as can be verified directly,
	\begin{align}
	&d^{1/2}_{\proHS}[(AA^{*}), (BB^{*})] = \lim_{\gamma \approach 0}d^{1/2}_{\proHS}[(AA^{*}+\gamma I_{\H_2}), (BB^{*}+\gamma I_{\H_2})] 
	\nonumber
	\\
	&= 2[\trace(A^{*}A) + \trace(B^{*}B) - 2 \trace(B^{*}AA^{*}B)^{1/2}]^{1/2}
	= 2d_{\BW}[(AA^{*}), (B^{*}B)].
	\end{align}
\end{corollary}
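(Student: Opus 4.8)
The plan is to start from the already-established simplification (valid for $\alpha \geq 1/2$, obtained from Corollary~\ref{corollary:explicit-traceclass} together with the commuting relation $((AA^{*})^{\alpha}(BB^{*})^{2\alpha}(AA^{*})^{\alpha})^{1/2}$ vs $((AA^{*})^{2\alpha}(BB^{*})^{2\alpha})^{1/2}$ displayed just before the corollary):
\[
\alpha\, d^{\alpha}_{\proHS}[(AA^{*}), (BB^{*})] = \trace[(AA^{*})^{2\alpha} + (BB^{*})^{2\alpha} - 2((AA^{*})^{2\alpha}(BB^{*})^{2\alpha})^{1/2}]^{1/2}.
\]
The whole task is therefore to rewrite each of the three traces on the right in terms of $A^{*}A$, $B^{*}B$, and the cross operators $A^{*}B$, $B^{*}A$ acting on $\H_1$. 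First I would handle the two ``diagonal'' terms: since the nonzero eigenvalues of $AA^{*}$ on $\H_2$ coincide with those of $A^{*}A$ on $\H_1$ (with multiplicities), and $t \mapsto t^{2\alpha}$ fixes $0$, we get $\trace[(AA^{*})^{2\alpha}] = \trace[(A^{*}A)^{2\alpha}]$, and likewise for $B$. This is the routine part.

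The substantive step is the cross term $\trace[((AA^{*})^{2\alpha}(BB^{*})^{2\alpha})^{1/2}]$. The key identity I would use is that for any operators $M, N$ the nonzero spectrum of $MN$ equals the nonzero spectrum of $NM$, hence $\trace[(MN)^{1/2}] = \trace[(NM)^{1/2}]$ whenever these are well-defined (here both $(AA^{*})^{2\alpha}$ and $(BB^{*})^{2\alpha}$ are positive trace class by Lemma~\ref{lemma:trace-power-1}, so the products are trace class and the square roots make sense in the sense of Eq.~(\ref{equation:equivalent})). I would then peel off the outer factors: write $(AA^{*})^{2\alpha} = (AA^{*})^{2\alpha-1}(AA^{*}) = (AA^{*})^{2\alpha-1}AA^{*}$ and $(BB^{*})^{2\alpha} = BB^{*}(BB^{*})^{2\alpha-1}$, so that
\[
(AA^{*})^{2\alpha}(BB^{*})^{2\alpha} = (AA^{*})^{2\alpha-1}\,A\,(A^{*}B)\,(BB^{*})^{2\alpha-1}\,B^{*}.
\]
Now move the trailing $B^{*}$ to the front under the trace-of-square-root (using the $MN \leftrightarrow NM$ principle again with $M = (AA^{*})^{2\alpha-1}A(A^{*}B)(BB^{*})^{2\alpha-1}$ and $N = B^{*}$), landing on $\trace[(B^{*}(AA^{*})^{2\alpha-1}A\,(A^{*}B)(BB^{*})^{2\alpha-1})^{1/2}]$, and finally use $B^{*}(AA^{*})^{2\alpha-1}A = B^{*}A(A^{*}A)^{2\alpha-1}$ and $(A^{*}B)(BB^{*})^{2\alpha-1} = (A^{*}B)(B^{*}B)^{2\alpha-1} = A^{*}B(B^{*}B)^{2\alpha-1}$, which are the intertwining identities $f(AA^{*})A = A f(A^{*}A)$ and $A^{*} f(AA^{*}) = f(A^{*}A)A^{*}$ applied to $f(t) = t^{2\alpha-1}$. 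This yields exactly $\trace[(B^{*}A(A^{*}A)^{2\alpha-1}A^{*}B(B^{*}B)^{2\alpha-1})^{1/2}]$, the cross term in the claimed formula, and everything now lives on $\H_1$.

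The main obstacle is purely one of bookkeeping the analytic hypotheses: one must check that each operator appearing inside a square root or a trace is genuinely trace class (or at least that the intertwining identities $f(AA^{*})A = Af(A^{*}A)$ hold for the fractional power $f(t)=t^{2\alpha-1}$, which requires $2\alpha - 1 \geq 0$, i.e.\ $\alpha \geq 1/2$ — precisely the standing assumption), and that $A^{*}A, B^{*}B, A^{*}B$ trace class (given) propagates through all the products via Hölder-type inequalities for Schatten norms and Lemmas~\ref{lemma:trace-power-1}--\ref{lemma:trace-power-2}. Once these memberships are in place, the $MN \leftrightarrow NM$ trace-of-square-root identity is applied finitely many times and the result follows. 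For the final special case $\alpha = 1/2$, setting $2\alpha - 1 = 0$ collapses $(A^{*}A)^{2\alpha-1} = I_{\H_1}$ and $(B^{*}B)^{2\alpha-1} = I_{\H_1}$, giving $d^{1/2}_{\proHS}[(AA^{*}),(BB^{*})] = 2[\trace(A^{*}A) + \trace(B^{*}B) - 2\trace(B^{*}AA^{*}B)^{1/2}]^{1/2}$, and recognizing $\trace(B^{*}AA^{*}B)^{1/2} = \trace((AA^{*})^{1/2}(BB^{*})(AA^{*})^{1/2})^{1/2}$ via the same spectral argument identifies this with $2 d_{\BW}[(AA^{*}),(BB^{*})]$ by Eq.~(\ref{equation:Bures-finite}).
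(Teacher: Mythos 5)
Your argument is correct in substance and reaches the stated formula, but it takes a genuinely different route from the paper's. The paper proves the corollary by passing to the limit $\gamma \to 0$ in the block-operator expression of Corollary \ref{corollary:dproHS-H1H2-gamma}: Lemma \ref{lemma:h-alpha-gamma-0} gives $\gamma^{2\alpha-1}h_{2\alpha}(E/\gamma)\to E^{2\alpha-1}$ (this is exactly where $\alpha\ge 1/2$ enters there), the suitably rescaled $3\times 3$ block operator converges to one supported on its third column, and Lemma \ref{lemma:eigen-block-matrix} identifies the nonzero spectrum of that limit with that of the single entry $B^{*}A(A^{*}A)^{2\alpha-1}A^{*}B(B^{*}B)^{2\alpha-1}$. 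You instead start from the already-derived trace formula on $\H_2$ (the display preceding the corollary, a consequence of Corollary \ref{corollary:explicit-traceclass}) and transport it to $\H_1$ using only the invariance of the nonzero spectrum under $MN\mapsto NM$ together with the intertwining identity $f(AA^{*})A=Af(A^{*}A)$ for $f(t)=t^{2\alpha-1}$. Your route is shorter and dispenses with the block matrix altogether; the paper's route has the merit of exhibiting the corollary as the literal $\gamma\to 0$ limit of the computable Gram-matrix expression actually used in the RKHS section, so the two derivations serve somewhat different purposes.

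One intermediate display needs repair. You write $(AA^{*})^{2\alpha}(BB^{*})^{2\alpha}=(AA^{*})^{2\alpha-1}A(A^{*}B)(BB^{*})^{2\alpha-1}B^{*}$ and then invoke ``$(A^{*}B)(BB^{*})^{2\alpha-1}=(A^{*}B)(B^{*}B)^{2\alpha-1}$''; the latter is not an instance of the intertwining identity and is false in general. The correct factorization comes from writing $(BB^{*})^{2\alpha}=B(B^{*}B)^{2\alpha-1}B^{*}$ (equivalently, $B^{*}(BB^{*})^{2\alpha-1}=(B^{*}B)^{2\alpha-1}B^{*}$), which gives directly
\begin{equation*}
(AA^{*})^{2\alpha}(BB^{*})^{2\alpha}=\bigl[(AA^{*})^{2\alpha-1}A\bigr](A^{*}B)(B^{*}B)^{2\alpha-1}B^{*},
\end{equation*}
after which cycling $B^{*}$ to the front and using $B^{*}(AA^{*})^{2\alpha-1}A=B^{*}A(A^{*}A)^{2\alpha-1}$ yields exactly the claimed cross term. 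With this one-line fix the proof is complete; the remaining bookkeeping (trace-class membership of the products, nonnegativity of the nonzero spectrum so that $\trace(\cdot)^{1/2}$ is meaningful in the sense of Eq.(\ref{equation:equivalent}), and the $\alpha=1/2$ specialization) is handled correctly.
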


Let {$\X$} now be a separable topological space and 
{$K$} be a continuous positive definite kernel on {$\X \times \X$}. Then the reproducing kernel Hilbert space (RKHS) {$\H_K$}
induced by {$K$} is separable (\cite{Steinwart:SVM2008}, Lemma 4.33).
Let {$\Phi: \X \mapto \H_K$} be the corresponding canonical feature map, so that 
	$K(x,y) = \la \Phi(x), \Phi(y)\ra_{\H_K}$ $\forall (x,y) \in \X \times \X$.
	Assume furthermore that $\X$ is a complete separable metric space. Let $\rho$ be a Borel probability measure on $\X$ such that
		\begin{align}
		\int_{\X}||\Phi(x)||_{\H_K}^2d\rho(x) = \int_{\X}K(x,x)d\rho(x) < \infty.
		\end{align}
	Then the RKHS mean vector $\mu_{\Phi} \in \H_K$ and covariance operator {$C_{\Phi}:\H_K \mapto \H_K$} are both well-defined and are given by
		\begin{align}
		\mu_{\Phi} &= \int_{\X}\Phi(x)d\rho(x) \in \H_K, 
		\\\;\;\;
		C_{\Phi} &= \int_{\X}(\Phi(x)-\mu_{\Phi})\otimes (\Phi(x)-\mu_{\Phi})d\rho(x),
		\end{align}
	where, for $u,v,w \in \H_K, (u\otimes v)w = \la v,w\ra_{\H_K}u$. 
	In particular, the covarince operator {$C_{\Phi}$} is a positive trace class operator on $\H_K$ (see e.g. \cite{Minh:Covariance2017}).
	
Let {$\Xbf =[x_1, \ldots, x_m]$,$m \in \N$,} be a data matrix randomly sampled from {$\X$} according to a Borel probability distribution $\rho$, where {$m \in \N$} is the number of observations.
The feature map {$\Phi$} on {$\Xbf$} 
defines
the bounded linear operator
\begin{align}
	\Phi(\Xbf): \R^m \mapto \H_K, \;\;\; \Phi(\Xbf)\b = \sum_{j=1}^mb_j\Phi(x_j) , \b \in \R^m.
\end{align}
Informally, the operator {$\Phi(\Xbf)$} can also be viewed as the (potentially infinite) mapped feature matrix 
{$\Phi(\Xbf) = [\Phi(x_1), \ldots, \Phi(x_m)]$} of size {$\dim(\H_K) \times m$} in the feature space $\H_K$, with the $j$th column being
{$\Phi(x_j)$}. 
The corresponding empirical mean vector and empirical covariance operator for {$\Phi(\Xbf)$}
  are defined to be
\begin{align}
\label{equation:mean-vector}
\mu_{\Phi(\Xbf}) &= \frac{1}{m}\Phi(\Xbf)\1_m,
\\
C_{\Phi(\Xbf)} &= \frac{1}{m}\Phi(\Xbf)J_m\Phi(\Xbf)^{*}: \H_K \mapto \H_K,
\label{equation:covariance-operator}
\end{align}
where {$J_m = I_m -\frac{1}{m}\1_m\1_m^T$} is the centering matrix,
with
$\1_m = (1, \ldots, 1)^T \in \R^m$.

Let {$\Xbf = [x_i]_{i=1}^m$, $\Ybf = [y_i]_{i=1}^m$}, be two random data matrices sampled from {\small$\X$} according to two Borel probability distributions and $C_{\Phi(\Xbf)}$, $C_{\Phi(\Ybf)}$
be the corresponding covariance operators induced by
$K$. 
Define the $m \times m$ Gram matrices
\begin{align}
K[\Xbf] = \Phi(\Xbf)^{*}\Phi(\Xbf),\;K[\Ybf] = \Phi(\Ybf)^{*}\Phi(\Ybf), K[\Xbf,\Ybf] = \Phi(\Xbf)^{*}\Phi(\Ybf),
\\
(K[\Xbf])_{ij} = K(x_i,xj), \; (K[\Ybf])_{ij} = K(y_i,y_j), \; (K[\Xbf,\Ybf])_{ij} = K(x_i,y_j),
\\
i,j=1, \ldots, m.
\nonumber
\end{align}
Define
$A = \frac{1}{\sqrt{m}}\Phi(\Xbf)J_m:\R^m \mapto \H_K$, $B = \frac{1}{\sqrt{m}}\Phi(\Ybf)J_m:\R^m \mapto \H_K$, so that
\begin{align}
\label{equation:Gram-AB}
	A^{*}A = \frac{1}{m}J_mK[\Xbf]J_m, \; B^{*}B = \frac{1}{m}J_mK[\Ybf]J_m, \;A^{*}B = \frac{1}{m}J_m K[\Xbf,\Ybf]J_m.
\end{align}

\begin{theorem}
	[\textbf{Alpha Procrustes distance between RKHS covariance operators}]
	\label{theorem:proHS-RKHS}	
	Let $A^{*}A$, $A^{*}B$, $B^{*}B$ be as defined in Eq.(\ref{equation:Gram-AB})
	and $h_{\alpha}$ be as defined  in Eq.(\ref{equation:h-alpha-finite}), with $\H_1= \R^m$. Then
\begin{align}
\label{equation:proHS-RKHS}
&\alpha^2 (d^{\alpha}_{\proHS}[(C_{\Phi(\Xbf)} + \gamma I_{\H_K}), (C_{\Phi(\Ybf)} + \gamma I_{\H_K})])^2 
\\
& = \trace[(A^{*}A + \gamma I_{m})^{2\alpha} - \gamma^{2\alpha}I_{m}] + \trace[(B^{*}B + \gamma I_{m})^{2\alpha} - \gamma^{2\alpha}I_{m}]
\nonumber
\\
&- 2\gamma^{2\alpha}\trace\left[\left[I_{3m} 
+
\begin{pmatrix}
C_{11} & C_{12} & C_{13}
\\
C_{21} & C_{22} & C_{23}
\\
C_{21} & C_{22} & C_{23}
\end{pmatrix}
\right]^{1/2} - I_{3m}\right],
\end{align}
where the $m \times m$ matrices $C_{ij}$, $i=1,2$, $j=1,2,3$, are given by
{\small
\begin{align}
&C_{11} = [(I_{m} + A^{*}A/\gamma)^{2\alpha} - I_{m}], \; \; C_{12} = \frac{1}{\gamma}A^{*} Bh_{2\alpha}(B^{*}B/\gamma),
\\
&C_{13} =\frac{1}{\gamma}[(I_{m} + A^{*}A/\gamma)^{2\alpha} - I_{m}]A^{*}Bh_{2\alpha}(B^{*}B/\gamma),
\\
&C_{21} =\frac{1}{\gamma}B^{*}Ah_{2\alpha}(A^{*}A/\gamma), \; \; C_{22} =[(I_{m} + B^{*}B/\gamma)^{2\alpha} - I_{m}],
\\
&C_{23} = \frac{1}{\gamma^2}B^{*}Ah_{2\alpha}(A^{*}A/\gamma)A^{*}Bh_{2\alpha}(B^{*}B/\gamma).
\end{align}
}
\end{theorem}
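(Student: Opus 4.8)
The plan is to obtain Theorem~\ref{theorem:proHS-RKHS} as a direct specialization of Corollary~\ref{corollary:dproHS-H1H2-gamma} to the case $\H_1 = \R^m$, $\H_2 = \H_K$, followed by the substitution of the Gram-matrix identities in Eq.(\ref{equation:Gram-AB}).

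First I would rewrite the two covariance operators in the form $AA^{*}$ and $BB^{*}$. The centering matrix $J_m = I_m - \frac{1}{m}\1_m\1_m^T$ is a symmetric idempotent, i.e.\ $J_m^T = J_m$ and $J_m^2 = J_m$, so from Eq.(\ref{equation:covariance-operator}),
\begin{align*}
C_{\Phi(\Xbf)} &= \frac{1}{m}\Phi(\Xbf)J_m\Phi(\Xbf)^{*} = \frac{1}{m}\Phi(\Xbf)J_m^2\Phi(\Xbf)^{*} \\
&= \left(\frac{1}{\sqrt{m}}\Phi(\Xbf)J_m\right)\left(\frac{1}{\sqrt{m}}\Phi(\Xbf)J_m\right)^{*} = AA^{*},
\end{align*}
and likewise $C_{\Phi(\Ybf)} = BB^{*}$, with $A = \frac{1}{\sqrt{m}}\Phi(\Xbf)J_m$ and $B = \frac{1}{\sqrt{m}}\Phi(\Ybf)J_m$ the operators $\R^m \mapto \H_K$ of Eq.(\ref{equation:Gram-AB}).

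Next I would check that the hypotheses of Corollary~\ref{corollary:dproHS-H1H2-gamma} hold. Since $A$ and $B$ are bounded linear maps out of the finite-dimensional space $\R^m$, they are of finite rank, hence compact; consequently $A^{*}A$, $B^{*}B$, $A^{*}B:\R^m \mapto \R^m$ are finite-rank operators and in particular trace class. Corollary~\ref{corollary:dproHS-H1H2-gamma} then applies verbatim and expresses $\alpha^2\bigl(d^{\alpha}_{\proHS}[(C_{\Phi(\Xbf)}+\gamma I_{\H_K}),(C_{\Phi(\Ybf)}+\gamma I_{\H_K})]\bigr)^2$ through $A^{*}A$, $A^{*}B$, $B^{*}B$ and the operator $h_{2\alpha}$ of Eq.(\ref{equation:h-alpha}); because $A^{*}A/\gamma$ and $B^{*}B/\gamma$ are self-adjoint positive semidefinite $m\times m$ matrices, $h_{2\alpha}$ is here given by the finite-dimensional formula Eq.(\ref{equation:h-alpha-finite}) with $\H_1 = \R^m$, evaluated at the reduced eigendecompositions of these matrices. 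Finally I would substitute the Gram-matrix identities $A^{*}A = \frac{1}{m}J_mK[\Xbf]J_m$, $B^{*}B = \frac{1}{m}J_mK[\Ybf]J_m$, $A^{*}B = \frac{1}{m}J_mK[\Xbf,\Ybf]J_m$ from Eq.(\ref{equation:Gram-AB}) into the resulting expression, so that the $3m\times 3m$ block operator becomes an honest matrix, obtaining exactly the claimed formula.

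I do not expect a genuine obstacle here: once Corollary~\ref{corollary:dproHS-H1H2-gamma} is available, the theorem is a matter of bookkeeping. The only points worth a sentence of care are (i) the idempotency $J_m^2 = J_m$, which is precisely what allows $C_{\Phi(\Xbf)}$ to be written as $AA^{*}$ with exactly the operator $A$ of Eq.(\ref{equation:Gram-AB}), not merely up to a square root, and (ii) the observation that finiteness of $m$ makes every compactness and trace-class requirement automatic, so the closed-form expression depends only on the $m\times m$ Gram matrices $K[\Xbf]$, $K[\Ybf]$, $K[\Xbf,\Ybf]$.
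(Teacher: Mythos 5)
Your proposal is correct and follows exactly the route the paper intends: Theorem~\ref{theorem:proHS-RKHS} is a verbatim specialization of Corollary~\ref{corollary:dproHS-H1H2-gamma} with $\H_1=\R^m$, $\H_2=\H_K$, using the idempotency $J_m^2=J_m$ to write $C_{\Phi(\Xbf)}=AA^{*}$, $C_{\Phi(\Ybf)}=BB^{*}$, and then substituting the Gram-matrix identities of Eq.(\ref{equation:Gram-AB}). The paper does not even record a separate proof for this theorem, precisely because the argument is the bookkeeping you describe, and your two points of care (the role of $J_m^2=J_m$ and the automatic trace-class hypotheses in finite dimension) are the right ones.
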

\subsection{Parametrized family of distances between Gaussian measures in RKHS}
\label{section:distance-Gaussian-RKHS}

We now combine the results in Section \ref{section:distance-Gaussian} on the distances between Gaussian measures
on a separable Hilbert space with the results in Section \ref{section:RKHS} to obtain
a parametrized family of distances between Gaussian measures in an RKHS induced by a positive definite kernel $K$,
with closed form formulas via the corresponding kernel Gram matrices

Throughout the following, let $K$, $\H_K$, $\Xbf$, $\Ybf$, $\mu_{\Phi(\Xbf)},\mu_{\Phi(\Ybf)}$, and $C_{\Phi(\Xbf)}, C_{\Phi(\Ybf)}$ be as defined
in Section \ref{section:RKHS}. 

\begin{theorem}
	[\textbf{Alpha Procrustes distances between Gaussian measures on RKHS}]
	\label{theorem:distance-Gaussian-RKHS-alpha}
	Let $\alpha \in \R$ and $d_{\mean} = ||\;||_{\H_K}$. Let $\gamma > 0$ be fixed.
	The following is a parametrized family of squared distances, with parameter $\alpha$, between
	the Gaussian measures $\Ncal(\mu_{\Phi(\Xbf)}, C_{\Phi(\Xbf)})$ and $\Ncal(\mu_{\Phi(\Ybf)}, C_{\Phi(\Ybf)})$,
	\begin{align}
	&\left(D^{\alpha}_{\proHS}[\Ncal(\mu_{\Phi(\Xbf)}, C_{\Phi(\Xbf)}) || \Ncal(\mu_{\Phi(\Ybf)}, C_{\Phi(\Ybf)})]\right)^2
	\nonumber
	\\
	& = ||\mu_{\Phi(\Xbf)} - \mu_{\Phi(\Ybf)}||^2_{\H_K}+ \frac{1}{4} (d^{\alpha}_{\proHS}[(C_{\Phi(\Xbf)} + \gamma I_{\H_K}), (C_{\Phi(\Ybf)} + \gamma I_{\H_K})])^2
	\\
	& = \frac{1}{m^2}\1_m^T(K[\Xbf] + K[\Ybf] - 2K[\Xbf, \Ybf])\1_m
	\\
	&+\frac{1}{4} (d^{\alpha}_{\proHS}[(C_{\Phi(\Xbf)} + \gamma I_{\H_K}), (C_{\Phi(\Ybf)} + \gamma I_{\H_K})])^2,
	\nonumber
	\end{align}
	where $d^{\alpha}_{\proHS}[(C_{\Phi(\Xbf)} + \gamma I_{\H_K}), (C_{\Phi(\Ybf)} + \gamma I_{\H_K})]$ 
	is as given in Eq.(\ref{equation:proHS-RKHS}).
\end{theorem}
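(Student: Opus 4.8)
The plan is to assemble the claim from three ingredients already established in the excerpt, so the proof is essentially a matter of bookkeeping rather than new analysis. First I would invoke Lemma \ref{lemma:metric-quadratic-Gaussian} with $d_{\mean} = \|\;\|_{\H_K}$ and $d_{\cov} = \tfrac12 d^{\alpha}_{\proHS}$ (the factor $\tfrac12$ coming from the $\tfrac{1}{4\alpha^2}$ normalization in Theorem \ref{theorem:metric-Gaussian-Hilbert}), which immediately gives that the square root of the stated quantity is a metric on Gaussian measures on $\H_K$; this reduces the theorem to verifying the two displayed closed-form identities. The first identity is just the definition of $D^{\alpha}_{\proHS}$ specialized to $\H = \H_K$, $m_i = \mu_{\Phi(\Xbf)}, \mu_{\Phi(\Ybf)}$, $C_i = C_{\Phi(\Xbf)}, C_{\Phi(\Ybf)}$, exactly as in Theorem \ref{theorem:metric-Gaussian-Hilbert}, with the covariance term being $\tfrac14 (d^{\alpha}_{\proHS}[(C_{\Phi(\Xbf)}+\gamma I_{\H_K}),(C_{\Phi(\Ybf)}+\gamma I_{\H_K})])^2$.

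The second identity requires computing $\|\mu_{\Phi(\Xbf)} - \mu_{\Phi(\Ybf)}\|^2_{\H_K}$ in terms of Gram matrices. Using $\mu_{\Phi(\Xbf)} = \tfrac1m \Phi(\Xbf)\1_m$ and $\mu_{\Phi(\Ybf)} = \tfrac1m \Phi(\Ybf)\1_m$ from Eq.(\ref{equation:mean-vector}), I would expand
\begin{align}
\|\mu_{\Phi(\Xbf)} - \mu_{\Phi(\Ybf)}\|^2_{\H_K}
&= \tfrac{1}{m^2}\la \Phi(\Xbf)\1_m - \Phi(\Ybf)\1_m, \Phi(\Xbf)\1_m - \Phi(\Ybf)\1_m\ra_{\H_K}
\nonumber
\\
&= \tfrac{1}{m^2}\1_m^T\big(\Phi(\Xbf)^{*}\Phi(\Xbf) + \Phi(\Ybf)^{*}\Phi(\Ybf) - 2\Phi(\Xbf)^{*}\Phi(\Ybf)\big)\1_m,
\nonumber
\end{align}
and then substitute $\Phi(\Xbf)^{*}\Phi(\Xbf) = K[\Xbf]$, $\Phi(\Ybf)^{*}\Phi(\Ybf) = K[\Ybf]$, $\Phi(\Xbf)^{*}\Phi(\Ybf) = K[\Xbf,\Ybf]$ from the definition of the Gram matrices. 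This gives the $\tfrac{1}{m^2}\1_m^T(K[\Xbf] + K[\Ybf] - 2K[\Xbf,\Ybf])\1_m$ term. The covariance term is unchanged and is referenced as Eq.(\ref{equation:proHS-RKHS}) via Theorem \ref{theorem:proHS-RKHS}, which already expresses $d^{\alpha}_{\proHS}[(C_{\Phi(\Xbf)} + \gamma I_{\H_K}), (C_{\Phi(\Ybf)} + \gamma I_{\H_K})]$ through the $m\times m$ Gram matrices $A^{*}A, B^{*}B, A^{*}B$ of Eq.(\ref{equation:Gram-AB}).

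There is essentially no hard analytic obstacle here; the content of the theorem is the combination of Lemma \ref{lemma:metric-quadratic-Gaussian} (giving the metric property), Theorem \ref{theorem:metric-Gaussian-Hilbert} (giving the abstract Gaussian-measure formula), and Theorem \ref{theorem:proHS-RKHS} together with the feature-map identities (giving the Gram-matrix expression). The only point requiring a little care is the bookkeeping of the factor $\tfrac14$: one must confirm that $\tfrac12 d^{\alpha}_{\proHS}$ is indeed a metric on $\Sym^{+}(\H_K)\cap \Tr(\H_K)$ so that Lemma \ref{lemma:metric-quadratic-Gaussian} applies, which follows because a positive scalar multiple of a metric is a metric and $d^{\alpha}_{\proHS}$ restricted to the relevant set of covariance-plus-$\gamma I$ operators is a metric by Theorem \ref{theorem:metric-family-infinite-HS} (noting $C_{\Phi(\Xbf)}, C_{\Phi(\Ybf)}$ are positive trace class, hence Hilbert-Schmidt). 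I would finish by remarking that since $C_{\Phi(\Xbf)}, C_{\Phi(\Ybf)}$ are covariance operators, the operator $C_{\Phi(\Xbf)} = AA^{*}$ with $A = \tfrac{1}{\sqrt m}\Phi(\Xbf)J_m$, so that Corollary \ref{corollary:dproHS-H1H2-gamma} (with $\H_1 = \R^m$, $\H_2 = \H_K$) and Theorem \ref{theorem:proHS-RKHS} apply verbatim, completing the proof.
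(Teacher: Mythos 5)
Your proposal is correct and follows essentially the same route as the paper: the paper's proof likewise expands $\|\mu_{\Phi(\Xbf)} - \mu_{\Phi(\Ybf)}\|^2_{\H_K}$ via the kernel to obtain the Gram-matrix term and then cites Theorem \ref{theorem:metric-Gaussian-Hilbert} (which already rests on Lemma \ref{lemma:metric-quadratic-Gaussian}) together with Theorem \ref{theorem:proHS-RKHS}. Your explicit bookkeeping of the factor $\tfrac14$ via $d_{\cov} = \tfrac12 d^{\alpha}_{\proHS}$ is a correct and slightly more careful articulation of what the paper leaves implicit.
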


\begin{theorem}
	[\textbf{Alpha Procrustes distances between Gaussian measures on RKHS - $\alpha \geq 1/2$}]
	\label{theorem:distance-Gaussian-RKHS-alpha-more-than1/2}
Let $\alpha \geq 1/2$ and $d_{\mean} = ||\;||_{\H_K}$.
The following is a parametrized family of distances, with parameter $\alpha$, between
the Gaussian measures $\Ncal(\mu_{\Phi(\Xbf)}, C_{\Phi(\Xbf)})$ and $\Ncal(\mu_{\Phi(\Ybf)}, C_{\Phi(\Ybf)})$.
\begin{align}
&\left(D^{\alpha}_{\proHS}[\Ncal(\mu_{\Phi(\Xbf)}, C_{\Phi(\Xbf)})|| \Ncal(\mu_{\Phi(\Ybf)}, C_{\Phi(\Ybf)})]\right)^2
\nonumber
\\
&=||\mu_{\Phi(\Xbf)} - \mu_{\Phi(\Ybf)}||^2_{\H_K} + \frac{1}{4\alpha^2}\trace[C_{\Phi(\Xbf)}^{2\alpha} + C_{\Phi(\Ybf)}^{2\alpha} - 2 (C_{\Phi(\Xbf)}^{\alpha}C_{\Phi(\Ybf)}^{2\alpha}C_{\Phi(\Xbf)}^{\alpha})^{1/2}]
\\
&=
\frac{1}{m^2}\1_m^T[K[\Xbf] + K[\Ybf] - 2K[\Xbf,\Ybf]]\1_m
\\
&\quad+ \frac{1}{4\alpha^2m^{2\alpha}}\trace(J_mK[\Xbf]J_m)^{2\alpha} +  \frac{1}{4\alpha^2m^{2\alpha}}\trace(J_mK[\Ybf]J_m)^{2\alpha}
\nonumber
\\
&\quad- \frac{2}{4\alpha^2m^{2\alpha}}\trace[J_mK[\Ybf,\Xbf]J_m(J_mK[\Xbf]J_m)^{2\alpha-1}J_mK[\Xbf,\Ybf]J_m(J_mK[\Ybf]J_m)^{2\alpha-1}]^{1/2}.
\nonumber
\end{align}
\end{theorem}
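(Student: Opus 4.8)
The plan is to obtain the statement by specializing two earlier results to the RKHS setting and then rewriting everything in terms of the kernel Gram matrices. First I would invoke Theorem~\ref{theorem:metric-Gaussian-Hilbert} with $\H=\H_K$, $m_1=\mu_{\Phi(\Xbf)}$, $m_2=\mu_{\Phi(\Ybf)}$, $C_1=C_{\Phi(\Xbf)}$, $C_2=C_{\Phi(\Ybf)}$ and $d_{\mean}=\|\cdot\|_{\H_K}$. Since $\H_K$ is separable and the RKHS covariance operators are positive trace class on $\H_K$, the $\alpha\geq 1/2$ part of that theorem yields at once both the metric property and the first displayed equality, i.e. that the squared distance equals $\|\mu_{\Phi(\Xbf)}-\mu_{\Phi(\Ybf)}\|^2_{\H_K}+\frac{1}{4\alpha^2}\trace[C_{\Phi(\Xbf)}^{2\alpha}+C_{\Phi(\Ybf)}^{2\alpha}-2(C_{\Phi(\Xbf)}^{\alpha}C_{\Phi(\Ybf)}^{2\alpha}C_{\Phi(\Xbf)}^{\alpha})^{1/2}]$. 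It then remains to express each summand through $K[\Xbf]$, $K[\Ybf]$, $K[\Xbf,\Ybf]$.

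For the mean term I would simply expand the norm: using $\mu_{\Phi(\Xbf)}=\frac1m\Phi(\Xbf)\1_m$, $\mu_{\Phi(\Ybf)}=\frac1m\Phi(\Ybf)\1_m$ and the adjoint relations $\Phi(\Xbf)^{*}\Phi(\Xbf)=K[\Xbf]$, $\Phi(\Ybf)^{*}\Phi(\Ybf)=K[\Ybf]$, $\Phi(\Xbf)^{*}\Phi(\Ybf)=K[\Xbf,\Ybf]$, one obtains $\|\mu_{\Phi(\Xbf)}-\mu_{\Phi(\Ybf)}\|^2_{\H_K}=\frac{1}{m^2}\1_m^T[K[\Xbf]+K[\Ybf]-2K[\Xbf,\Ybf]]\1_m$.

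For the covariance term I would set $A=\frac{1}{\sqrt m}\Phi(\Xbf)J_m$ and $B=\frac{1}{\sqrt m}\Phi(\Ybf)J_m$ as in Eq.~(\ref{equation:Gram-AB}). Because $J_m$ is idempotent, $AA^{*}=\frac1m\Phi(\Xbf)J_m\Phi(\Xbf)^{*}=C_{\Phi(\Xbf)}$ and $BB^{*}=C_{\Phi(\Ybf)}$; moreover $A,B$ are finite rank, hence compact, and $A^{*}A, B^{*}B, A^{*}B$ are operators on the finite-dimensional space $\R^m$, hence trace class. So Corollary~\ref{corollary:dproHS-H1H2-gamma-0} applies with $\H_1=\R^m$, $\H_2=\H_K$, $\alpha\geq 1/2$, and the covariance term equals $\frac14(d^{\alpha}_{\proHS}[(AA^{*}),(BB^{*})])^2=\frac{1}{4\alpha^2}\trace[(A^{*}A)^{2\alpha}+(B^{*}B)^{2\alpha}-2(B^{*}A(A^{*}A)^{2\alpha-1}A^{*}B(B^{*}B)^{2\alpha-1})^{1/2}]$. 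Then I would substitute $A^{*}A=\frac1m J_mK[\Xbf]J_m$, $B^{*}B=\frac1m J_mK[\Ybf]J_m$, $A^{*}B=\frac1m J_mK[\Xbf,\Ybf]J_m$, $B^{*}A=(A^{*}B)^{*}=\frac1m J_mK[\Ybf,\Xbf]J_m$ (using symmetry of $K$), and pull the scalars out via $(cM)^{\beta}=c^{\beta}M^{\beta}$ for $c>0$: each of the two diagonal traces acquires the factor $m^{-2\alpha}$, while the product under the cross-term square root carries the scalar $m^{-4\alpha}$, hence $m^{-2\alpha}$ after the square root. Collecting all terms yields exactly the asserted formula.

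I do not anticipate a genuine obstacle here, since the substantive work is already contained in Theorem~\ref{theorem:metric-Gaussian-Hilbert} and Corollary~\ref{corollary:dproHS-H1H2-gamma-0}; the remaining step is essentially bookkeeping. The one point requiring care is the tracking of the scalar powers of $m$ when passing from the functional calculus of operators on $\H_K$ to that of $m\times m$ matrices on $\R^m$, which rests precisely on the two elementary facts $J_m^2=J_m$ (so that $C_{\Phi(\Xbf)}=AA^{*}$ with no spurious constant) and $K[\Xbf,\Ybf]^{T}=K[\Ybf,\Xbf]$.
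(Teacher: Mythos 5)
Your proposal is correct and follows essentially the same route as the paper: the paper's proof likewise expands the mean term via the Gram matrices, invokes Theorem~\ref{theorem:metric-Gaussian-Hilbert} (the $\alpha\geq 1/2$ case) for the decomposition, and obtains the covariance term from the RKHS operator identities of Section~\ref{section:RKHS} via Corollary~\ref{corollary:dproHS-H1H2-gamma-0} with $A=\frac{1}{\sqrt m}\Phi(\Xbf)J_m$, $B=\frac{1}{\sqrt m}\Phi(\Ybf)J_m$. Your bookkeeping of the powers of $m$ and the use of $J_m^2=J_m$ are exactly the points the paper relies on implicitly.
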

In particular, for $\alpha = 1/2$, we obtain the $\L^2$-Wasserstein distance.
\begin{corollary}
	[\textbf{$\L^2$-Wasserstein distance between Gaussian measures in RKHS}]
	\label{corollary:distance-Gaussian-L2Wasserstein}
\begin{align}
\label{equation:distance-Gaussian-L2Wasserstein}
&(D^{1/2}_{\proHS}[\Ncal(\mu_{\Phi(\Xbf)}, C_{\Phi(\Xbf)}) || \Ncal(\mu_{\Phi(\Ybf)}, C_{\Phi(\Ybf)})])^2
\nonumber
\\
&=||\mu_{\Phi(\Xbf)} - \mu_{\Phi(\Ybf)}||^2_{\H_K} + \trace[C_{\Phi(\Xbf)} + C_{\Phi(\Ybf)} - 2 (C_{\Phi(\Xbf)}^{1/2}C_{\Phi(\Ybf)}C_{\Phi(\Xbf)}^{1/2})^{1/2}]
\\
&=
\frac{1}{m^2}\1^T_m[K[\Xbf] + K[\Ybf] - 2K[\Xbf,\Ybf]]\1_m
\\
&\quad+ \frac{1}{m}\trace(J_mK[\Xbf]J_m) +  \frac{1}{m}\trace(J_mK[\Ybf]J_m)
- \frac{2}{m}\trace[J_mK[\Ybf,\Xbf]J_mK[\Xbf,\Ybf]J_m]^{1/2}.
\nonumber
\nonumber
\end{align}
\end{corollary}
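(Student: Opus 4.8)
The plan is to derive Corollary~\ref{corollary:distance-Gaussian-L2Wasserstein} as the special case $\alpha = 1/2$ of Theorem~\ref{theorem:distance-Gaussian-RKHS-alpha-more-than1/2}, checking that the general $\alpha$-formula collapses to the stated $\L^2$-Wasserstein expression, and separately verifying that the operator-theoretic middle line matches the classical Bures-Wasserstein form. First I would substitute $\alpha = 1/2$ into the second displayed equality of Theorem~\ref{theorem:distance-Gaussian-RKHS-alpha-more-than1/2}: the prefactor $\frac{1}{4\alpha^2} = 1$, the powers $2\alpha = 1$ so $C_{\Phi(\Xbf)}^{2\alpha} = C_{\Phi(\Xbf)}$ and likewise for $\Ybf$, and inside the square-root term $C_{\Phi(\Xbf)}^{\alpha}C_{\Phi(\Ybf)}^{2\alpha}C_{\Phi(\Xbf)}^{\alpha} = C_{\Phi(\Xbf)}^{1/2}C_{\Phi(\Ybf)}C_{\Phi(\Xbf)}^{1/2}$. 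This immediately gives the operator-theoretic expression $\|\mu_{\Phi(\Xbf)} - \mu_{\Phi(\Ybf)}\|^2_{\H_K} + \trace[C_{\Phi(\Xbf)} + C_{\Phi(\Ybf)} - 2(C_{\Phi(\Xbf)}^{1/2}C_{\Phi(\Ybf)}C_{\Phi(\Xbf)}^{1/2})^{1/2}]$, which by Eq.~(\ref{equation:L2-Wasserstein}) (applied on the separable Hilbert space $\H_K$, as noted after Eq.~(\ref{equation:Bures-optimize-finite})) is exactly the squared $\L^2$-Wasserstein distance between the two Gaussian measures on $\H_K$; this establishes the first claimed equality.

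Next I would specialize the Gram-matrix line. Setting $\alpha = 1/2$ in the final display of Theorem~\ref{theorem:distance-Gaussian-RKHS-alpha-more-than1/2}: the mean term $\frac{1}{m^2}\1_m^T[K[\Xbf] + K[\Ybf] - 2K[\Xbf,\Ybf]]\1_m$ is unchanged; in the covariance terms $\frac{1}{4\alpha^2 m^{2\alpha}} = \frac{1}{m}$ and $(J_mK[\Xbf]J_m)^{2\alpha} = J_mK[\Xbf]J_m$, giving $\frac{1}{m}\trace(J_mK[\Xbf]J_m)$ and $\frac{1}{m}\trace(J_mK[\Ybf]J_m)$; and in the cross term $2\alpha - 1 = 0$ so $(J_mK[\Xbf]J_m)^{2\alpha-1} = (J_mK[\Ybf]J_m)^{2\alpha-1} = I_m$ (here one uses that $J_m$ is idempotent, so these factors are the identity on the range and act trivially under the trace once flanked appropriately), collapsing the cross term to $-\frac{2}{m}\trace[J_mK[\Ybf,\Xbf]J_m K[\Xbf,\Ybf]J_m]^{1/2}$. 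This reproduces the third displayed line of the corollary.

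Finally, for completeness I would remark on the phrase ``as can be verified directly'': one may bypass Theorem~\ref{theorem:distance-Gaussian-RKHS-alpha-more-than1/2} and instead argue straight from Corollary~\ref{corollary:dproHS-H1H2-gamma-0} with $\alpha = 1/2$, $\H_1 = \R^m$, $\H_2 = \H_K$, $A = \frac{1}{\sqrt m}\Phi(\Xbf)J_m$, $B = \frac{1}{\sqrt m}\Phi(\Ybf)J_m$, so that $AA^{*} = C_{\Phi(\Xbf)}$, $BB^{*} = C_{\Phi(\Ybf)}$, and $A^{*}A$, $B^{*}B$, $A^{*}B$ are the $\frac{1}{m}J_mK[\cdot]J_m$ matrices of Eq.~(\ref{equation:Gram-AB}); the $\alpha = 1/2$ line of that corollary then gives $d^{1/2}_{\proHS}[(C_{\Phi(\Xbf)}),(C_{\Phi(\Ybf)})]^2 = 4[\trace(A^{*}A) + \trace(B^{*}B) - 2\trace(B^{*}AA^{*}B)^{1/2}]$, and translating each trace via Eq.~(\ref{equation:Gram-AB}) yields the Gram-matrix formula directly. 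I expect no genuine obstacle here: the result is a pure specialization, and the only point requiring a line of care is the handling of the factors $(J_mK[\cdot]J_m)^{2\alpha-1}$ at $\alpha = 1/2$, where one must note they become identity operators and use idempotency of $J_m$ (equivalently, invoke Corollary~\ref{corollary:dproHS-H1H2-gamma-0}, whose own derivation from Corollary~\ref{corollary:explicit-traceclass} already absorbed this subtlety) together with the identity $\trace(AB)^{1/2} = \trace(BA)^{1/2}$ in the sense of the Remark following Eq.~(\ref{equation:Bures-optimize-finite}) to pass between $\trace(C_{\Phi(\Xbf)}^{1/2}C_{\Phi(\Ybf)}C_{\Phi(\Xbf)}^{1/2})^{1/2}$ and $\trace(B^{*}AA^{*}B)^{1/2}$.
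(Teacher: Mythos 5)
Your proposal is correct and matches the paper's intended argument: the corollary is exactly the specialization $\alpha = 1/2$ of Theorem~\ref{theorem:distance-Gaussian-RKHS-alpha-more-than1/2}, with the identification of the resulting operator expression with Eq.~(\ref{equation:L2-Wasserstein}) and the collapse of the Gram-matrix cross term handled via the $\alpha=1/2$ case of Corollary~\ref{corollary:dproHS-H1H2-gamma-0}. Your extra care with the $(J_mK[\cdot]J_m)^{2\alpha-1}$ factors at $\alpha=1/2$ (zeroth powers being range projections rather than identities) is a genuine subtlety that the paper absorbs in the same way, by the direct computation in Corollary~\ref{corollary:dproHS-H1H2-gamma-0}.
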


\begin{remark} To keep our expressions simple, we have assumed that the number of data points in 
	the matrices $\Xbf$ and $\Ybf$ are the same. If $\Xbf = [x_i]_{i=1}^m$, and $\Ybf = [y_i]_{i=1}^n$, 
	then instead of Eq.(\ref{equation:distance-Gaussian-L2Wasserstein}), we have the following more general expression 
	\begin{align}
	\label{equation:distance-Gaussian-L2Wasserstein-mn}
	&(D^{1/2}_{\proHS}[\Ncal(\mu_{\Phi(\Xbf)}, C_{\Phi(\Xbf)}) || \Ncal(\mu_{\Phi(\Ybf)}, C_{\Phi(\Ybf)})])^2
	\nonumber
	\\
	&=||\mu_{\Phi(\Xbf)} - \mu_{\Phi(\Ybf)}||^2_{\H_K} + \trace[C_{\Phi(\Xbf)} + C_{\Phi(\Ybf)} - 2 (C_{\Phi(\Xbf)}^{1/2}C_{\Phi(\Ybf)}C_{\Phi(\Xbf)}^{1/2})^{1/2}]
	\\
	&=
	\frac{1}{m^2}\1^T_mK[\Xbf]\1_m + \frac{1}{n^2}\1_n^TK[\Ybf]\1_n - \frac{2}{mn}\1^T_mK[\Xbf,\Ybf]\1_n
	\\
	&\quad+ \frac{1}{m}\trace(J_mK[\Xbf]J_m) +  \frac{1}{n}\trace(J_nK[\Ybf]J_n)
	- \frac{2}{\sqrt{mn}}\trace[J_nK[\Ybf,\Xbf]J_mK[\Xbf,\Ybf]J_n]^{1/2}.
	\nonumber
	\nonumber
	\end{align}
	
\end{remark}

\begin{remark}
	Numerical experiments utilizing the above RKHS distances will be presented in a separate work.
\end{remark}

%
%


\bibliographystyle{splncs04}
\bibliography{cite_RKHS}

\appendix

\section{Proofs for the finite-dimensional case}
\label{section:proofs-finite}

\begin{proof}
	[\textbf{of Theorem \ref{theorem:limit-alpha-0-finite}}]
	We rewrite the expression for $d^{\alpha}_{\proE}(A,B)$ as 
	\begin{align*}
	(d^{\alpha}_{\proE}(A,B))^2 
	&= \frac{||A^{\alpha}-I||^2_F}{\alpha^2} + \frac{||B^{\alpha} - I||^2_F}{\alpha^2}
	\\
	& - \frac{2}{\alpha^2}\trace[(A^{\alpha}B^{2\alpha}A^{\alpha})^{1/2} - A^{\alpha} - B^{\alpha} + I].
	\end{align*}
	We have for the first two terms
	\begin{align*}
	\lim_{\alpha \approach 0}\frac{||A^{\alpha} - I||_F^2}{\alpha^2} = ||\log(A)||^2_F, \;\;\; 
	\lim_{\alpha \approach 0}\frac{||B^{\alpha} - I||_F^2}{\alpha^2} = ||\log(B)||^2_F.
	\end{align*}
	For the last term,
	\begin{align*}
	A^{\alpha} &= \exp(\alpha\log(A)) = I + \alpha \log(A) + \frac{\alpha^2}{2!}(\log(A))^2 + \cdots,
	\\
	B^{2\alpha } &= \exp(2\alpha \log(B)) = I + 2\alpha \log(B) + \frac{(2\alpha)^2}{2!}(\log(B))^2 + \cdots,
	\\
	A^{\alpha}B^{2\alpha}  &= I + \alpha [\log(A) + 2\log(B)] \\
	&+ \alpha^2\left(\frac{1}{2!}(\log(A))^2 + 2\log(A)\log(B) + \frac{2^2}{2!}(\log(B))^2\right) + \cdots
	\\
	A^{\alpha}B^{2\alpha}A^{\alpha} &= I + 2\alpha(\log(A) + \log(B)) + 2\alpha^2(\log(A) + \log(B))^2 + \cdots.
	\end{align*}	
	For $||A|| < 1$, the following series is absolutely convergent
	\begin{align*}
	\sqrt{I + A} = I + \frac{1}{2}A - \frac{1}{8}A^2 + \cdots 
	\end{align*}
	Thus it follows that for $\alpha$ sufficiently small
	\begin{align*}
	&(A^{\alpha}B^{2\alpha}A^{\alpha})^{1/2} = I + \alpha(\log(A) + \log(B)) + \frac{\alpha^2}{2}(\log(A) + \log(B))^2 + \cdots,
	\end{align*}
	so that
	\begin{align*}
	&(A^{\alpha}B^{2\alpha}A^{\alpha})^{1/2} - A^{\alpha} - B^{\alpha} + I = \frac{\alpha^2}{2}[\log(A)\log(B) + \log(B)\log(A)] + \cdots.
	\end{align*}
	It follows that
	\begin{align*}
	&\lim_{\alpha \approach 0}\frac{1}{\alpha^2}\trace[(A^{\alpha}B^{2\alpha}A^{\alpha})^{1/2} - A^{\alpha} - B^{\alpha} + I]
	= \trace[\log(A)\log(B) + \log(B)\log(A)] 
	\\
	&= 2\trace[\log(A)\log(B)] = 2\la \log(A), \log(B)\ra_F.
	\end{align*}
	Combining with the limits of the first two terms, we obtain
	\begin{align*}
	\lim_{\alpha \approach 0}(d^{\alpha}_{\proE}(A,B))^2 &= ||\log(A)||^2_F +||\log(B)||^2_F - 2\la \log(A), \log(B)\ra_F
	\\
	 &= ||\log(A) - \log(B)||^2_F.
	\end{align*}
	This completes the proof. \qed
\end{proof}

\begin{proof}
	[\textbf{of Theorem \ref{theorem:metric-family-finite}}]
	For $\alpha =0$, the Log-Euclidean distance is a metric on $\Sym^{++}(n)$.
	For $\alpha \neq 0$, the metric properties of $d^{\alpha}_{\proE}$ follow from the metric properties of the Bures-Wasserstein distance.
	\qed
\end{proof}

\begin{proof}
	[\textbf{of Theorem \ref{theorem:power-euclidean-procrustes-compare}}]
	From the inequality in Eq.(\ref{equation:Araki-Lieb-Thirring}), we have for any parir
	$A,B \in \Sym^{+}(n)$, $\alpha \neq 0$, (with $A,B \in \Sym^{++}(n)$ if $\alpha < 0$),
	\begin{align}
	\trace(A^{\alpha}B^{2\alpha}A^{\alpha})^{1/2} \geq \trace(A^{\alpha}B^{\alpha}),
	\end{align}
	with equality if and only if $AB = BA$. It follows that
	\begin{align*}
	&d^2_{E, \alpha}(A,B) = \left\|\frac{A^{\alpha} - B^{\alpha}}{\alpha}\right\|^2_F 
	= \frac{1}{\alpha^2}\trace[A^{2\alpha} + B^{2\alpha} - 2A^{\alpha}B^{\alpha}]
	\\
	& \geq \frac{1}{\alpha^2}\trace[A^{2\alpha} + B^{2\alpha} - 2(A^{\alpha}B^{2\alpha}A^{\alpha})^{1/2}] = d^{\alpha}_{\proE}(A,B).
	\end{align*}
	This completes the proof.\qed
\end{proof}

\subsection{Proofs for the Riemannian metric}

\begin{proof}
	[\textbf{of Proposition \ref{proposition:differential}}]
	For $A_0 \in \GL(n)$, the differential map $Dg(A_0): \Mrm(n) \mapto \Sym(n)$ is given by
	the action 
	\begin{align}
	Dg(A_0)(X) = \alpha^2(XA_0^{*} + A_0X^{*}), \;\;\; X \in \Mrm(n).
	\end{align}
	By the chain rule, we have
	\begin{align*}
	D\pi_{\alpha}(A_0)(X) = \frac{\alpha}{2}D\exp\left(\frac{1}{2\alpha}\log(\alpha^2 A_0A_0^{*})\right) \compose D\log(\alpha^2 A_0A_0^{*}) \compose Dg(A_0)(X).
	\end{align*}
	For any $A_0 \in \Sym^{++}(n)$, $D\log(A_0):\Sym(n) \mapto \Sym(n)$ and $D\exp(\log(A_0)): \Sym(n) \mapto \Sym(n)$ are invertible operators, thus
	they both have zero null spaces. It follows that
	\begin{align*}
	&\ker(D\pi_{\alpha}(A_0)) = \ker (Dg(A_0)) = \{X \in \Mrm(n): XA_0^{*} + A_0X^{*} = 0\}
	\\
	& = \{X \in \Mrm(n): XA_0^{*} \;\text{is skew-symmetric}\} = (\Sym(n))^{\perp}(A_0^{*})^{-1}.
	\end{align*}
	Its orthogonal complement is thus the space
	\begin{align*}
	&\ker(D\pi_{\alpha}(A_0))^{\perp} = \Sym(n)A_0.
	\end{align*}
	It follows from this expression that $D\pi_{\alpha}(A_0):\Mrm(n) \mapto \Sym(n)$ must be surjective, hence
	$\pi_{\alpha}$ is a smooth submersion. \qed
\end{proof}

\begin{proof}
	[\textbf{of Proposition \ref{proposition:general-lyapunov}}]
	For any $S \in \Sym(n)$, there is a unique matrix $H \in \Sym(n)$ satisfying the Lyapunov equation
	\begin{align*}
	HP_0^{2\alpha} + P_0^{2\alpha}H = S.
	\end{align*}
	Since $D\exp(\log(P_0)): \Sym(n) \mapto \Sym(n)$ and $D\log(P_0^{2\alpha}):\Sym(n) \mapto \Sym(n)$ are both invertible operators, it follows that
	for any $Y \in \Sym(n)$, there is a unique matrix $H \in \Sym(n)$ such that
	\begin{align*}
	D\exp(\log(P_0)) \compose D\log(P_0^{2\alpha})(HP_0^{2\alpha} + P_0^{2\alpha}H) = Y.
	\end{align*}   
	This completes the proof.
	\qed
\end{proof}

\begin{proof}
	[\textbf{of Theorem \ref{theorem:general-metric}}]
	We recall that the mapping $\pi_{\alpha}:\GL(n) \mapto \Sym^{++}(n)$ is a Riemannian submersion if it is a smooth submersion and
	that the differential map $D\pi_{\alpha}(A_0): \H_{A_0} \mapto T_{\pi_{\alpha}(A_0)}\Sym^{++}(n)$ is an isometry $\forall A_0 \in \GL(n)$.
	
	Let $P_0 = (\alpha^2 AA_0^{*})^{1/(2\alpha)} \equivalent \alpha^2 A_0A_0^{*} = P_0^{2\alpha}$, where $A_0 \in GL(n)$. 
	Let us define an inner product on the tangent space $T_{P_0}(\Sym^{++}(n)) = \Sym(n)$ so that the mapping $\pi_{\alpha}: \GL(n) \mapto \Sym^{++}(n)$ is a Riemannian submersion.
	
	On the horizontal space $\H_{A_0}$, the inner product between two elements $HA_0$ and $KA_0$, $H, K \in \Sym(n)$, is given by
	\begin{align*}
	&\la HA_0, KA_0\ra_F= \trace(A_0^{*}HKA_0) = \trace(KA_0A_0^{*}H) = \frac{1}{\alpha^2}\trace(KP_0^{2\alpha}H).
	\end{align*}
	On the other hand,
	\begin{align*}
	&D\pi_{\alpha}(A_0)(HA_0) =\frac{\alpha}{2}D\exp\left(\frac{1}{2\alpha}\log(\alpha^2 A_0A_0^{*})\right) \compose D\log(\alpha^2 A_0A_0^{*}) (HA_0A_0^{*} + A_0A_0^{*}H)
	\\
	& = \frac{1}{2\alpha}D\exp(\log(P_0))\compose D\log(P_0^{2\alpha})(HP_0^{2\alpha} + P_0^{2\alpha}H)
	\\
	&D\pi_{\alpha}(A_0)(KA_0) =   \frac{1}{2\alpha}D\exp(\log(P_0))\compose D\log(P_0^{2\alpha})(KP_0^{2\alpha} + P_0^{2\alpha}K).
	\end{align*}
	Thus in order to have an isometry, we need 
	\begin{align*}
	& \frac{1}{4}\la D\exp(\log(P_0))\compose D\log(P_0^{2\alpha})(HP_0^{2\alpha} + P_0^{2\alpha}H), D\exp(\log(P_0))\compose D\log(P_0^{2\alpha})(KP_0^{2\alpha} + P_0^{2\alpha}K)\ra_{P_0}
	\\
	&= \trace(KP_0^{2\alpha}H).
	\end{align*}
	By Proposition \ref{proposition:general-lyapunov}, for any $Y \in \Sym(n)$, there is a unique $H \in \Sym(n)$ such that
	\begin{align*}
	D\exp(\log(P_0))\compose D\log(P_0^{2\alpha})(HP_0^{2\alpha} + P_0^{2\alpha}H) = Y.
	\end{align*}
	Recall that we denote such an $H$ by $H = \Lcal_{P_0, \alpha}(Y)$. Then the isometry requirement is equivalent to
	\begin{align*}
	\la Y, Z\ra_{P_0} = 4\trace(\Lcal_{P_0, \alpha}(Z)P_0^{2\alpha}\Lcal_{P_0, \alpha}(Y)) = 4\trace(\Lcal_{P_0, \alpha}(Y)P_0^{2\alpha}\Lcal_{P_0, \alpha}(Z)). 
	\end{align*}
	This gives the desired Riemannian metric on $\Sym^{++}(n)$.
	\qed
\end{proof}

\begin{proof}
	[\textbf{of Theorem \ref{theorem:geodesic}}]
	Let $U$ be the unitary polar factor for $B^{\alpha}A^{\alpha}$, so that
	\begin{align*}
	B^{\alpha}A^{\alpha} = U(A^{\alpha}B^{2\alpha}A^{\alpha})^{1/2}
	\end{align*}
	Then $U$ can be expressed as
	\begin{align*}
	&U = B^{\alpha}A^{\alpha}(A^{\alpha}B^{2\alpha}A^{\alpha})^{-1/2} 
	= B^{\alpha}A^{\alpha}(A^{\alpha}B^{2\alpha}A^{\alpha})^{-1/2}A^{-\alpha}A^{\alpha}
	\\
	& = B^{\alpha}(A^{2\alpha}B^{2\alpha})^{-1/2}A^{\alpha} = B^{-\alpha}B^{2\alpha}(B^{-2\alpha}A^{-2\alpha})^{1/2}A^{\alpha}
	= B^{-\alpha}(A^{-2\alpha} \# B^{2\alpha})A^{\alpha}.
	\end{align*}
	Define the following curve
	\begin{align*}
	Z(t) &= \frac{1}{|\alpha|}[(1-t)A^{\alpha} + t B^{\alpha}U] = \frac{1}{|\alpha|}[(1-t)A^{\alpha} + t(A^{-2\alpha} \# B^{2\alpha})A^{\alpha}]
	\\
	& = \frac{1}{|\alpha|}[(1-t)I + t(A^{-2\alpha} \# B^{2\alpha})]A^{\alpha},
	\end{align*}
	with $Z(0) = \frac{1}{|\alpha|}A^{\alpha}$ and $Z(1) = \frac{1}{|\alpha|}(A^{-2\alpha} \# B^{2\alpha})A^{\alpha}$. Note that $Z(t) \in GL(n)$ for all $0 \leq t \leq 1$, since it is a product of two SPD matrices. Also, since $Z(t)$ is a straight line segment 
	joining $A^{\alpha}$ and $(A^{-2\alpha} \# B^{2\alpha})A^{\alpha}$, it is a geodesic curve in $\GL(n)$. Its derivative with respect to $t$ is given by
	\begin{align*}
	Z{'}(t) = \frac{1}{|\alpha|}[-I + (A^{-2\alpha} \# B^{2\alpha})]A^{\alpha} \in \H_{A^{\alpha}}.
	\end{align*}
	Thus $Z{'}(t)$ is horizontal for all $t$ and hence by Theorem \ref{theorem:geodesic-submersion}, $\gamma = \pi_{\alpha}\compose Z$ is a geodesic in 
	the Riemannian manifold $\Sym^{++}(n)$, with
	\begin{align*}
	\gamma(0) = (\alpha^2 Z(0)Z(0))^{1/(2\alpha)} = A, 
	\gamma(1) = (\alpha^2 Z(1)Z(1))^{1/(2\alpha)} = B.
	\end{align*}
	It follows that $\gamma$ is a geodesic joining $A$ and $B$ in $\Sym^{++}(n)$, with
	\begin{align*}
	&\gamma(t) = (\alpha^2Z(t)Z(t)^{*})^{1/(2\alpha)}
	\\
	&= [(1-t)^2A^{2\alpha} + t^2B^{2\alpha} + t(1-t)[(A^{2\alpha}B^{2\alpha})^{1/2} + (B^{2\alpha}A^{2\alpha})^{1/2}]]^{1/(2\alpha)}.
	\end{align*}
	Furthermore, by Theorem \ref{theorem:geodesic-submersion}, the length of $\gamma = \pi_{\alpha} \compose Z$ is the same as 
	that of $Z$, which, being in a straight line in $(\GL(n), \la \;,\;\ra_F)$, is given by
	\begin{align*}
	L(\gamma) = L(Z) = \frac{1}{|\alpha|}||A^{\alpha} -B^{\alpha}U||_F 
	= \frac{1}{|\alpha|}[\trace(A^{2\alpha}+B^{2\alpha} - 2(A^{\alpha}B^{2\alpha}A^{\alpha}))]^{1/2}
	\end{align*}
	where the last equality follows from Theorem 1 in \cite{Bhatia:2018Bures}.
	
	By employing a similar argument with horizontal curve lifting as in \cite{Bhatia:2018Bures}, this is also the minimum possible length of any curve joining $A,B \in \Sym^{++}(n)$. Thus it must be the Riemmanian distance $d(A,B)$. \qed
\end{proof}

\section{Proofs for the infinite-dimensional case}
\label{section:proofs-infinite}

\begin{proof}
	[\textbf{of Lemma \ref{lemma:polar-decomp-HSX}}]
	Consider the polar decomposition when $\gamma = 1$,
	\begin{align*}
	& I+A = S|I+A| = S[(I+A^{*})(I+A)]^{1/2}.
	\end{align*}
	Since, necessarily, $|I+A| = [(I+A^{*})(I+A)]^{1/2} \in \PC_2(\H)$, we must have $S = (I+A)|I+A|^{-1} = I+R \in \HS_X(\H)$,
	where $R \in \HS_X(\H)$. Therefore,
	\begin{align*}
	&S^{*}S  = |I+A|^{-1}(I+A^{*})(I+A)|I+A|^{-1} = I,
	\\
	& SS^{*} = (I+A)|I+A|^{-2}(I+A^{*}) = I.
	\end{align*}
	The general case reduces to the case $\gamma =1 $ since
	\begin{align*}
	& (A+\gamma I) = S|A+\gamma I| \imply S = (A+\gamma I)|A+\gamma I|^{-1} = (A/\gamma + I)|A/\gamma + I|^{-1}.
	\end{align*}
	This completes the proof. \qed
\end{proof}

\begin{lemma}
	\label{lemma:I+U}
Assume that $I+U \in \Ubb(\H) \cap \HS_X(\H)$. Then
$U^{*} + U = - U^{*}U = - UU^{*} \in \Tr(\H)$.
\end{lemma}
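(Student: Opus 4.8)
The plan is to read off both consequences directly from the definition of a unitary operator, using the fact that, in the normalized presentation $I+U \in \HS_X(\H)$, the perturbation $U$ itself lies in $\HS(\H)$. First I would write down the two defining relations of a unitary operator, namely $(I+U)^*(I+U) = I$ and $(I+U)(I+U)^* = I$.

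Expanding the first relation gives $I + U + U^* + U^*U = I$, hence $U + U^* = -U^*U$. Expanding the second gives $I + U + U^* + UU^* = I$, hence $U + U^* = -UU^*$. Putting the two together yields the chain of identities $U^* + U = -U^*U = -UU^*$, which is the first assertion.

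For the trace-class statement, I would note that $U \in \HS(\H)$ implies $U^* \in \HS(\H)$, and the product of two Hilbert-Schmidt operators is trace class, so $U^*U \in \Tr(\H)$ (equivalently $UU^* \in \Tr(\H)$). Combined with the identity $U^* + U = -U^*U$ just derived, this gives $U^* + U \in \Tr(\H)$, finishing the proof.

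I do not anticipate any real obstacle: the argument is a two-line expansion plus the elementary fact that $\HS(\H) \cdot \HS(\H) \subseteq \Tr(\H)$. The only point deserving a moment's attention is the bookkeeping that membership of $I+U$ in $\HS_X(\H)$ in the form written forces $U$ to be genuinely Hilbert-Schmidt (and not merely Hilbert-Schmidt up to a scalar multiple of the identity), which is the convention adopted when these operators were introduced just above.
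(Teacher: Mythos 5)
Your proof is correct and follows essentially the same route as the paper: expand $(I+U)^*(I+U)=I$ and $(I+U)(I+U)^*=I$ to get $U^*+U=-U^*U=-UU^*$, then use that $U\in\HS(\H)$ forces $U^*U, UU^*\in\Tr(\H)$ since the product of two Hilbert--Schmidt operators is trace class. Your closing remark about the convention that writing $I+U\in\HS_X(\H)$ means $U$ itself is Hilbert--Schmidt is a fair observation, and it is indeed the convention the paper adopts implicitly.
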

\begin{proof}
	Since $U \in \HS(\H)$, we have $U^{*}U, UU^{*} \in \Tr(\H)$. Since $(I+U) \in \Ubb(\H)$,
\begin{align*}
&I = (I+U)^{*}(I+U) = I + U^{*} + U + U^{*}U \imply U^{*} + U = - U^{*}U \in \Tr(\H).
\end{align*}
Similarly, $U^{*} + U = - UU^{*} \in \Tr(\H)$. \qed
\end{proof}

\begin{lemma}
	\label{lemma:I+A-trace}
	Let $(I+A) \in \PC_2(\H)$ be fixed. 
	Let $\alpha \in \R, \alpha > 0$ be fixed.
	For $(I+U) \in \Ubb(\H) \cap \HS_X(\H)$,
	\begin{align}
	(I+A)^{2\alpha} - (I+A)^{\alpha}(I+U^{*}) - (I+U)(I+A)^{\alpha} + I \in \Tr(\H),
	\end{align} 
	and
	\begin{align}
	&||(I+A)^{\alpha}(I+U)||^2_{\eHS}
	\nonumber
	\\
	& = 1 + \trace[(I+A)^{2\alpha} - (I+A)^{\alpha}(I+U^{*}) - (I+U)(I+A)^{\alpha} + I]
	\\
	& = 1 + \trace[(I+A)^{2\alpha} - (I+A)^{\alpha}(I+U^{*}) - (I+A)^{\alpha}(I+U) + I].
	\end{align}
	For $U = 0$,
	\begin{align}
	& ||(I+A)^{\alpha}||^2_{\eHS} = 1 + \trace[(I+A)^{2\alpha} - 2(I+A)^{\alpha} + I].
	\end{align}
\end{lemma}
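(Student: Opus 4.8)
The plan is to reduce the whole statement to a finite algebraic computation inside $\HS(\H)$, using two facts already available: first, for $(I+A)\in\PC_2(\H)$ we may write $(I+A)^{\alpha}=I+C$ with $C\in\Sym(\H)\cap\HS(\H)$, as noted just before Proposition~\ref{proposition:limit-UV-same}; second, since $(I+U)\in\Ubb(\H)\cap\HS_X(\H)$, Lemma~\ref{lemma:I+U} gives $U^{*}+U=-U^{*}U=-UU^{*}\in\Tr(\H)$, and in particular $U^{*}U=UU^{*}$.

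First I would prove the trace-class membership. Expanding $(I+A)^{2\alpha}-(I+A)^{\alpha}(I+U^{*})-(I+U)(I+A)^{\alpha}+I$ in terms of $C$ and $U$, the four copies of $I$ cancel, the terms linear in $C$ cancel ($2C-C-C=0$), and what survives is $C^{2}-CU^{*}-UC-(U^{*}+U)$. Each of $C^{2},CU^{*},UC$ is a product of two Hilbert-Schmidt operators (or a trace-class operator times a bounded one), hence trace class, and $U^{*}+U\in\Tr(\H)$ by Lemma~\ref{lemma:I+U}; this proves the first display. It also records, for later use, the identity $\trace[(I+A)^{2\alpha}-(I+A)^{\alpha}(I+U^{*})-(I+U)(I+A)^{\alpha}+I]=\trace[C^{2}-CU^{*}-UC+U^{*}U]$.

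For the norm identity I would write $(I+A)^{\alpha}(I+U)=I+D$ with $D=C+U+CU\in\HS(\H)$ and $D^{*}=C+U^{*}+U^{*}C$, so that by the definition of the extended Hilbert-Schmidt norm $\|(I+A)^{\alpha}(I+U)\|_{\eHS}^{2}=1+\|D\|_{\HS}^{2}=1+\trace(D^{*}D)$. Expanding $D^{*}D$ into its nine summands (all trace class) and taking traces termwise, I would then collapse the mixed terms by combining cyclicity of the trace with $U^{*}U=UU^{*}=-(U^{*}+U)$: the pieces $\trace(C^{2}U)+\trace(U^{*}C^{2})=-\trace(C^{2}U^{*}U)$ and $2\trace(U^{*}CU)=2\trace(CU^{*}U)$ cancel against $2\trace(CU^{*})+2\trace(UC)=-2\trace(CU^{*}U)$, leaving $\trace(D^{*}D)=\trace(C^{2})+\trace(U^{*}U)-\trace(CU^{*})-\trace(UC)$, which matches the identity recorded above. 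The two displayed forms of the final equality coincide because $\trace[(I+U)(I+A)^{\alpha}]=\trace[(I+A)^{\alpha}(I+U)]$ by cyclicity. The special case $U=0$ is then immediate: $\|I+C\|_{\eHS}^{2}=1+\trace(C^{2})$ while $\trace[(I+C)^{2}-2(I+C)+I]=\trace(C^{2})$.

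The main obstacle is the middle step: because $M=(I+A)^{\alpha}(I+U)$, $M^{*}M$, $M$, $M^{*}$ are individually not trace class, one cannot legitimately write $\trace(D^{*}D)=\trace(M^{*}M)-\trace(M^{*})-\trace(M)+\trace(I)$ and manipulate these pieces separately. One must first fully expand $D^{*}D$ into its trace-class summands and only afterwards invoke cyclicity and the unitarity relation $U^{*}+U=-U^{*}U$; the resulting cancellation is short but must be carried out carefully so that every intermediate expression is genuinely trace class.
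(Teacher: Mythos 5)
Your proof is correct and follows essentially the same route as the paper's: write $(I+A)^{\alpha}=I+C$ with $C\in\HS(\H)$, invoke Lemma~\ref{lemma:I+U} to get $U^{*}+U=-U^{*}U\in\Tr(\H)$, and reduce both the trace-class claim and the norm identity to algebra among Hilbert--Schmidt and trace-class summands. The only difference is in the middle step, where the paper sidesteps your nine-term expansion of $D^{*}D$ by observing that, with $M=(I+A)^{\alpha}(I+U)$ and $X$ the trace-class operator established in the first part, one has $(M^{*}-I)(M-I)=(I+U^{*})\,X\,(I+U)$, so unitary invariance of the trace gives $\trace[(M^{*}-I)(M-I)]=\trace X$ at once; your direct term-by-term cancellation (which you correctly insist must be performed only after full expansion into trace-class pieces) reaches the same conclusion.
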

\begin{proof}
	Write
	$(I+A)^{\alpha} = I +B$, where  $B \in \HS(\H)$.
	By Lemma \ref{lemma:I+U},
	\begin{align*}
	&(I+A)^{2\alpha} - (I+A)^{\alpha}(I+U^{*}) - (I+U)(I+A)^{\alpha} + I
	\\
	& = (I+B)^2 - (I+B)(I+U^{*}) - (I+U)(I+B)
	\\
	& = (I+2B + B^2) - (I+B +U^{*} + BU^{*}) - (I+U+B +UB) + I
	\\
	& = B^2 +U^{*}U  - BU^{*} - UB  \in \Tr(\H),\;\;\;\text{since $B \in \HS(\H), U \in \HS(\H)$}.
	\end{align*}
	It follows that
	\begin{align*}
	&||(I+A)^{\alpha}(I+U)||^2_{\eHS} = ||[(I+A)^{\alpha}(I+U)- I] + I||^2_{\eHS} 
	\\
	& = ||[(I+A)^{\alpha}(I+U)- I]||^2_{\HS} + 1 
	\\
	&= 1 + \trace[(I+U^{*})(I+A)^{2\alpha}(I+U)- (I+U^{*})(I+A)^{\alpha} - (I+A)^{\alpha}(I+U) + I]
	\\
	& = 1 + \trace[(I+U^{*})[(I+A)^{2\alpha} - (I+A)^{\alpha}(I+U)^{*} - (I+U)(I+A)^{\alpha} + I](I+U)]
	\\
	& = 1 + \trace[(I+A)^{2\alpha} - (I+A)^{\alpha}(I+U^{*}) - (I+U)(I+A)^{\alpha} + I]
	\\
	& = 1 + \trace[(I+A)^{2\alpha} - (I+A)^{\alpha}(I+U^{*}) - (I+A)^{\alpha}(I+U) + I],
	\end{align*}
	where the last equality follows from $\trace(AB) = \trace(BA)$ for $A,B$ compact.\qed
\end{proof}

\begin{proof}
	[\textbf{of Proposition \ref{proposition:limit-UV-same}}]
	On the one hand, by Lemma \ref{lemma:I+A-trace},
	\begin{align*}
	&||(I+A)^{\alpha}(I+U) - (I+B)^{\alpha}(I+V)||^2_{\eHS}
	\\
	& = ||(I+A)^{\alpha}(I+U)||^2_{\eHS} + (I+B)^{\alpha}(I+V)||^2_{\eHS}
	\\
	& - 2 \la (I+A)^{\alpha}(I+U), (I+B)^{\alpha}(I+V)\ra_{\eHS}
	\\
	& =  \trace[(I+A)^{2\alpha} - (I+A)^{\alpha}(I+U^{*}) - (I+A)^{\alpha}(I+U) + I]
	\\
	&  + \trace[(I+B)^{2\alpha} - (I+B)^{\alpha}(I+V^{*}) - (I+B)^{\alpha}(I+V) + I]
	\\
	&- 2\la [(I+A)^{\alpha}(I+U) - I], [(I+B)^{\alpha}(I+V)-I]\ra_{\HS}
	\\
	& = \trace[(I+A)^{2\alpha} - (I+A)^{\alpha}(I+U^{*}) - (I+A)^{\alpha}(I+U) + I]
	\\
	&  + \trace[(I+B)^{2\alpha} - (I+B)^{\alpha}(I+V^{*}) - (I+V)^{\alpha}(I+V) + I]
	\\
	&- \trace[(I+U^{*})(I+A)^{\alpha} - I][(I+B)^{\alpha}(I+V)-I]]
	\\
	& - \trace[(I+V^{*})(I+B)^{\alpha} - I][(I+A)^{\alpha}(I+U)- I]
	\\
	& = \trace[(I+A)^{2\alpha} - (I+A)^{\alpha}(I+U^{*}) - (I+A)^{\alpha}(I+U) + I]
	\\
	&  + \trace[(I+B)^{2\alpha} - (I+B)^{\alpha}(I+V^{*}) - (I+B)^{\alpha}(I+V) + I]
	\\
	&- \trace[(I+U^{*})(I+A)^{\alpha}(I+B)^{\alpha}(I+V) - (I+U^{*})(I+A)^{\alpha} - (I+B)^{\alpha}(I+V) + I]
	\\
	& - \trace[(I+V^{*})(I+B)^{\alpha}(I+A)^{\alpha}(I+U) - (I+V^{*})(I+B)^{\alpha} - (I+A)^{\alpha}(I+U) + I]
	\\
	& = \trace[(I+A)^{2\alpha} - (I+A)^{\alpha}(I+U^{*}) - (I+A)^{\alpha}(I+U) + I]
	\\
	&  + \trace[(I+B)^{2\alpha} - (I+B)^{\alpha}(I+V^{*}) - (I+B)^{\alpha}(I+V) + I]
	\\
	&- \trace[(I+V)(I+U^{*})(I+A)^{\alpha}(I+B)^{\alpha} - (I+A)^{\alpha}(I+U^{*}) - (I+B)^{\alpha}(I+V) + I]
	\\
	& - \trace[(I+U)(I+V^{*})(I+B)^{\alpha}(I+A)^{\alpha} - (I+B)^{\alpha}(I+V^{*}) - (I+A)^{\alpha}(I+U) + I]
	\\
	& = \trace[(I+A)^{2\alpha} + (I+B)^{2\alpha} - (I+V)(I+U^{*})(I+A)^{\alpha}(I+B)^{\alpha} - (I+U)(I+V^{*})(I+B)^{\alpha}(I+A)^{\alpha}].
	\end{align*}
	On the other hand, since $(I+A)^{\alpha} - (I+B)^{\alpha}(I+V) \in \HS(\H)$, we have
	\begin{align*}
	&||(I+A)^{\alpha} - (I+B)^{\alpha}(I+V)||^2_{\eHS} = ||(I+A)^{\alpha} - (I+B)^{\alpha}(I+V)||^2_{\HS}
	\\
	&= \trace[((I+A)^{\alpha} - (I+V)^{*}(I+B)^{\alpha})((I+A)^{\alpha} - (I+B)^{\alpha}(I+V))]
	\\
	& = \trace[(I+A)^{2\alpha} - (I+A)^{\alpha}(I+B)^{\alpha}(I+V) - (I+V)^{*}(I+B)^{\alpha}(I+A)^{\alpha} + (I+V)^{*}(I+B)^{2\alpha}(I+V)]
	\\
	& = \trace[(I+A)^{2\alpha} +  (I+B)^{2\alpha}- (I+V)(I+A)^{\alpha}(I+B)^{\alpha} - (I+V^{*})(I+B)^{\alpha}(I+A)^{\alpha}].
	\end{align*}
	Thus minimizing the first expression over $(I+U),(I+V) \in \Ubb(\H) \cap \HS_X(\H)$ is equivalent to minimizing the second
	expression over  $(I+U) \in \Ubb(\H) \cap \HS_X(\H)$. 
	 \qed
\end{proof}

\begin{proposition}
	\label{proposition:HS-to-Tr}
	Let $(I+A), (I+B) \in \HS_X(\H)$ be invertible. Then
	\begin{align}
	&(I+A)^2 + (I+B)^2 - 2[(I+A)(I+B)^2(I+A)]^{1/2} \in \Tr(\H),
	\\
	&[(I+A)(I+B)^2(I+A)]^{1/2} - (I+A) - (I+B) + I \in \Tr(\H).
	\end{align}
\end{proposition}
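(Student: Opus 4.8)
The plan is to express the square-root term as the operator modulus of a single operator of the form ``identity plus Hilbert--Schmidt'', and then to peel off its trace-class part by a short algebraic identity. Since the expressions in the statement are only defined when $I+A$ and $I+B$ are self-adjoint, throughout we take $A, B \in \Sym(\H) \cap \HS(\H)$. First I would set $M = (I+B)(I+A)$; using $(I+A)^{*} = I+A$ and $(I+B)^{*} = I+B$ one has $M^{*}M = (I+A)(I+B)^{2}(I+A)$, so that $[(I+A)(I+B)^{2}(I+A)]^{1/2} = |M|$. Moreover $M = I + N$ with $N = A + B + BA \in \HS(\H)$, since $A, B \in \HS(\H)$ and $BA \in \Tr(\H) \subset \HS(\H)$.

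Next I would show that $|M| = I + R$ for a self-adjoint $R \in \HS(\H)$: from $|M|^{2} - I = M^{*}M - I = N + N^{*} + N^{*}N \in \HS(\H)$, the factorization $|M|^{2} - I = (|M| - I)(|M| + I)$, and the boundedness of $(|M| + I)^{-1}$ (valid because $|M| \geq 0$), one gets $|M| - I = (|M|^{2} - I)(|M| + I)^{-1} \in \HS(\H)$. The crucial step is then to identify $R$ modulo trace-class operators. Comparing $|M|^{2} = (I+R)^{2} = I + 2R + R^{2}$ with $|M|^{2} = I + N + N^{*} + N^{*}N$ gives $2R + R^{2} = N + N^{*} + N^{*}N$, hence $R = \tfrac{1}{2}\bigl(N + N^{*} + N^{*}N - R^{2}\bigr)$. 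Since $A^{*} = A$, $B^{*} = B$ and $(BA)^{*} = AB$, we have $N + N^{*} = 2(A+B) + AB + BA$, while $AB$, $BA$, $N^{*}N$ and $R^{2}$ are each a product of two Hilbert--Schmidt operators and hence trace class. Therefore $R - (A+B) = \tfrac{1}{2}\bigl(AB + BA + N^{*}N - R^{2}\bigr) \in \Tr(\H)$. This is where the main difficulty lies: a priori $|M| - M = R - N$ is only Hilbert--Schmidt, and it is precisely the structure of $N$ --- namely that $N + N^{*} - 2(A+B) \in \Tr(\H)$ --- that upgrades the conclusion to trace class.

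Finally both assertions follow by substitution. For the second, $[(I+A)(I+B)^{2}(I+A)]^{1/2} - (I+A) - (I+B) + I = (I+R) - (I+A) - (I+B) + I = R - A - B \in \Tr(\H)$. For the first, $(I+A)^{2} + (I+B)^{2} - 2[(I+A)(I+B)^{2}(I+A)]^{1/2} = (2A + A^{2}) + (2B + B^{2}) - 2R = -2(R - A - B) + A^{2} + B^{2}$, which lies in $\Tr(\H)$ because $R - A - B \in \Tr(\H)$ and $A^{2}, B^{2} \in \Tr(\H)$. Apart from the trace-class upgrade in the preceding paragraph, everything is routine bookkeeping with the Hilbert--Schmidt and trace-class operator ideals.
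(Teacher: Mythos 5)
Your proof is correct, and it takes a genuinely different route from the paper's. The paper factors $(I+B)(I+A) = (I+S)[(I+A)(I+B)^2(I+A)]^{1/2}$ via its polar-decomposition lemma (Lemma \ref{lemma:polar-decomp-HSX}), writes the square root as the symmetrization $\tfrac12[(I+S^{*})(I+B)(I+A) + (I+A)(I+B)(I+S)]$, expands, and invokes the unitarity relation $S + S^{*} = -S^{*}S \in \Tr(\H)$ (Lemma \ref{lemma:I+U}) to absorb the lone linear term in $S$. You instead never introduce the unitary factor: you show directly that $R = |M| - I \in \HS(\H)$ via the factorization $|M|^2 - I = (|M|-I)(|M|+I)$ and the bounded inverse of $|M|+I$, and then extract the trace-class statement $R - (A+B) \in \Tr(\H)$ by comparing the quadratic identity $2R + R^2 = N + N^{*} + N^{*}N$ with $N + N^{*} = 2(A+B) + AB + BA$. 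Both arguments are sound; the paper's fits the machinery it reuses elsewhere (the same polar factor reappears in the proof of Theorem \ref{theorem:Pdistance-AI-HS}), while yours is more elementary and self-contained, and --- since it never inverts $M$ --- it also goes through verbatim without the invertibility hypothesis, needing only that $I+A$ and $I+B$ be self-adjoint.
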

\begin{proof}
	[\textbf{of Proposition \ref{proposition:HS-to-Tr}}]
	Write $I+C = (I+A)(I+B)^2(I+A) = [(I+B)(I+A)]^{*}[(I+B)(I+A)]$. By Lemma \ref{lemma:polar-decomp-HSX},
	the polar decomposition of $(I+B)(I+A)$ has the form
	\begin{align*}
	(I+B)(I+A) = (I+S)(I+C)^{1/2}, \;\;\;\text{for some $I+S \in \Ubb(\H) \cap \HS_X(\H)$},
	\end{align*}
	so that
	\begin{align*}
	(I+C)^{1/2} = (I+S^{*})(I+B)(I+A) = (I+A)(I+B)(I+S)
	\end{align*}
	It follows that
	\begin{align*}
	& (I+A)^2 + (I+B)^2 - 2[(I+A)(I+B)^2(I+A)]^{1/2} 
	\\
	&= (I+A)^2 + (I+B)^2 - (I+S^{*})(I+B)(I+A) - (I+A)(I+B)(I+S)
	\\
	& = (I+2A + A^2)+(I + 2B + B^2) - (I+S^{*})(I+A+B + BA) - (I+A+B + AB)(I+S)
	\\
	& = A^2 + B^2 - (S^{*} + S^{*}A + S^{*}B + BA + S^{*}BA) - (S +AS +BS + AB + ABS)
	\\
	& = A^2 + B^2 + S^{*}S - (S^{*}A + S^{*}B + BA + S^{*}BA) - (AS +BS + AB + ABS) \in \Tr(\H)
	\end{align*}
	since $A, B, S \in \HS(\H)$ and $S+S^{*} = - S^{*}S \in \Tr(\H)$ by Lemma \ref{lemma:I+U}.
	Similarly,
	\begin{align*}
	&[(I+A)(I+B)^2(I+A)]^{1/2} - (I+A) - (I+B)  + I 
	\\
	& = \frac{1}{2}(I+S^{*})(I+B)(I+A) +\frac{1}{2}(I+A)(I+B)(I+S)- (I+A) - (I+B)  + I 
	\\
	& = \frac{1}{2}(S^{*}S + S^{*}A + S^{*}B + BA + S^{*}BA + AS +BS + AB + ABS) \in \Tr(\H).
	\end{align*}
	This completes the proof.\qed
\end{proof}

\begin{proof}
	[\textbf{of Proposition \ref{proposition:HS-to-Tr-alpha}}]
	This follows from Proposition \ref{proposition:HS-to-Tr}
	by replacing $(I+A)$ with $(I+A)^{\alpha}$. The condition
	$(I+A) \in \PC_2(\H)$ is necessary so that
	$(I+A)^{\alpha}$ is well-defined for all $\alpha \in \R$. 
	\qed
\end{proof}

\begin{proof}	
	[\textbf{ of Theorem \ref{theorem:Pdistance-AI-HS}}]
	\begin{align*}
	&||(I+A)^{\alpha} - (I+B)^{\alpha}(I+U)||^2_{\eHS} = ||(I+A)^{\alpha} - (I+B)^{\alpha}(I+U)||^2_{\HS} 
	\\
	&= \trace[((I+A)^{\alpha} - (I+U)^{*}(I+B)^{\alpha})((I+A)^{\alpha} - (I+B)^{\alpha}(I+U))]
	\\
	& = \trace[(I+A)^{2\alpha} - (I+A)^{\alpha}(I+B)^{\alpha}(I+U) - (I+U)^{*}(I+B)^{\alpha}(I+A)^{\alpha} + (I+U)^{*}(I+B)^{2\alpha}(I+U)]
	\\
	& = \trace[(I+A)^{2\alpha} + (I+B)^{2\alpha} - (I+A)^{\alpha}(I+B)^{\alpha}(I+U) - (I+U^{*})(I+B)^{\alpha}(I+A)^{\alpha}].
	\end{align*}
	Consider the operator
	\begin{align*}
	I+C = [(I+B)^{\alpha}(I+A)^{\alpha}]^{*}(I+B)^{\alpha}(I+A)^{\alpha} = (I+A)^{\alpha}(I+B)^{2\alpha}(I+A)^{\alpha} \in \PC_2(\H).
	\end{align*}
	By Lemma \ref{lemma:polar-decomp-HSX}, the polar decomposition of $(I+B)^{\alpha}(I+A)^{\alpha}$ has the form
	\begin{align*}
	(I+B)^{\alpha}(I+A)^{\alpha} = (I+S)(I+C)^{1/2}, \;\;\text{for some $I+S \in \Ubb(\H) \cap \HS_X(\H)$}.
	\end{align*}
	It follows that
	\begin{align*}
	&(I+U^{*})(I+B)^{\alpha}(I+A)^{\alpha} = (I+U^{*})(I+S)(I+C)^{1/2},
	\\
	&(I+A)^{\alpha}(I+B)^{\alpha}(I+U) = (I+C)^{1/2}(I+S^{*})(I+U).
	\end{align*}
	Since $(I+U^{*})(I+S) \in \Ubb(\H) \cap \HS_X(\H)$, it follows that
	\begin{align*}
	&\min_{(I+U) \in \Ubb(\H) \cap \HS_X(\H)} ||(I+A)^{\alpha} - (I+B)^{\alpha}(I+U)||^2_{\eHS}
	\\
	& = \min_{(I+U) \in \Ubb(\H) \cap \HS_X(\H)} \trace[(I+A)^{2\alpha} + (I+B)^{2\alpha} - (I+U^{*})(I+C)^{1/2}- (I+C)^{1/2}(I+U)]
	\\
	& = \min_{(I+U) \in \Ubb(\H) \cap \HS_X(\H)}\trace[(I+A)^{2\alpha} + (I+B)^{2\alpha} - (I+U^{*})(I+C)^{1/2}- (I+U)(I+C)^{1/2}].
	\end{align*}
	By Proposition \ref{proposition:HS-to-Tr-alpha},
	\begin{align*}
	&(I+A)^{2\alpha} + (I+B)^{2\alpha} - 2(I+C)^{1/2} 
	\\
	&= (I+A)^{2\alpha} + (I+B)^{2\alpha} - 2[(I+A)^{\alpha}(I+B)^{2\alpha}(I+A)^{\alpha}]^{1/2}
	\in \Tr(\H).
	\end{align*}
	Since $I+U \in \Ubb(\H) \cap \HS_X(\H)$, by Lemma \ref{lemma:I+U}, $U^{*} + U = - U^{*}U \in \Tr(H)$. Thus
	\begin{align*}
	&\min_{(I+U) \in \Ubb(\H) \cap \HS_X(\H)} ||(I+A)^{\alpha} - (I+B)^{\alpha}(I+U)||^2_{\eHS}
	\\
	&= \trace[(I+A)^{2\alpha} + (I+B)^{2\alpha} - 2(I+C)^{1/2}]
	\\
	& + \min_{(I+U) \in \Ubb(\H) \cap \HS_X(\H)}\trace[U^{*}U(I+C)^{1/2}]
	\\
	& = \trace[(I+A)^{2\alpha} + (I+B)^{2\alpha} - 2(I+C)^{1/2}],
	\end{align*}
	since obviously, with $\trace[U^{*}U(I+C)^{1/2}] = \trace[U^{*}(I+C)^{1/2}U] = ||(I+C)^{1/4}U||^2_{\HS}$,
	\begin{align*}
	&\min_{(I+U) \in \Ubb(\H) \cap \HS_X(\H)}\trace[U^{*}U(I+C)^{1/2}] = \min_{(I+U) \in \Ubb(\H) \cap \HS_X(\H)}||(I+C)^{1/4}U||^2_{\HS} = 0,
	\end{align*}
	with the minimum occurring precisely at $U = 0$. This gives Eq.(\ref{equation:min-AI-HS-1}).
	The expression in Eq.(\ref{equation:min-AI-HS-2}) then follows from Lemma \ref{lemma:I+A-trace}.
	\qed
	\end{proof}

The following is immediate from the definition of $d^{\alpha}_{\proHS}$.

\begin{lemma}
	\label{lemma:Pdistance-decompose}
\begin{align}
d^{\alpha}_{\proHS}[(A+\gamma I), (B+\gamma I)] = \gamma^{\alpha}d^{\alpha}_{\proHS}
[(A/\gamma + I), (B/\gamma + I)].
\end{align}
\end{lemma}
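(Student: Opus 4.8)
The plan is to unwind Definition~\ref{definition:alpha-procrustes-infinite-2} and exploit the homogeneity of the operator power $X \mapsto X^{\alpha}$ under multiplication by a positive scalar. First I would note that since $\gamma > 0$, we may write $A+\gamma I = \gamma(A/\gamma + I)$ and $B + \gamma I = \gamma(B/\gamma + I)$, with $(A/\gamma + I), (B/\gamma + I) \in \PC_2(\H)$, so that by the functional calculus $(A+\gamma I)^{\alpha} = \gamma^{\alpha}(A/\gamma + I)^{\alpha}$ and $(B+\gamma I)^{\alpha} = \gamma^{\alpha}(B/\gamma + I)^{\alpha}$, where $\gamma^{\alpha}$ is a well-defined strictly positive real number.

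Next, for an arbitrary but fixed $(I+U) \in \Ubb(\H) \cap \HS_X(\H)$, I would factor
\begin{align*}
(A+\gamma I)^{\alpha} - (B+\gamma I)^{\alpha}(I+U) = \gamma^{\alpha}\left[(A/\gamma + I)^{\alpha} - (B/\gamma + I)^{\alpha}(I+U)\right],
\end{align*}
and extract the positive constant $\gamma^{\alpha}$ from the extended Hilbert--Schmidt norm to obtain
\begin{align*}
\left\|\frac{(A+\gamma I)^{\alpha} - (B+\gamma I)^{\alpha}(I+U)}{\alpha}\right\|_{\eHS}
= \gamma^{\alpha}\left\|\frac{(A/\gamma + I)^{\alpha} - (B/\gamma + I)^{\alpha}(I+U)}{\alpha}\right\|_{\eHS}.
\end{align*}

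Finally, since the feasible set $\{(I+U) : (I+U) \in \Ubb(\H) \cap \HS_X(\H)\}$ over which the minimum in Definition~\ref{definition:alpha-procrustes-infinite-2} is taken does not depend on $\gamma$, minimizing both sides of the last display over this common set and using that multiplication by the fixed positive constant $\gamma^{\alpha}$ commutes with taking the infimum yields $d^{\alpha}_{\proHS}[(A+\gamma I), (B+\gamma I)] = \gamma^{\alpha} d^{\alpha}_{\proHS}[(A/\gamma + I), (B/\gamma + I)]$, as claimed. There is no genuine obstacle in this argument; the only subtlety worth stating explicitly is that $\gamma > 0$ is exactly what makes $\gamma^{\alpha}$ a positive scalar that can be pulled out of both the norm and the minimization.
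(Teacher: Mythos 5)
Your proof is correct and is exactly the argument the paper has in mind: the paper states this lemma is ``immediate from the definition,'' and your write-up simply makes explicit the scaling $(A+\gamma I)^{\alpha}=\gamma^{\alpha}(A/\gamma+I)^{\alpha}$, the homogeneity of $\|\cdot\|_{\eHS}$, and the fact that the feasible set of unitaries is independent of $\gamma$.
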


\begin{proof}
	[\textbf{of Theorem \ref{theorem:Pdistance-AgammaI}}]
	By Lemma \ref{lemma:Pdistance-decompose} and Theorem \ref{theorem:Pdistance-AI-HS},
	\begin{align*}
	&(d^{\alpha}_{\proHS}[(A+\gamma I), (B+ \gamma I)]) = \gamma^{2\alpha}(d^{\alpha}_{\proHS}
	[(A/\gamma + I), (B/\gamma + I)])^2
	\\
	& = \frac{\gamma^{2\alpha}}{\alpha^2}\left(\trace[(I+A/\gamma)^{2\alpha} + (I+B/\gamma)^{2\alpha} -2 [(I+A/\gamma)^{\alpha}(I+B/\gamma)^{2\alpha}(I+A/\gamma)^{\alpha}]^{1/2}]\right)
	\\
	& = \frac{1}{\alpha^2}\left(\trace[(A+ \gamma I)^{2\alpha} + (B + \gamma I)^{2\alpha} -2 [(A + \gamma I)^{\alpha}(B + \gamma I)^{2\alpha}(A  + \gamma I)^{\alpha}]^{1/2}]\right).
	\end{align*}
	Similarly,
	\begin{align*}
	&(d^{\alpha}_{\proHS}[(A+\gamma I), (B+ \gamma I)])
	\\
	& = \frac{\gamma^{2\alpha}}{\alpha^2}\left(||(I+A/\gamma)^{\alpha}||^2_{\eHS} + ||(I+B/\gamma)^{\alpha}||^2_{\eHS} - 2\right)
	\nonumber
	\\
	& - \frac{2\gamma^{2\alpha}}{\alpha^2}\left(\trace[[(I+A/\gamma)^{\alpha}(I+B/\gamma)^{2\alpha}(I+A/\gamma)^{\alpha}]^{1/2} - (I+A/\gamma)^{\alpha} - (I+B/\gamma)^{\alpha} + I]\right)
	\\
	& = \frac{1}{\alpha^2}\left(||(A + \gamma I)^{\alpha}||^2_{\eHS} + ||(B + \gamma I)^{\alpha}||^2_{\eHS} - 2\gamma^{2\alpha}\right)
	\\
	& - \frac{2}{\alpha^2}\left(\trace[[(A + \gamma I)^{\alpha}(B + \gamma I)^{2\alpha}(A + \gamma I)^{\alpha}]^{1/2} - \gamma^{\alpha}(A + \gamma I)^{\alpha} - \gamma^{\alpha}(B + \gamma I)^{\alpha} + \gamma^{2\alpha}I]\right).
	\end{align*}
	This completes the proof.
	\qed
\end{proof}

\begin{proof}
	[\textbf{of Lemma \ref{lemma:alpha-0-logHS-1}}]
	Consider the series expansion
\begin{align*}
(I+A)^{\alpha}  = \exp[\alpha\log(I+A)] = I + \alpha \log(I+A) + \frac{\alpha^2}{2!}[\log(I+A)]^2 + \cdots
\end{align*}
From this expansion it follows that
\begin{align*}
\frac{(I+A)^{\alpha} - I}{\alpha} - \log(I+A) = \frac{\alpha}{2!}[\log(I+A)]^2 + \frac{\alpha^2}{3!}[\log(I+A)]^3 + \cdots
\end{align*}
Therefore, since the set $\HS(\H)$ is an algebra,
\begin{align*}
&\left\|\frac{(I+A)^{\alpha} - I}{\alpha} - \log(I+A)\right\|_{\HS}
\\
& \leq |\alpha|||\log(I+A)||^2_{\HS}\left(\frac{1}{2!}
+\frac{|\alpha|}{3!}||\log(I+A)||_{\HS} + \frac{|\alpha|^2}{4!}||\log(I+A)||^2_{\HS} + \cdots\right)
\\
& \leq |\alpha|||\log(I+A)||^2_{\HS}\exp(|\alpha|||\log(I+A)||_{\HS}) \approach 0 \;\text{as $\alpha \approach \infty$}.
\end{align*}
By definition of the extended Hilbert-Schmidt norm,
\begin{align*}
||(I+A)^{\alpha}||^2_{\eHS} = ||[(I+A)^{\alpha} - I] + I||^2_{\eHS} = ||(I+A)^{\alpha} - I||^2_{\HS} + 1.
\end{align*}
It thus follows that
\begin{align*}
\lim_{\alpha \approach 0}\frac{||(I+A)^{\alpha}||^2_{\eHS} -1}{\alpha^2}
= \lim_{\alpha \approach 0}\frac{||(I+A)^{\alpha} - I||^2_{\HS}}{\alpha^2} = ||\log(I+A)||^2_{\HS}.
\end{align*}	
For $(A+\gamma I) \in \PC_2(\H)$, we then have
\begin{align*}
&\lim_{\alpha \approach 0}\frac{||(A+\gamma I)^{\alpha}||^2_{\eHS}-\gamma^{2\alpha}}{\alpha^2} = \lim_{\alpha \approach 0}\gamma^{2\alpha}\frac{||(A/\gamma+ I)^{\alpha}||^2_{\eHS}-1}{\alpha^2}
\\
& = ||\log(I+A/\gamma)||^2_{\HS}.
\end{align*}
This completes the proof.
\qed
\end{proof}

\begin{proof}
	[\textbf{of Lemma \ref{lemma:alpha-0-logHS-2}}]
	By Proposition \ref{proposition:HS-to-Tr-alpha}, for $(I+A), (I+B) \in \PC_2(\H)$,
	\begin{align*}
	[(I+A)^{\alpha}(I+B)^{2\alpha}(I+A)^{\alpha}]^{1/2} - (I+A)^{\alpha} - (I+B)^{\alpha} + I \in \Tr(\H),
	\end{align*}
so that the trace operation is well-defined. Consider the series expansions
\begin{align*}
&(I+A)^{\alpha} = \exp[\alpha\log(I+A)] = I + \alpha \log(I+A) + \frac{\alpha^2}{2!}[\log(I+A)]^2 + \cdots,
\\
&(I+B)^{\alpha} = \exp[\alpha\log(I+B)] = I + \alpha \log(I+B) + \frac{\alpha^2}{2!}[\log(I+B)]^2 + \cdots,
\\
&(I+A)^{\alpha}(I+B)^{2\alpha}(I+A)^{\alpha} = I + 2\alpha[\log(I+A) + \log(I+B)] 
\\
&+ 2\alpha^2[\log(I+A) + \log(I+B)]^2 + \cdots.
\end{align*}
For $\alpha$ sufficiently close to zero, 
\begin{align*}
[(I+A)^{\alpha}(I+B)^{2\alpha}(I+A)^{\alpha}]^{1/2} &= I + \alpha[\log(I+A) + \log(I+B)] 
\\
&+ \frac{\alpha^2}{2}[\log(I+A) + \log(I+B)]^2 + \cdots.
\end{align*}
It thus follows that
\begin{align*}
&\lim_{\alpha \approach 0}\frac{1}{\alpha^2}\trace[[(I+A)^{\alpha}(I+B)^{2\alpha}(I+A)^{\alpha}]^{1/2} - (I+A)^{\alpha} - (I+B)^{\alpha} + I]
\nonumber
\\
& = \frac{1}{2}\trace[\log(I+A)\log(I+B) + \log(I+B)\log(I+A)] = \trace[\log(I+A)\log(I+B)].
\end{align*}
For the general case $(A+\gamma I), (B+\gamma I) \in \PC_2(\H)$, we have
\begin{align*}
&\lim_{\alpha \approach 0}\frac{1}{\alpha^2}\trace[[(A + \gamma I)^{\alpha}(B + \gamma I)^{2\alpha}(A + \gamma I)^{\alpha}]^{1/2} - \gamma^{\alpha}(A+\gamma I)^{\alpha} - \gamma^{\alpha}(B+\gamma I)^{\alpha} + \gamma^{2\alpha}I]
\\
& = \lim_{\alpha \approach 0}\frac{\gamma^{2\alpha}}{\alpha^2}\trace[[(A/\gamma +  I)^{\alpha}(B/\gamma  +  I)^{2\alpha}(A/\gamma +  I)^{\alpha}]^{1/2} - (A/\gamma+ I)^{\alpha} - (B/\gamma+ I)^{\alpha} + I]
\\
& = \trace[\log(I+A/\gamma)\log(I+B/\gamma)],
\end{align*}
where we have invoked the result for the case $\gamma = 1$ above. 
\qed
\end{proof}

\begin{proof}
	[\textbf{of Theorem \ref{theorem:limiting-infinite}}]
	From Theorem \ref{theorem:Pdistance-AgammaI} and Lemmas \ref{lemma:alpha-0-logHS-1} and \ref{lemma:alpha-0-logHS-2}, we have
	\begin{align*}
	&\lim_{\alpha \approach 0}(d^{\alpha}_{\logHS}[(A+\gamma I), (B+ \gamma I)])^2
	\\
	&	 = \lim_{\alpha \approach 0}\frac{||(A+\gamma I)^{\alpha}||^2_{\eHS}-\gamma^{2\alpha}}{\alpha^2}
	 + \lim_{\alpha \approach 0}\frac{||(B+\gamma I)^{\alpha}||^2_{\eHS}- \gamma^{2\alpha}}{\alpha^2}
	\\
	& - \lim_{\alpha \approach 0}\frac{2}{\alpha^2}\trace[[(A+\gamma I)^{\alpha}(B+\gamma I)^{2\alpha}(A+\gamma I)^{\alpha}]^{1/2}-\gamma^{\alpha}(A+\gamma I)^{\alpha} - \gamma^{\alpha}(B+\gamma I)^{\alpha}+ \gamma^{2\alpha}I]
	\\
	& = ||\log(I+A/\gamma)||^2_{\HS} + ||\log(I+B/\gamma)||^2_{\HS} - 2\la \log(I+A/\gamma), \log(I+B/\gamma)\ra_{\HS}
	\\
	& = ||\log(I+A/\gamma) - \log(I+B/\gamma)||^2_{\HS} = ||\log(A+\gamma I) - \log(B+\gamma I)||^2_{\HS}.
	\end{align*}	
	This completes the proof. \qed
\end{proof}

\begin{proof}
[\textbf{of Theorem \ref{theorem:metric-family-infinite-HS}}]
For $\alpha = 0$, $d^{0}_{\logHS} = ||\;||_{\eHS}$ is a metric on $\PC_2(\H)(\gamma)$ since the Log-Hilbert-Schmidt distance is a metric
\cite{MinhSB:NIPS2014}. Consider the case $\alpha \neq 0$. It suffices to prove for the case $\gamma = 1$. 
Since we already have positivity and symmetry, it remains to prove the triangle inequality.

Let $(I+A), (I+B) \in \PC_2(\H)$, $(I+U), (I+V) \in \Ubb(\H) \cap \eHS(\H)$, then,
as in the proof of Proposition \ref{proposition:limit-UV-same},
\begin{align*}
&||(I+A)^{\alpha}(I+U) - (I+B)^{\alpha}(I+V)||^2_{\HS}
\\
& = \trace[(I+A)^{2\alpha} + (I+B)^{2\alpha} - (I+V)(I+U^{*})(I+A)^{\alpha}(I+B)^{\alpha} 
\\
&\quad - (I+U)(I+V^{*})(I+B)^{\alpha}(I+A)^{\alpha}]
\\
& = ||(I+A)^{\alpha} - (I+B)^{\alpha}(I+V)(I+U^{*})||^2_{\HS}.
\end{align*}
Thus for any other operator $(I+C) \in \PC_2(\H)$,
\begin{align*}
&||(I+A)^{\alpha} - (I+B)^{\alpha}(I+U)||_{\HS} 
\\
&\leq ||(I+A)^{\alpha} - (I+C)^{\alpha}(I+V)||_{\HS} + ||(I+C)^{\alpha}(I+V) - (I+B)^{\alpha}(I+U)||_{\HS}
\\
&  = ||(I+A)^{\alpha} - (I+C)^{\alpha}(I+V)||_{\HS} + ||(I+C)^{\alpha} - (I+B)^{\alpha}(I+U)(I+V^{*})||_{\HS}
\end{align*}
Taking the infimum over all $(I+U), (I+V) \in \Ubb(\H) \cap \eHS(\H)$, we obtain
\begin{align*}
d^{\alpha}_{\proHS}[(I+A), (I+B)] \leq d^{\alpha}_{\proHS}[(I+A), (I+C)] + d^{\alpha}_{\proHS}[(I+C), (I+B)].
\end{align*}
This completes the proof.
\qed
\end{proof}

\subsection{Proofs for the RKHS setting}

\begin{proof}
	[\textbf{of Proposition \ref{proposition:dproHS-H1H2}}]
	By Theorem \ref{theorem:Pdistance-AI-HS}, we have
	\begin{align*}
	&\alpha^2(d^{\alpha}_{\proHS}[(I+AA^{*}), (I+BB^{*})])^2 =
	\trace[(I+AA^{*})^{2\alpha} - I] + \trace[(I+BB^{*})^{2\alpha} - I]
	\\
	& - 2\trace[[(I+AA^{*})^{\alpha}(I+BB^{*})^{2\alpha}(I+AA^{*})^{\alpha}]^{1/2}-I] 
	\\
	& = \trace[(I+AA^{*})^{2\alpha} - I] + \trace[(I+BB^{*})^{2\alpha} - I]
	- 2\trace[[(I+AA^{*})^{2\alpha}(I+BB^{*})^{2\alpha}]^{1/2}-I]. 
	\end{align*}
	Since the non-zero eigenvalues of $AA^{*}$ and $A^{*}A$ are the same, the non-zero eigenvalues
	of $(I+AA^{*})^{2\alpha} - I$ and $(I+A^{*}A)^{2\alpha}- I$ are also the same and
	\begin{align*}
	&\trace[(I_{\H_2}+AA^{*})^{2\alpha} - I_{\H_2}] = \trace[(I_{\H_1}+A^{*}A)^{2\alpha} - I_{\H_1}],
	\\
	&\trace[(I_{\H_2}+BB^{*})^{2\alpha} - I_{\H_2}] = \trace[(I_{\H_1}+B^{*}B)^{2\alpha} - I_{\H_1}].
	\end{align*}
	We make use of the following properties 
	\begin{align}
	Ah_{\alpha}(A) = h_{\alpha}(A)A = (I+A)^{\alpha} - I, \; \text{(Lemma 10, \cite{Minh:LogDetIII2018})},
	\\
	(AA^{*} + I_{\H_2})^{\alpha} - I_{\H_2} = Ah_{\alpha}(A^{*}A)A^{*}, \; \text{(Corollary 2, \cite{Minh:LogDetIII2018})}.
	\end{align}
	It follows that
	\begin{align*}
	&(I_{\H_2} + AA^{*})^{2\alpha} = I_{\H_2} + Ah_{2\alpha}(A^{*}A)A^{*},
	\\
	&(I_{\H_2} + BB^{*})^{2\alpha} = I_{\H_2} + Bh_{2\alpha}(B^{*}B)B^{*},
	\end{align*}
	where $h_{2\alpha}(A^{*}A): \H_1 \mapto \H_1$ and $h_{2\alpha}(B^{*}B): \H_1 \mapto \H_1$.
	It follows that
	\begin{align*}
	&[(I_{\H_2} + AA^{*})^{2\alpha}(I_{\H_2} + BB^{*})^{2\alpha}]^{1/2} - I_{\H_2}
	\\
	& = [(I_{\H_2} + Ah_{2\alpha}(A^{*}A)A^{*})(I_{\H_2} + Bh_{2\alpha}(B^{*}B)B^{*})]^{1/2} - I_{\H_2}
	\\
	& = [I_{\H_2} + Ah_{2\alpha}(A^{*}A)A^{*} + Bh_{2\alpha}(B^{*}B)B^{*} + Ah_{2\alpha}(A^{*}A)A^{*}Bh_{2\alpha}(B^{*}B)B^{*}]^{1/2} - I_{\H_2}.
	\end{align*}
	Consider the operators
	\begin{align*}
	& 
	\begin{pmatrix}
	Ah_{2\alpha}(A^{*}A) & Bh_{2\alpha}(B^{*}B) &  Ah_{2\alpha}(A^{*}A)A^{*}Bh_{2\alpha}(B^{*}B)
	\end{pmatrix}
	: \H_1^3 \mapto \H_2,
	\\
	& \begin{pmatrix}
	A^{*}
	\\
	B^{*}
	\\
	B^{*}
	\end{pmatrix}: \H_2 \mapto \H_1^3.
	\end{align*}
	Then the nonzero eigenvalues of
	\begin{align*}
	&Ah_{2\alpha}(A^{*}A)A^{*} + Bh_{2\alpha}(B^{*}B)B^{*} + Ah_{2\alpha}(A^{*}A)A^{*}Bh_{2\alpha}(B^{*}B)B^{*}
	\\
	&= 
	\begin{pmatrix}
	Ah_{2\alpha}(A^{*}A) & Bh_{2\alpha}(B^{*}B) &  Ah_{2\alpha}(A^{*}A)A^{*}Bh_{2\alpha}(B^{*}B)
	\end{pmatrix}
	\begin{pmatrix}
	A^{*}
	\\
	B^{*}
	\\
	B^{*}
	\end{pmatrix}:
	\H_2 \mapto \H_2
	\end{align*}
	are the same as those of the operator
	\begin{align*}
	& \begin{pmatrix}
	A^{*}
	\\
	B^{*}
	\\
	B^{*}
	\end{pmatrix}
	\begin{pmatrix}
	Ah_{2\alpha}(A^{*}A) & Bh_{2\alpha}(B^{*}B) &  Ah_{2\alpha}(A^{*}A)A^{*}Bh_{2\alpha}(B^{*}B)
	\end{pmatrix}: \H_1^3 \mapto \H_1^3
	\\
	& 
	=
	\begin{pmatrix}
	A^{*}Ah_{2\alpha}(A^{*}A) & A^{*} Bh_{2\alpha}(B^{*}B) & A^{*}Ah_{2\alpha}(A^{*}A)A^{*}Bh_{2\alpha}(B^{*}B)
	\\
	B^{*}Ah_{2\alpha}(A^{*}A) & B^{*} Bh_{2\alpha}(B^{*}B) & B^{*}Ah_{2\alpha}(A^{*}A)A^{*}Bh_{2\alpha}(B^{*}B)
	\\
	B^{*}Ah_{2\alpha}(A^{*}A) & B^{*}Bh_{2\alpha}(B^{*}B) & B^{*}Ah_{2\alpha}(A^{*}A)A^{*}Bh_{2\alpha}(B^{*}B)
	\end{pmatrix}
	\\
	& =
	\begin{pmatrix}
	[(I_{\H_1} + A^{*}A)^{2\alpha} - I_{\H_1}] & A^{*} Bh_{2\alpha}(B^{*}B) & [(I_{\H_1} + A^{*}A)^{2\alpha} - I_{\H_1}]A^{*}Bh_{2\alpha}(B^{*}B)
	\\
	B^{*}Ah_{2\alpha}(A^{*}A) & [(I_{\H_1} + B^{*}B)^{2\alpha} - I_{\H_1}] & B^{*}Ah_{2\alpha}(A^{*}A)A^{*}Bh_{2\alpha}(B^{*}B)
	\\
	B^{*}Ah_{2\alpha}(A^{*}A) &  [(I_{\H_1} + B^{*}B)^{2\alpha} - I_{\H_1}] & B^{*}Ah_{2\alpha}(A^{*}A)A^{*}Bh_{2\alpha}(B^{*}B)
	\end{pmatrix}
	\end{align*}
	We finally obtain
	\begin{align*}
	&\trace[[(I_{\H_2} + AA^{*})^{2\alpha}(I_{\H_2} + BB^{*})^{2\alpha}]^{1/2} - I_{\H_2}]
	\\
	&= 
	\trace\left[\left[I_{\H_1^3} 
	+
	\begin{pmatrix}
	C_{11} & C_{12} & C_{13}
	\\
	C_{21} & C_{22} & C_{23}
	\\
	C_{21} & C_{22} & C_{23}
	\end{pmatrix}
	\right]^{1/2} - I_{\H_1^3}\right],
	\end{align*}
	where
	\begin{align*}
	&C_{11} = [(I_{\H_1} + A^{*}A)^{2\alpha} - I_{\H_1}], \; \; C_{12} = A^{*} Bh_{2\alpha}(B^{*}B),
	\\
	&C_{13} =[(I_{\H_1} + A^{*}A)^{2\alpha} - I_{\H_1}]A^{*}Bh_{2\alpha}(B^{*}B),
	\\
	&C_{21} =B^{*}Ah_{2\alpha}(A^{*}A), \; \; C_{22} =[(I_{\H_1} + B^{*}B)^{2\alpha} - I_{\H_1}],
	\\
	&C_{23} = B^{*}Ah_{2\alpha}(A^{*}A)A^{*}Bh_{2\alpha}(B^{*}B).
	\end{align*}
	This completes the proof. \qed
\end{proof}

\begin{proof}
	[\textbf{of Corollary \ref{corollary:dproHS-H1H2-gamma}}]	
	By definition,
	\begin{align*}
	&\alpha^2(d^{\alpha}_{\proHS}[(AA^{*}+\gamma I_{\H_2}), (BB^{*}+\gamma I_{\H_2})])^2
	\\
	& = \alpha^2\gamma^{2\alpha}(d^{\alpha}_{\proHS}[(AA^{*}/\gamma + I_{\H_2}), (BB^{*}/\gamma + I_{\H_2})])^2.
	\end{align*}
	The desired result then follows from Proposition \ref{proposition:dproHS-H1H2}. \qed
\end{proof}

\begin{lemma}
	\label{lemma:h-alpha-gamma-0}
Let $\H_1$ be a separable Hilbert space and $E:\H_1 \mapto \H_1$ be a positive compact operator on $\H_1$. Let $h_{\alpha}(E)$ be as defined in Eq.(\ref{equation:h-alpha}).
Let $\alpha \geq 1/2$ be fixed. Then in the operator norm on $\H_1$,
\begin{align}
&\lim_{\gamma \approach 0^{+}}\gamma^{2\alpha-1}h_{2\alpha}\left(\frac{E}{\gamma}\right) = E^{2\alpha-1},
\end{align}
where the right hand side is a positive bounded operator on $\H_1$. If $\alpha > 1/2$, 
then $E^{2\alpha-1}$ is compact.
	\end{lemma}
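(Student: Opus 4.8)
The plan is to reduce the operator-norm convergence to a scalar estimate on the eigenvalues of $E$, exploiting that both $\gamma^{2\alpha-1}h_{2\alpha}(E/\gamma)$ and $E^{2\alpha-1}$ are functions of $E$, diagonal in the common eigenbasis $\{\phi_k(E)\}_{k=1}^{N_E}$ and vanishing on $\ker E$. First I would record the explicit form
\[
\gamma^{2\alpha-1}h_{2\alpha}(E/\gamma)=\sum_{k=1}^{N_E}\frac{(\gamma+\lambda_k(E))^{2\alpha}-\gamma^{2\alpha}}{\lambda_k(E)}\,\phi_k(E)\otimes\phi_k(E),
\]
obtained from the definition (\ref{equation:h-alpha}) by substituting $\lambda_k(E/\gamma)=\lambda_k(E)/\gamma$ and using the identity $\gamma^{2\alpha}(1+\lambda_k(E)/\gamma)^{2\alpha}=(\gamma+\lambda_k(E))^{2\alpha}$. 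Since $E^{2\alpha-1}=\sum_k\lambda_k(E)^{2\alpha-1}\,\phi_k(E)\otimes\phi_k(E)$ (with the convention $0^0:=0$, so that for $\alpha=1/2$ this operator is the orthogonal projection onto $\overline{\mathrm{ran}\,E}$), the difference of the two operators is again diagonal with respect to the spectral decomposition of $E$ and zero on $\ker E$, so its operator norm equals $\sup_k|g_\gamma(\lambda_k(E))|$, where for $\lambda>0$
\[
g_\gamma(\lambda)=\frac{(\gamma+\lambda)^{2\alpha}-\gamma^{2\alpha}}{\lambda}-\lambda^{2\alpha-1}=\frac{(\gamma+\lambda)^{2\alpha}-\gamma^{2\alpha}-\lambda^{2\alpha}}{\lambda}.
\]

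The heart of the argument is a uniform bound on $g_\gamma$ over the spectral interval. Writing $M=\|E\|$, every $\lambda_k(E)$ lies in $(0,M]$, so it suffices to show $\sup_{0<\lambda\le M}|g_\gamma(\lambda)|\to 0$ as $\gamma\to 0^+$. Since $2\alpha\ge 1$, the map $s\mapsto s^{2\alpha}$ is $C^1$ on $[0,\infty)$, so by the fundamental theorem of calculus
\[
g_\gamma(\lambda)=\frac{2\alpha}{\lambda}\int_0^\lambda\bigl[(\gamma+t)^{2\alpha-1}-t^{2\alpha-1}\bigr]\,dt .
\]
Because $2\alpha-1\ge 0$, the integrand is nonnegative, and for $\gamma\in(0,1]$ and $t\in[0,M]$ both $t$ and $t+\gamma$ lie in the compact interval $[0,M+1]$, on which $s\mapsto s^{2\alpha-1}$ is uniformly continuous; hence the integrand is bounded by $\omega(\gamma):=\sup\{|u^{2\alpha-1}-v^{2\alpha-1}|:u,v\in[0,M+1],\,|u-v|\le\gamma\}$, with $\omega(\gamma)\to 0$ as $\gamma\to 0^+$. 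This gives $|g_\gamma(\lambda)|\le 2\alpha\,\omega(\gamma)$ for every $\lambda\in(0,M]$, and therefore $\|\gamma^{2\alpha-1}h_{2\alpha}(E/\gamma)-E^{2\alpha-1}\|\le 2\alpha\,\omega(\gamma)\to 0$.

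It remains to verify the structural claims about $E^{2\alpha-1}$: it is positive by construction, and bounded with $\|E^{2\alpha-1}\|\le M^{2\alpha-1}$ since $s\mapsto s^{2\alpha-1}$ is nondecreasing and $0<\lambda_k(E)\le M$; moreover, when $\alpha>1/2$ the coefficients $\lambda_k(E)^{2\alpha-1}$ tend to $0$ as $k\to\infty$ (as $\lambda_k(E)\to 0$), so $E^{2\alpha-1}$ is a norm limit of finite-rank operators and hence compact. The step I expect to be the main obstacle is the uniform estimate near $\lambda=0$: crude bounds such as $|g_\gamma(\lambda)|\le 2\alpha\gamma\,\lambda^{2\alpha-2}$ blow up as $\lambda\to 0$ when $\alpha<1$, and it is precisely the averaging over $[0,\lambda]$ in the integral representation, together with the uniform continuity of a single scalar power function, that controls this region.
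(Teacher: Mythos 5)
Your proof is correct and follows essentially the same route as the paper: both diagonalize in the eigenbasis of $E$, rewrite $\gamma^{2\alpha-1}h_{2\alpha}(E/\gamma)$ as $\sum_k \frac{(\gamma+\lambda_k(E))^{2\alpha}-\gamma^{2\alpha}}{\lambda_k(E)}\,\phi_k(E)\otimes\phi_k(E)$, and pass to the limit. The paper simply asserts the operator-norm convergence at that point, whereas you supply the missing (and genuinely needed) uniform-in-$\lambda$ estimate via the integral representation of $g_\gamma$ and the uniform continuity of $s\mapsto s^{2\alpha-1}$ on a compact interval; this is a strictly more complete version of the same argument.
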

\begin{proof}
For any $\gamma > 0$, by definition of $h_{\alpha}(E)$ as in Eq.(\ref{equation:h-alpha}), we have
\begin{align*}
&\gamma^{2\alpha-1}h_{2\alpha}(\frac{E}{\gamma}) =\gamma^{2\alpha-1}\sum_{k=1}^{N_E}\frac{[1+(\lambda_k(E)/\gamma)]^{2\alpha}-1}{(\lambda_k(E)/\gamma)}\phi_k(E) \otimes \phi_k(E)
\\
& = \sum_{k=1}^{N_E}\frac{(\gamma + \lambda_k(E))^{2\alpha} - \gamma^{2\alpha}}{\lambda_k(E)}\phi_k(E) \otimes \phi_k(E).
\end{align*}
Thus for $\alpha \geq 1/2$, we have in the operator norm on $\H_1$,
\begin{align*}
\lim_{\gamma \approach 0}\gamma^{2\alpha-1}h_{2\alpha}(\frac{E}{\gamma}) = \sum_{k=1}^{N_E}\lambda_k^{2\alpha-1}(E)\phi_k(E) \otimes \phi_k(E) = E^{2\alpha-1}.
\end{align*}
The right hand side is clearly a positive bounded operator for $\alpha \geq 1/2$ and compact when $\alpha >1/2$.
\qed
\end{proof}

The following is straightforward to verify.
\begin{lemma}
\label{lemma:eigen-block-matrix}
Let $A,B$ be real $n \times n$ matrix.
Consider the $3n \times 3n$ block matrix
\begin{align}
C = \begin{pmatrix}
0 & 0 & A
\\
0& 0 & 0
\\
0 & 0 & B
\end{pmatrix}.
\end{align}
Then the nonzero eigenvalues of $C$ and $B$ are the same.
\end{lemma}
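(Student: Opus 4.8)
The plan is to read off the characteristic polynomial of $C$ from its block structure. Since all the sub-diagonal blocks of $C$ vanish, the matrix
\[
\lambda I_{3n} - C = \begin{pmatrix} \lambda I_n & 0 & -A \\ 0 & \lambda I_n & 0 \\ 0 & 0 & \lambda I_n - B \end{pmatrix}
\]
is block upper triangular, so its determinant is the product of the determinants of the diagonal blocks:
\[
\det(\lambda I_{3n} - C) = \det(\lambda I_n)\,\det(\lambda I_n)\,\det(\lambda I_n - B) = \lambda^{2n}\,\det(\lambda I_n - B).
\]
Thus the characteristic polynomial of $C$ equals $\lambda^{2n}$ times that of $B$. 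Comparing roots with their algebraic multiplicities, the spectrum of $C$ is that of $B$ together with the extra eigenvalue $0$ of multiplicity $2n$; deleting the zero eigenvalues from both lists, $C$ and $B$ have exactly the same nonzero eigenvalues with the same multiplicities, which is the assertion.

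There is essentially no obstacle here; the only point worth recording is that ``same nonzero eigenvalues'' is understood counting algebraic multiplicity, which is immediate from the displayed factorization. If an eigenvector-level statement is preferred, one can argue directly: for $\lambda \neq 0$, an equation $C v = \lambda v$ with $v = (v_1, v_2, v_3)$ forces $v_2 = 0$, $v_1 = \lambda^{-1} A v_3$, and $B v_3 = \lambda v_3$ with $v_3 \neq 0$; conversely any eigenpair $(\lambda, w)$ of $B$ with $\lambda \neq 0$ lifts to the eigenpair $(\lambda, (\lambda^{-1} A w, 0, w))$ of $C$. Either route completes the proof in a few lines.
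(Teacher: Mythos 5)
Your proof is correct: the block upper triangular structure gives $\det(\lambda I_{3n}-C)=\lambda^{2n}\det(\lambda I_n - B)$ immediately, and the paper itself offers no proof (it only remarks that the lemma is straightforward to verify), so your argument supplies exactly the routine verification the paper leaves implicit. The supplementary eigenvector-level argument is also correct and adds nothing problematic.
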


\begin{proof}
	[\textbf{of Corollary \ref{corollary:dproHS-H1H2-gamma-0}}]
By Lemma \ref{lemma:h-alpha-gamma-0}, we have
\begin{align*}
&\lim_{\gamma \approach 0}\gamma^{2\alpha-1}h_{2\alpha}(\frac{A^{*}A}{\gamma}) = (A^{*}A)^{2\alpha-1}, \lim_{\gamma \approach 0}\gamma^{2\alpha-1}h_{2\alpha}(\frac{B^{*}B}{\gamma}) = (B^{*}B)^{2\alpha-1}.
\end{align*}
Since $A^{*}B, B^{*}A$ are trace class operators on $\H_1$ by assumption, using the expression in Corollary \ref{corollary:dproHS-H1H2-gamma}, we have
\begin{align*}
&\lim_{\gamma \approach 0}
\alpha^2(d^{\alpha}_{\proHS}[(AA^{*} + \gamma I_{\H_2}), (BB^{*} + \gamma I_{\H_2})])^2
\nonumber
\\
& = \trace[(A^{*}A)^{2\alpha}] + \trace[(B^{*}B)^{2\alpha}]
- 2\trace
\begin{pmatrix}
0 & 0 & (A^{*}A)^{2\alpha}A^{*}B(B^{*}B)^{2\alpha-1}
\\
0 & 0 & 0
\\
0 & 0 & B^{*}A(A^{*}A)^{2\alpha-1}A^{*}B(B^{*B})^{2\alpha-1}
\end{pmatrix}^{1/2}
\\
& = \trace[(A^{*}A)^{2\alpha} + (B^{*}B)^{2\alpha} - 2(B^{*}A(A^{*}A)^{2\alpha-1}A^{*}B(B^{*}B)^{2\alpha-1})^{1/2}],
\end{align*}
where the last equality follows from Lemma \ref{lemma:eigen-block-matrix}.

For $\alpha = 1/2$, we can also proceed directly as follows. By Corollary 2 in \cite{Minh:LogDetIII2018},
	\begin{align}
	\label{equation:h1}
	Ah_1(A^{*}A) = A, \;\; h_1(A^{*}A)A^{*} = A^{*}.
	\end{align}
	It follows that
	\begin{align*}
	&C_{11} = \frac{1}{\gamma}A^{*}A, \;\; C_{12} = \frac{1}{\gamma}A^{*}Bh_1(B^{*}B/\gamma) = \frac{1}{\gamma} A^{*}B,\;\;
	\\
	&C_{13} = \frac{1}{\gamma}(A^{*}A/\gamma) A^{*}Bh_1(B^{*}B/\gamma) = \frac{1}{\gamma^2}A^{*}AA^{*}B,
	\\
	&C_{21} = \frac{1}{\gamma}B^{*}Ah_1(A^{*}A/\gamma) = \frac{1}{\gamma}B^{*}A, \;\; C_{22} = \frac{1}{\gamma}B^{*}B,\;\;
	\\
	&C_{23} = \frac{1}{\gamma^2}B^{*}Ah_1(A^{*}A/\gamma)A^{*}Bh_1(B^{*}B/\gamma) = \frac{1}{\gamma^2}B^{*}AA^{*}B.
	\end{align*}
	Using the expression in Corollary \ref{corollary:dproHS-H1H2-gamma}, we have
	\begin{align*}
	&\lim_{\gamma \approach 0}
	\frac{1}{4}(d^{1/2}_{\proHS}[(AA^{*} + \gamma I_{\H_2}), (BB^{*} + \gamma I_{\H_2})])^2
	\nonumber
	\\
	& = \trace[(A^{*}A] + \trace[(B^{*}B]
	- 2\trace
	\begin{pmatrix}
	0 & 0 & A^{*}AA^{*}B
	\\
	0 & 0 & 0
	\\
	0 & 0 & B^{*}AA^{*}B
	\end{pmatrix}^{1/2}
	\\
	& = \trace[(A^{*}A) + (B^{*}B) - 2(B^{*}AA^{*}B)^{1/2}],
	\end{align*}
	where the last equality follows from Lemma \ref{lemma:eigen-block-matrix}.
	\qed
\end{proof}

\subsection{Proofs for the distances between Gaussian measures}

\begin{lemma}
	\label{lemma:inequality-quadratic}
	For any real numbers $a,b,c,d$,
	\begin{align}
	\sqrt{(a+b)^2 + (c+d)^2} \leq \sqrt{a^2 + c^2} + \sqrt{b^2 + d^2}.
	\end{align}
\end{lemma}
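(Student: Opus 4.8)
The plan is to recognize this as nothing more than the triangle inequality for the Euclidean norm on $\R^2$, applied to the two vectors $u = (a,c)$ and $v = (b,d)$: the left-hand side is precisely $\|u+v\|_2$ and the right-hand side is $\|u\|_2 + \|v\|_2$. To keep the argument self-contained I would prove it directly by squaring, which is permissible since both sides are manifestly nonnegative.

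First I would observe that the asserted inequality is equivalent, after squaring, to
\[
(a+b)^2 + (c+d)^2 \leq a^2 + c^2 + b^2 + d^2 + 2\sqrt{(a^2+c^2)(b^2+d^2)} .
\]
Expanding the left-hand side and cancelling the common terms $a^2+b^2+c^2+d^2$, this reduces to the single inequality
\[
ab + cd \leq \sqrt{(a^2+c^2)(b^2+d^2)} .
\]
If $ab+cd \leq 0$ this holds trivially; otherwise one may square once more, and the claim follows from the two-dimensional Lagrange identity
\[
(a^2+c^2)(b^2+d^2) - (ab+cd)^2 = (ad-bc)^2 \geq 0 .
\]
This establishes the lemma, with equality exactly when $ad = bc$ and $ab + cd \geq 0$ (i.e. when $(a,c)$ and $(b,d)$ are nonnegatively proportional), a fact not needed here but essentially free.

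There is no genuine obstacle in this proof; the only points deserving a word of care are the nonnegativity checks justifying each squaring step. For completeness I would remark how the lemma is used: in Lemma \ref{lemma:metric-quadratic-Gaussian} it supplies the triangle inequality for $D[\Ncal(m_1,C_1)\|\Ncal(m_2,C_2)] = \sqrt{d_{\mean}^2(m_1,m_2) + d_{\cov}^2(C_1,C_2)}$, by applying it with $a = d_{\mean}(m_1,m_2)$, $b = d_{\mean}(m_2,m_3)$, $c = d_{\cov}(C_1,C_2)$, $d = d_{\cov}(C_2,C_3)$, combined with the triangle inequalities for $d_{\mean}$ and $d_{\cov}$ and the monotonicity of $t \mapsto \sqrt{t}$ on $[0,\infty)$.
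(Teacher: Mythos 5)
Your proof is correct and follows essentially the same route as the paper's: square both sides, reduce to $ab+cd \leq \sqrt{(a^2+c^2)(b^2+d^2)}$, dispose of the case $ab+cd\leq 0$, and finish by squaring again to reach $(ad-bc)^2 \geq 0$. The framing as the triangle inequality for the Euclidean norm on $\R^2$ and the equality-case remark are pleasant additions but do not change the argument.
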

\begin{proof}
	We have
	\begin{align*}
	&\sqrt{(a+b)^2 + (c+d)^2} \leq \sqrt{a^2 + c^2} + \sqrt{b^2 + d^2} 
	\\
	&\equivalent 
	(a+b)^2 + (c+d)^2 \leq a^2 + c^2 + b^2 + d^2 + 2\sqrt{(a^2 + c^2)(b^2 + d^2)}
	\\
	& \equivalent (ab +cd) \leq \sqrt{(a^2 + c^2)(b^2 + d^2)}
	\end{align*}
	which is obvious if $(ab+cd) \leq 0$. If $(ab + cd) > 0$, then this is equivalent to 
	$(ad-bc)^2 \geq0$, which is obvious. \qed
\end{proof}

\begin{proof}
	[\textbf{of Lemma \ref{lemma:metric-quadratic-Gaussian}}]
	It suffices to show the triangle inequality for $D$. We have
	\begin{align*}
	&D(\Ncal(m_1, C_1), \Ncal(m_3,C_3)) = \sqrt{d_{\mean}^2(m_1,m_3) + d_{\cov}^2(C_1, C_3)}
	\\
	& \leq \sqrt{[d_{\mean}(m_1,m_2) + d_{\mean}(m_2,m_3)]^2 + [d_{\cov}(C_1,C_2) + d_{\cov}(C_2,C_3)]^2 }
	\\
	& \leq \sqrt{d_{\mean}^2(m_1,m_2) + d_{\cov}^2(C_1, C_2)} + \sqrt{d_{\mean}^2(m_2,m_3) + d_{\cov}^2(C_2, C_3)}\
	\\
	& = D(\Ncal(m_1, C_1)||\Ncal(m_2,C_2)) + D(\Ncal(m_2, C_2)||\Ncal(m_3,C_3)),
	\end{align*}
	where the last inequality follows from Lemma \ref{lemma:inequality-quadratic}. \qed
\end{proof}

\begin{proof}
	[\textbf{of Lemma \ref{lemma:trace-power-1}}]
	Let $\{\lambda_k\}_{k=1}^{\infty}$ be the eigenvalues of $A$, then by the positive trace class assumption, $\lambda_k \geq 0$, $\lim_{k\approach \infty} \lambda_k = 0$, and
	$\sum_{k=1}^{\infty} \lambda_k < \infty$. Thus with $\alpha \geq 1/2$, so that $2\alpha \geq 1$, we also have
	$\sum_{k=1}^{\infty}\lambda_k^{2\alpha} < \infty$. It follows that $A^{2\alpha} \in \Sym^{+}(\H) \cap \Tr(\H)$ and
	$A^{\alpha} \in \Sym^{+}(\H) \cap \HS(\H)$. \qed
\end{proof}

\begin{proof}
	[\textbf{of Lemma \ref{lemma:trace-power-2}}]
	By Lemma \ref{lemma:trace-power-2}, we have $A^{\alpha}, B^{\alpha} \in \Sym^{+}(\H) \cap \HS(\H)$, 
	so that $A^{\alpha}B^{\alpha} \in \Tr(\H)$. By definition of the trace norm, we then have
	\begin{align*}
	\trace[(A^{\alpha}B^{2\alpha}A^{\alpha})]^{1/2} = \trace[(A^{\alpha}B^{\alpha})(A^{\alpha}B^{\alpha})^{*}]^{1/2} = ||(A^{\alpha}B^{\alpha})||_{\tr} < \infty.
	\end{align*}
	It thus follows that $(A^{\alpha}B^{2\alpha}A^{\alpha})^{1/2} \in \Sym^{+}(\H) \cap \Tr(\H)$. \qed
	\end{proof}

\begin{proof}
	[\textbf{of Theorem \ref{theorem:metric-Gaussian-Hilbert-zeromean}}]
	 We observe that for a fixed $\gamma > 0$,
	$D^{\alpha}_{\proHS}[\Ncal(0,C_1)||\Ncal(0,C_2)] = 0 \equivalent d^{\alpha}_{\proHS}[(C_1 + \gamma I), (C_2 + \gamma I)] = 0 \equivalent
	C_1 + \gamma I = C_2 + \gamma I \equivalent C_1 = C_2$. The theorem then follows from the metric properties of $d^{\alpha}_{\proHS}$.
	\qed
	\end{proof}

\begin{proof}
	[\textbf{of Theorem \ref{theorem:metric-Gaussian-Rn}}]
	This follows from Theorem 
	\ref{theorem:metric-family-finite} and Lemma
	\ref{lemma:metric-quadratic-Gaussian}.
	\qed
\end{proof}

\begin{proof}
	[\textbf{of Theorem \ref{theorem:metric-Gaussian-Hilbert}}]
	This follows from Theorem 
	\ref{theorem:metric-Gaussian-Hilbert-zeromean} and Lemma
	\ref{lemma:metric-quadratic-Gaussian}.
	\qed
\end{proof}

\begin{proof}
	[\textbf{of Theorem \ref{theorem:distance-Gaussian-RKHS-alpha}}]
	By definition of $\mu_{\Phi(\Xbf)}$, $\mu_{\Phi(\Ybf)}$,
	\begin{align*}
	&||\mu_{\Phi(\Xbf)} - \mu_{\Phi(\Ybf)}||^2_{\H_K} = \frac{1}{m^2}||\Phi(\Xbf)\1_m - \Phi(\Ybf)\1_m||^2_{\H_K} 
	\\
	&= \frac{1}{m^2}
	\left\la[\sum_{i=1}^m\Phi(x_i) - \sum_{j=1}^m\Phi(y_j)], [\sum_{i=1}^m\Phi(x_i) - \sum_{j=1}^m\Phi(y_j)\ra_{\H_K}]\right\ra_{\H_K}
	\\
	& = \frac{1}{m^2}\left[\sum_{i,j=1}^m[K(x_i,x_j) + K(y_i,y_j) - 2K(x_i,y_j)]\right]
	\\
	&= \1_m^T(K[\Xbf] + K[\Ybf] - 2K[\Xbf, \Ybf])\1_m.
	\end{align*}
	The desired result is then obtained by combining Theorem \ref{theorem:metric-Gaussian-Hilbert}
	with Theorem \ref{theorem:proHS-RKHS} and the above expression. \qed
\end{proof}

\begin{proof}
	[\textbf{of Theorem \ref{theorem:distance-Gaussian-RKHS-alpha-more-than1/2}}]
	This is similar to the proof of Theorem \ref{theorem:distance-Gaussian-RKHS-alpha}, except that we invoke Theorem 
	\ref{theorem:metric-Gaussian-Hilbert} for the case $\alpha \geq 1/2$.
	\qed
\end{proof}
\end{document}